	\renewcommand{\t}{\tau}
	\newcommand{\Real}{\mathbb{R}}
	\newcommand{\Complex}{\mathbb{C}}
	\newcommand{\Integer}{\mathbb{Z}}
	\newcommand{\CJ}{\mathcal{CJ}}
	\newcommand{\dM}{\partial M}
	\newcommand{\DT}[1]{#1 \dots #1}
	\newcommand{\bydef}{\stackrel{\mbox{\tiny def}}{=}}
	\def \lmod#1\rmod {\left|\smash{#1}\right|\vphantom{#1}}
	\newcommand{\CSum}[2]{\operatorname{\mathcal C}\nolimits_{#1 \mid #2}}
	\newcommand{\pder}[2]{\frac{\partial #1}{\partial #2}}
	\newcommand{\pdertwo}[3]{\frac{\partial^2 #1}{\partial #2 \partial #3}}
	\newcommand{\name}[1]{\operatorname{\mathrm{#1}}}
	\def \lmod#1\rmod {\left|\smash{#1}\right|\vphantom{#1}}
	\def \Fock {\Lambda^{\infty/2}}
	\def \HInteger {\Integer + \frac12}
	\theoremstyle{plain}
	\newtheorem{theorem}{Theorem}
	\newtheorem{lemma}[theorem]{Lemma}
	\newtheorem{proposition}[theorem]{Proposition}
	\newtheorem{corollary}[theorem]{Corollary}
	\theoremstyle{definition}
	\def \definitionName {Definition}
	\newtheorem{definition}[theorem]{\definitionName}
	\theoremstyle{remark}
	\def \remarkName {Remark}
	\newtheorem{remark}[theorem]{\remarkName}
	\newtheorem{example}[theorem]{Example}
	\numberwithin{theorem}{section}
	\numberwithin{equation}{section}
	\author{Rapha\"el Fesler}
	\address{Higher School of Economics, Moscow, Russia}
	\email{raphael.fesler@gmail.com}
	\def \Norm {\name{Norm}}
	\title{Hurwitz numbers for reflection groups $B$ and $D$}
\begin{document}
		
		\begin{abstract}
			We are building a theory of simple Hurwitz numbers for the
			reflection groups B and D parallel to the classical theory for the
			symmetric group. We also study analogs of the cut-and-join
			operators.  An algebraic description of Hurwitz numbers and an
			explicit formula for them in terms of Schur polynomials are
			provided. We also relate Hurwitz numbers for B and D to ribbon
			decomposition of surfaces with boundary --- a similar result for the
			symmetric group was proved earlier by Yu.Burman and the author.
			Finally, the generating function of B-Hurwitz numbers is shown to
			give rise to two independent $\tau$-function of the KP hierarchy.
		\end{abstract}
		
		\maketitle
		
		\setcounter{tocdepth}{3}
		\tableofcontents
		
		\section*{Introduction}
		
		Hurwitz numbers are a classical topic in combinatorics and algebraic
		geometry; they date back to the work by Adolf Hurwtiz \cite{AdHurwitz}
		(1890). A classical definition of the Hurwitz number $h_{m,\lambda}$
		where $m$ is a non-negative integer and $\lambda=(\lambda_1 \DT\ge
		\lambda_s)$ is a partition, is: $h_{m,\lambda}$ is the number of
		sequences of $m$ transpositions $(\sigma_1 \DT, \sigma_m)$ such that
		their product $\sigma_1 \DT\circ \sigma_m$ belongs to a conjugacy
		class $C_\lambda \in S_n$ in the symmetric group $S_n$. By a classical
		result \cite{GouldJacks}, generating function of the Hurwitz numbers
		satisfies a second order parabolic PDE called the cut-and-join
		equation. Its right-hand side has Schur polynomials as eigenvectors,
		so the Hurwitz numbers can be expressed via them
		\cite{LandoKazarian}. The result by A.\,Okounkov about Toda lattice
		\cite{Okounkov} implies that the same generating function is a
		solution of the KP hierarchy (the proof uses tools coming from
		mathematical physics, namely the boson-fermion
		correspondence). Finally, in a recent work \cite{TwistedHurwitz} by
		Yu.Burman and the author Hurwitz numbers are shown to solve a problem
		in low-dimensional topology: they are the numbers of ways to glue $m$
		ribbons to a collection of $n$ disks so as to obtain a surface with a
		prescribed structure of the boundary.
		
		In this paper we show that there is a parallel theory of Hurwitz
		numbers for the reflection groups of series B and D. Section
		\ref{Sec:BD} is the starting point where we recall the classical
		embedding of $B_n$ and $D_n$ into the symmetric group $S_{2n}$; the
		reflections are mapped to transpositions and products of pairs of
		transpositions commuting with the involution $\t = (1,n+1) (2,n+2)
		\dots (n,2n)$ (for the group $D_n$, only pairs of transpositions are
		used). Also we recall a description of the conjugacy classes in these
		groups, taken from \cite{Carter} (see Proposition \ref{Pp:ConjClass};
		the classes are generally indexed by pairs of partitions $\lambda$ and
		$\mu$ such that $\lmod \lambda\rmod + \lmod \mu\rmod = n$).
		
		Section \ref{Sec:HurwNum} gives the actual definition of Hurwitz
		numbers for the groups $B_n$ and $D_n$: it is similar to the classical
		one with reflections instead of transpositions. For the B series, we
		count reflections of two classes (transpositions and pairs of
		transpositions) separately.
		
		In Sections \ref{Sec:Expl} and \ref{Sec:CJD} we study how the
		multiplication by a reflection affects conjugacy classes; this gives
		us an expression for the cut-and-join operator for the groups of
		series B and D.  After a suitable change of variables we see that the
		operator obtained for the B group is actually a tensor square of the
		classical cut-and-join. The cut-and-join for the D group is a direct
		sum of the cut-and-join for B and the classical cut-and-join
		rescaled. This leads to an expression of the Hurwitz numbers for B and
		D via Schur polynomials.
		
		In Section \ref{SSec:Ribbon} we provides a model for B-Hurwitz
		numbers involving ribbon decomposition of surfaces equipped with an
		involution. Finally, in Section \ref{SSec:Fermion} we apply the
		boson-fermion correspondence to the tensor square of the cut-and-join
		described above; it allows one to show that the generating function of
		the B-Hurwitz numbers is a $2$-parameter family of $\tau$-functions of
		the KP hierarchy, independently in $2$ variables, and to prove a
		similar result for D.
		
		\subsection*{Acknowledgements}
		
		The author wants to thank his advisor Yurii Burman for many 
		fruitful discussions and constant attention to this work.
		
		The research was partially funded by the HSE University Basic Research
		Program and by the International Laboratory of Cluster Geometry NRU
		HSE (RF Government grant, ag.~No.~075-15-2021-608 dated 08.06.2021)
		
		\section{Reflection groups $B_n$ and $D_n$}\label{Sec:BD}
		\subsection{Definition and embedding into $S_{2n}$}
		
		Let $B_n$ be a rank $n$ finite reflection group with the mirrors
		$x_i-x_j = 0$, $x_i+x_j = 0$ ($1 \le i < j \le n$) and $x_i = 0$ ($1
		\le i \le n$); the corresponding reflections are denoted $s_{ij}^+$,
		$s_{ij}^-$, and $\ell_i$, respectively (see \cite{Humphreys} for details
		and standard notation). The subgroup $D_n \subset B_n$ is generated by
		$s_{ij}^{\pm}$ only.
		
		Consider an involution $\t \in S_{2n}$ without fixed points. All such
		involutions are conjugate, and the results that follow do not depend
		on a particulate choice; still, fix $\t \bydef (1,n+1)(2,n+2) \dots
		(n,2n)$ (that is, $\t(i) = i+n \bmod 2n$ for all $i$).
		
		Groups $B_n$ and $D_n$ admit embedding into the permutation group
		$S_{2n}$. Namely,
		
		\begin{proposition}\label{Pp:Embed}
			There exists an embedding $\Phi: B_n \to S_{2n}$ such that
			$\Phi(s_{ij}^+) = (ij)(i+n,j+n)$, $\Phi(s_{ij}^-) = (i,j+n)(i+n,j)$
			and $\Phi(\ell_i) = (i,i+n)$. The image $\Phi(B_n)$ is the normalizer
			of $\tau$:
			\begin{equation*}
				\Phi(B_n) = \{\sigma \in S_{2n} \mid \sigma\tau = \tau\sigma\}
				\bydef \Norm(\tau),
			\end{equation*}
			and the image of the subgroup $D_n \subset B_n$ under $\Phi$ is the
			intersection of $\Norm(\tau)$ with group $S_{2n}^+  
			\subset S_{2n}$ of even permutations.  
		\end{proposition}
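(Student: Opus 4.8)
The plan is to realize $B_n$ concretely as the group of \emph{signed permutations} and to transport its natural action into $S_{2n}$, so that $\Phi$ becomes the restriction of a permutation action rather than a map defined abstractly on generators. Recall that $B_n$ acts faithfully on $\mathbb{R}^n$ with coordinates $x_1, \dots, x_n$, and hence on the set of signed basis vectors $V = \{\pm e_1, \dots, \pm e_n\}$ (faithfully, since an element fixing every $e_i$ must be the identity). First I would set up the bijection $\iota\colon V \to \{1, \dots, 2n\}$, $e_i \mapsto i$, $-e_i \mapsto i+n$, and define $\Phi(g) = \iota \circ g|_V \circ \iota^{-1}$. This is manifestly an injective homomorphism, so it remains only to identify it on generators and to pin down its image.

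For the generators I would read off the action on $V$ directly: the reflection $s_{ij}^+$ in the mirror $x_i - x_j = 0$ swaps $e_i \leftrightarrow e_j$ and $-e_i \leftrightarrow -e_j$; the reflection $s_{ij}^-$ in $x_i + x_j = 0$ sends $e_i \mapsto -e_j$ and $e_j \mapsto -e_i$; and $\ell_i$ negates $e_i$. Applying $\iota$ yields precisely $(ij)(i+n,j+n)$, $(i,j+n)(i+n,j)$, and $(i,i+n)$, as claimed. At the same time $\iota$ carries the negation map $-\mathrm{id}\colon e_i \mapsto -e_i$ to $\tau = (1,n+1)\cdots(n,2n)$.

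The image is then controlled by the observation that every signed permutation commutes with negation — the defining identity $g(-e_i) = -g(e_i)$ is exactly $g \circ (-\mathrm{id}) = (-\mathrm{id}) \circ g$ — so conjugating through $\iota$ gives $\Phi(B_n) \subseteq \Norm(\tau)$ (here the normalizer of $\langle\tau\rangle$ coincides with the centralizer of $\tau$, since $\tau$ has order two). To upgrade this to equality I would sidestep an explicit inverse construction and compare orders: $\tau$ has cycle type $2^n$ in $S_{2n}$, so by the standard centralizer formula its centralizer has order $2^n\, n! = |B_n|$; combined with injectivity of $\Phi$ and the inclusion just established, this forces $\Phi(B_n) = \Norm(\tau)$. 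I expect this counting step — or, if a hands-on argument is preferred, checking that any $\sigma$ commuting with $\tau$ permutes the two-element orbits $\{i, i+n\}$ and thereby assembles into a signed permutation with image $\sigma$ — to be the only substantive point; the rest is bookkeeping.

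Finally, for $D_n$ I would note that each generator $s_{ij}^{\pm}$ maps under $\Phi$ to a product of two transpositions, hence to an even permutation, giving $\Phi(D_n) \subseteq \Norm(\tau) \cap S_{2n}^+$. Since $\Phi(\ell_i) = (i,i+n)$ is an odd transposition lying in $\Norm(\tau)$, the composite $\mathrm{sgn}\circ\Phi\colon B_n \to \{\pm 1\}$ is surjective, so its kernel $\Phi^{-1}(\Norm(\tau) \cap S_{2n}^+)$ has index $2$ in $B_n$. As $D_n$ likewise has index $2$ in $B_n$ and sits inside this kernel, the two subgroups coincide; applying $\Phi$ then yields $\Phi(D_n) = \Norm(\tau) \cap S_{2n}^+$.
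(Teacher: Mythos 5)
Your proof is correct, but it follows a genuinely different route from the paper's. You define $\Phi$ by transporting the faithful action of $B_n$ on the signed basis vectors $\{\pm e_1 \DT, \pm e_n\}$ through the bijection $\iota$, which makes well-definedness and the homomorphism property automatic; the paper instead defines $\Phi$ on words in the reflections and must prove (by induction, via its Lemma \ref{Lm:ActGen}) that the result is independent of the chosen factorization. For the image, you get $\Phi(B_n) \subseteq \Norm(\tau)$ from linearity ($g(-v) = -g(v)$) and then close the gap by a counting argument: the centralizer of a permutation of cycle type $2^n$ in $S_{2n}$ has order $2^n\, n! = \lmod B_n\rmod$. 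The paper proves the reverse inclusion constructively: it establishes the $\alpha$-pair/$\beta$-cycle description of elements of $\Norm(\tau)$ (Lemma \ref{Lm:AlphaBeta}) and exhibits explicit factorizations of each $\alpha$-pair and $\beta$-cycle into the generators $r_{ij}$ and $l_i$. Similarly, for $D_n$ you use a clean index-two argument (the sign character $\name{sgn} \circ \Phi$ is surjective because $\Phi(\ell_i)$ is odd, its kernel contains $D_n$, and $[B_n : D_n] = 2$), whereas the paper counts $\beta$-cycles modulo $2$ and factors products of pairs of $\beta$-cycles into the $r_{ij}$. Your approach is shorter and less error-prone, at the cost of invoking standard external facts — $\lmod B_n \rmod = 2^n n!$, $[B_n:D_n]=2$, and the centralizer-order formula in symmetric groups — all of which are legitimately citable from Humphreys, so this is not a gap. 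What the paper's heavier, constructive proof buys is not just self-containedness: Lemma \ref{Lm:AlphaBeta} and the explicit generator factorizations are the workhorses for everything that follows (the description of conjugacy classes $C_{\lambda\mid\mu}$, the multiplicity computations, and the cut-and-join analysis), so the paper is deliberately front-loading machinery that your streamlined argument would have to introduce later anyway.
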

		
		Introduce a notation more convenient for our purposes: let $r_{ij}
		\bydef (i,j)(i+n,j+n)$ for all $1 \le i \ne j \le 2n$, $i \ne j\pm n$;
		addition is modulo $2n$. So if $1 \le i < j \le n$ then $r_{ij} =
		r_{i+n,j+n} = \Phi(s_{ij}^+)$ and $r_{i,j+n} = r_{i+n,j} =
		\Phi(s_{ij}^-)$. Also denote $l_i \bydef (i,i+n)$ for all $1 \le i \le
		2n$; so, $l_i = l_{i+n} = \Phi(\ell_i)$ if $i \le n$.
		
		A proof (rather elementary) of Proposition \ref{Pp:Embed} is preceded
		by an explicit description of $\Norm(\tau)$, which will be used
		extensively throughout this article:
		
		\begin{lemma}\label{Lm:AlphaBeta}
			If $x \in \Norm(\tau)$ then for every cycle $(\gamma_1 \dots \gamma_m)$
			of its cycle decomposition one of the following is true:
			
			\begin{enumerate}
				\item\label{It:Alpha} The cycle decomposition contains another
				cycle $(\gamma_1' \dots \gamma_m')$ of the same length such that
				$\gamma_i' = \tau(\gamma_i)$ for $i = 1 \DT, s$.
				\item\label{It:Beta} $s = 2t$ is even and $\gamma_{i+t} =
				\tau(\gamma_i)$ for all $i = 1 \DT, t$.
			\end{enumerate}
		\end{lemma}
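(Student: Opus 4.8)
The plan is to exploit the commutation $x\tau = \tau x$ at the level of individual cycles. The starting observation is that conjugation by $\tau$ carries the cycle decomposition of $x$ to that of $\tau x \tau^{-1} = x$, so it merely permutes the cycles of $x$ among themselves. Concretely, if $(\gamma_1 \dots \gamma_m)$ is a cycle of $x$, meaning $x(\gamma_i) = \gamma_{i+1}$ with indices read modulo $m$, then applying $\tau x \tau^{-1} = x$ to $\tau(\gamma_i)$ gives $x(\tau(\gamma_i)) = \tau(\gamma_{i+1})$. Hence $(\tau(\gamma_1) \dots \tau(\gamma_m))$ is again a cycle of $x$, of the same length $m$.

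Next I would dichotomize according to whether this image cycle is distinct from the original one; since distinct cycles of a permutation have disjoint supports, there are exactly two possibilities. If $\tau(\gamma_1) \notin \{\gamma_1, \dots, \gamma_m\}$, the two supports are disjoint and $(\tau(\gamma_1) \dots \tau(\gamma_m))$ is a genuinely different cycle of the same length; setting $\gamma_i' = \tau(\gamma_i)$ yields alternative \ref{It:Alpha}. Otherwise $\tau$ maps the support of the cycle into itself, and this is the case to analyse with care.

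In that second case, $\tau(\gamma_1) = \gamma_{1+t}$ for a unique $t \in \{1, \dots, m-1\}$; the value $t = 0$ is excluded because $\tau$ is fixed-point free, so $\tau(\gamma_1) \neq \gamma_1$. As the two cyclic words represent the same cycle of $x$ and agree on one entry, they agree throughout, i.e. $\tau(\gamma_i) = \gamma_{i+t}$ for all $i$. Applying $\tau$ once more and using $\tau^2 = \mathrm{id}$ gives $\gamma_i = \gamma_{i+2t}$, that is $2t \equiv 0 \pmod m$.

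The main obstacle is squeezing the exact shape of alternative \ref{It:Beta} out of these two congruences. From $m \mid 2t$ together with $m \nmid t$ (again fixed-point-freeness) one sees that $m$ cannot be odd, since an odd $m$ dividing $2t$ would divide $t$. Thus $m$ is even, and the only $t \in \{1, \dots, m-1\}$ with $2t \equiv 0 \pmod m$ is $t = m/2$. Writing $m = 2t$ then gives $\gamma_{i+t} = \tau(\gamma_i)$ for all $i$, which is precisely alternative \ref{It:Beta}. A final remark is that the two alternatives are mutually exclusive and exhaustive for each cycle, which is immediate from the disjoint-or-equal dichotomy for cycles.
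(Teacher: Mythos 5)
Your proof is correct and follows essentially the same route as the paper's: conjugate the cycle decomposition by $\tau$, invoke its uniqueness to see that $\tau$ permutes the cycles, and split into the distinct-cycle case ($\alpha$-pair) and the same-cycle case ($\beta$-cycle). If anything, you are more careful than the paper, which compresses the final step into ``which implies $m_i = 2t$'': you explicitly rule out $t = 0$ via fixed-point-freeness of $\tau$ and show that $2t \equiv 0 \pmod{m}$ then forces $m$ even with $t = m/2$.
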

		
		In the two cases we speak about {\em $\alpha$-pairs} of cycles and
		about {\em $\beta$-cycles}, respectively.
		
		\begin{proof}
			Let
			\begin{equation*}
				x = (\gamma_{11} \dots \gamma_{1m_1}) \dots (\gamma_{q1} \dots
				\gamma_{q m_q})
			\end{equation*}
			be the cycle decomposition. If $\tau x=x\tau$ then $\tau x \tau^{-1}
			= x$. So
			\begin{equation*}
				x = (\tau(\gamma_{11}) \dots \tau(\gamma_{1m_1})) \dots
				(\tau(\gamma_{q1}) \dots \tau(\gamma_{qm_q}))
			\end{equation*}
			The cycle decomposition is unique, so every cycle
			$(\tau(\gamma_{i1}) \dots \tau(\gamma_{im_i}))$ must be equal to some
			$(\gamma_{j_1} \dots \gamma_{j m_j})$. If these two cycles are
			different, then they form an $\alpha$-pair. If these
			two cycles are the same then there exists some $t$ such that
			$\tau(\gamma_k) = \gamma_{k+t\bmod m_i}$ for all $k = 1 \DT, m_i$. Since
			$\tau$ is an involution, one has $\gamma_k = \tau^2(\gamma_i)
			=\tau(\gamma_{k+t \bmod m_i}) = \gamma_{k+2t \bmod m_i}$ for all $k
			= 1 \DT, m_i$, which implies $m_i = 2t$, and this is a $\beta$-cycle.
		\end{proof}
		
		\begin{proof}[Proof of Proposition \ref{Pp:Embed}]
			Any element $x \in B_n$ is representable as $x = a_1\dots a_N$ where
			every $a_k$ is a reflection. Denote by $e_1 \DT, e_n$ the standard
			basis in $\Real^n$.
			
			\begin{lemma}\label{Lm:ActGen}
				For any $k = 1 \DT, n$ one has $x(e_k) = \pm e_\ell$ for some
				$\ell = 1 \DT, n$. If $x(e_k) = e_\ell$ then $(\tilde a_1 \DT
				\tilde a_N)(k) = \ell$, and if $x(e_k) = -e_\ell$ then
				$(\tilde a_1 \DT \tilde a_N)(k) = \t(\ell) (= \ell+n)$.
			\end{lemma}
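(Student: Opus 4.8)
The plan is to show that the permutation $\tilde a_1 \cdots \tilde a_N$ acts on the index set $\{1,\dots,2n\}$ exactly as $x$ acts on the signed basis vectors, under the dictionary $e_\ell \leftrightarrow \ell$ and $-e_\ell \leftrightarrow \ell+n$. Concretely, I would introduce the bijection $\phi$ from the $2n$ signed basis vectors $\{\pm e_1,\dots,\pm e_n\}$ to $\{1,\dots,2n\}$ given by $\phi(e_\ell) = \ell$ and $\phi(-e_\ell) = \ell + n = \t(\ell)$. With this encoding the two assertions of the lemma merge into the single statement $\phi(x(e_k)) = (\tilde a_1 \cdots \tilde a_N)(k)$, while the preliminary claim $x(e_k) = \pm e_\ell$ merely records that $x$, being a product of reflections of $B_n$, is a signed permutation and hence maps each signed basis vector to a signed basis vector.

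The heart of the matter is a single-reflection intertwining relation: for each generating reflection $a \in \{s_{ij}^+, s_{ij}^-, \ell_i\}$ and each signed basis vector $v$ one has $\phi(a(v)) = \Phi(a)(\phi(v))$, i.e. $\phi \circ a = \Phi(a) \circ \phi$ as maps. First I would verify this case by case. For $\ell_i$ one has $\ell_i(e_i) = -e_i$ with $\phi(-e_i) = i+n = (i,i+n)(i)$, and $\ell_i$ fixes every other signed vector, matching the transposition $(i,i+n) = \Phi(\ell_i)$. For $s_{ij}^+$, which swaps $e_i \leftrightarrow e_j$ (and $-e_i \leftrightarrow -e_j$), the $\phi$-labels are swapped by $(ij)(i+n,j+n) = \Phi(s_{ij}^+)$. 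For $s_{ij}^-$, which sends $e_i \mapsto -e_j$ and $e_j \mapsto -e_i$, the labels $i \mapsto j+n$, $j \mapsto i+n$ are realised by $(i,j+n)(i+n,j) = \Phi(s_{ij}^-)$. Each of these is a direct check on the two (or four) vectors actually moved, the remaining vectors being fixed on both sides.

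With the intertwining relation in hand, I would finish by induction on $N$. For $N = 0$ the statement is trivial. For the inductive step, write $x = a_1 y$ with $y = a_2 \cdots a_N$; since $y(e_k)$ is again a signed basis vector, applying the relation for $a_1$ and then the inductive hypothesis for $y$ gives $\phi(x(e_k)) = \phi(a_1(y(e_k))) = \Phi(a_1)(\phi(y(e_k))) = \tilde a_1 (\tilde a_2 \cdots \tilde a_N)(k)$, which is precisely the desired identity. Reading off the two cases $\phi(x(e_k)) = \ell$ and $\phi(x(e_k)) = \ell + n = \t(\ell)$ then recovers the two displayed alternatives.

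The computations are entirely routine; the only point demanding care is bookkeeping the order of composition, so that the left action of $x$ on $\Real^n$ matches the left action of $\tilde a_1 \cdots \tilde a_N$ on indices taken in the same order rather than the reverse. I do not anticipate a genuine obstacle, but it is worth noting that no uniqueness of the factorization $x = a_1 \cdots a_N$ is required: the identity holds for every such factorization, and comparing two factorizations of the same $x$ is exactly what will yield the well-definedness of $\Phi$ in the proof of Proposition \ref{Pp:Embed}.
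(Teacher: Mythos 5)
Your proof is correct and is essentially the paper's own argument: the paper simply states that the lemma ``is proved by an immediate induction by $N$,'' and your write-up is exactly that induction, with the base case spelled out as the intertwining relation $\phi \circ a = \Phi(a) \circ \phi$ checked on each generator $s_{ij}^{+}$, $s_{ij}^{-}$, $\ell_i$. Your closing remark that no uniqueness of the factorization is needed (and that comparing factorizations is what later gives well-definedness of $\Phi$) also matches how the lemma is used in the proof of Proposition \ref{Pp:Embed}.
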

			
			The lemma is proved by an immediate induction by $N$.
			
			Let us define now the map $\Phi: B_n \to S_{2n}$ as $\Phi(x) =
			\tilde a_1 \dots \tilde a_N$. If $x = b_1 \dots b_M$ is another
			representation of $x$ as a product of reflections, then Lemma
			\ref{Lm:ActGen} implies that $(\tilde b_1 \dots \tilde b_M)(k) =
			(\tilde a_1 \dots \tilde a_N)(k)$ for every $k$. Thus, the map
			$\Phi$ is well-defined and is a group homomorphism.
			
			Suppose $\Phi(x) = \name{id}$ (that is, $x$ belongs to the kernel of
			$\Phi$). Then $(\tilde a_1 \dots \tilde a_N)(k) = k$ for every $k =
			1 \DT, n$, and Lemma \ref{Lm:ActGen} asserts then that $x(e_k) =
			e_k$ for every $k$. Thus, $x = \name{id}$, so $\Phi$ is an
			embedding.
			
			Show now that $\Phi(B_n) = \Norm(\tau)$. Obviously, $\Phi(B_n) \subset
			\Norm(\tau)$ because $\Phi(s^\pm_{ij})$ and $\Phi(\ell_i)$ commute
			with $\tau$. Also, $\Phi(D_n) \subset S_{2n}^+$ because $r_{ij} \in
			S_{2n}^+$.
			
			Prove now that $\Norm(\tau) \subset \Phi(B_n)$. Let $x \in
			\Norm(\tau)$. By Lemma \ref{Lm:AlphaBeta}, $x$ is a product of
			$\alpha$-pairs and of $\beta$-cycles, so to prove that $x \in
			\Phi(B_n)$ it suffices to show that any $\alpha$-pair and any
			$\beta$-cycle are products of the elements $r_{ij}$ and
			$l_i$ ($1 \le i < j\le 2n$). This is the case:
			\begin{equation*}
				(i_1 \DT, i_m)(\tau(i_1) \DT, \tau(i_m)) = r_{i_1 i_2}
				\dots r_{i_{m-1} i_m}
			\end{equation*}
			and
			\begin{align*}
				(i_1 \DT, i_m \tau(i_1) \DT, \tau(i_m)) &= (i_1 \DT,
				i_m)(\tau(i_1) \DT, \tau(i_m)) (i_1\tau(i_1))\\
				&= r_{i_1 i_2} \dots r_{i_{m-1} i_m}
				l_{i_1}.
			\end{align*}
			To prove that $\Norm(\tau) \cap S_{2n}^+ \subset \Phi(D_n)$ observe
			that the element $x \in \Norm(\tau)$ is even if and only it contains
			an even number of $\beta$-cycles. (Indeed, an $\alpha$-pair is always
			an even permutation, while a $\beta$-cycle is odd because its length
			is even.) So it suffices to prove that the product of any two
			$\beta$-cycles is a product of $r_{ij}$ (for
			$\alpha$-cycles if was proved above). Indeed,
			\begin{align*}
				(i_1 \dots i_m \tau(i_1) &\dots \tau(i_m)) (j_1 \dots i_q
				\tau(j_1) \dots \tau(j_q))\\
				&= (i_1 \dots i_m)(\tau(i_1) \dots \tau(i_m)) (j_1 \dots i_q)
				(\tau(j_1) \dots \tau(j_q)) (i_1 \tau(i_1))(j_1 \tau(j_1)).
			\end{align*}
			The product of the first four cycles is already proved to be a
			product of $r_{ij}$, while for the two remaining cycles
			one has
			\begin{equation*}
				(i_1 \tau(i_1))(j_1 \tau(j_1)) = r_{ij} r_{i,\tau(j)}.
			\end{equation*}
		\end{proof}
		
		To save space, below we will not distinguish $B_n$ and $D_n$ from
		their images $\Phi(B_n), \Phi(D_n) \subset S_{2n}$.
		
		\subsection{Conjugacy classes}\label{SSec:ConjClass}
		
		Conjugacy classes in the permutation group $S_m$ are in one-to-one
		correspondence with partitions of $m$: two elements of $S_m$ are
		conjugate if and only if they have cycles of the same lengths
		(totalling $m$) in their cycle decomposition. A similar result for the
		reflection groups $B_n$ and $D_n$ can be found in \cite{Carter}. Quote
		it here for coompleteness; be warned that the original notation of
		\cite{Carter} differ from the one used here.
		
		Fix two partitions, $\lambda = (\lambda_1 \DT, \lambda_s)$ and
		$\mu = (\mu_1 \DT, \mu_t)$ such that $\lmod \lambda\rmod + \lmod
		\mu\rmod = \lambda_1 \DT+ \lambda_s + \mu_1 \DT+ \mu_t = n$, and
		consider a set $C_{\lambda\mid\mu}$ of elements $x \in B_n \subset
		S_{2n}$ such that their cycle decomposition contains
		\begin{enumerate}
			\item $\alpha$-pairs of lengths (each cycle) $\lambda_1, \lambda_2
			\DT, \lambda_s$;
			
			\item $\beta$-cycles of lengths $2\mu_1 \DT, 2\mu_t$ (recall that the
			length should be even).
		\end{enumerate}
		
		\begin{proposition}[\protect{\cite[Proposition 25]{Carter}}]\label{Pp:ConjClass}
			The set $C_{\lambda\mid\mu} \subset B_n$ is a conjugacy class. Every
			conjugacy class in $B_n$ is $C_{\lambda\mid\mu}$ for some $\lambda$ and
			$\mu$ such that $\lmod\lambda\rmod + \lmod\mu\rmod = n$.
		\end{proposition}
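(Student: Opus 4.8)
The plan is to prove two things: that the data $(\lambda,\mu)$ is a complete conjugacy invariant on $B_n = \Norm(\tau)$, and that every admissible pair of partitions arises. First I would observe that, by Lemma~\ref{Lm:AlphaBeta}, each $x \in \Norm(\tau)$ has a cycle decomposition consisting solely of $\alpha$-pairs and $\beta$-cycles, so one can read off an unordered pair of partitions: $\lambda$ records the common length of the two cycles in each $\alpha$-pair, and $\mu$ records the half-lengths of the $\beta$-cycles. Since an $\alpha$-pair of length $\lambda_i$ occupies $2\lambda_i$ of the $2n$ symbols and a $\beta$-cycle of length $2\mu_j$ occupies $2\mu_j$, the constraint $\lmod\lambda\rmod + \lmod\mu\rmod = n$ holds automatically, and the sets $C_{\lambda\mid\mu}$ are pairwise disjoint and cover $B_n$. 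Thus it suffices to show that each nonempty $C_{\lambda\mid\mu}$ is a single conjugacy class, for then the covering property forces every class to be one of them.

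That $C_{\lambda\mid\mu}$ is a union of conjugacy classes is immediate: if $g \in \Norm(\tau)$, then conjugation by $g$ sends a cycle $(\gamma_1 \DT, \gamma_m)$ of $x$ to $(g(\gamma_1) \DT, g(\gamma_m))$, and because $g$ commutes with $\tau$ it carries $\alpha$-pairs to $\alpha$-pairs of the same length and $\beta$-cycles to $\beta$-cycles of the same length; hence the invariant $(\lambda,\mu)$ is preserved. The content of the proposition is the converse --- transitivity --- and for this I would construct an explicit conjugator. Given $x, y \in C_{\lambda\mid\mu}$, match the $\alpha$-pairs of $x$ with those of $y$ of equal length and the $\beta$-cycles with those of equal length. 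For an $\alpha$-pair, write it as $(i_1 \DT, i_m)(\tau(i_1) \DT, \tau(i_m))$ for $x$ and $(i_1' \DT, i_m')(\tau(i_1') \DT, \tau(i_m'))$ for $y$, and set $g(i_k) = i_k'$, $g(\tau(i_k)) = \tau(i_k')$. For a $\beta$-cycle, write it as $(i_1 \dots i_m \tau(i_1) \dots \tau(i_m))$ for $x$ and similarly for $y$, and again set $g(i_k) = i_k'$ and $g(\tau(i_k)) = \tau(i_k')$. Doing this over all blocks defines a permutation $g$ that commutes with $\tau$ by construction, so $g \in \Norm(\tau)$, and a direct check of $g \circ x = y \circ g$ on each symbol (using that $x$, $y$, and $g$ all commute with $\tau$) shows $gxg^{-1} = y$.

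The step I expect to require the most care is the $\beta$-cycle case of this verification: unlike an $\alpha$-pair, a single $\beta$-cycle interleaves a block and its $\tau$-image, so I must confirm that forcing $g(\tau(i_k)) = \tau(i_k')$ (as required for $g \in \Norm(\tau)$) is compatible with the cyclic order --- concretely that $x$ sends $i_m \mapsto \tau(i_1)$ and $\tau(i_m) \mapsto i_1$, matched by $y$ on the primed symbols, so that no orientation clash arises. This is exactly the point where conjugacy inside $\Norm(\tau)$ is more restrictive than conjugacy in $S_{2n}$, though here it imposes no genuine obstruction (this reflects the classical fact that $B_n \cong \Integer_2 \wr S_n$ has conjugacy classes indexed by pairs of partitions). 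Finally, to see that every $C_{\lambda\mid\mu}$ with $\lmod\lambda\rmod + \lmod\mu\rmod = n$ is nonempty --- and hence a genuine class --- I would exhibit a standard representative, assembling $\alpha$-pairs and $\beta$-cycles of the prescribed lengths on a partition of $\{1 \DT, 2n\}$ into $\tau$-stable blocks; this together with the covering property established above completes the identification of the conjugacy classes of $B_n$.
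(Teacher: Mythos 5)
Your proof is correct, but note that the paper itself does not prove Proposition~\ref{Pp:ConjClass}: it is quoted from \cite[Proposition 25]{Carter} without proof, so your argument is not a variant of the paper's --- it is a replacement for an external citation. Within the paper's framework it is the natural argument: Lemma~\ref{Lm:AlphaBeta} shows that the sets $C_{\lambda\mid\mu}$ partition $B_n = \Norm(\tau)$ (one small point worth making explicit: fixed points of $x$ come in $\tau$-paired $1$-cycles, hence count as $\alpha$-pairs of length $1$, so the blocks really exhaust $\{1,\dots,2n\}$ and your $g$ is defined everywhere); conjugation inside $\Norm(\tau)$ preserves the invariant $(\lambda,\mu)$ because the conjugator commutes with $\tau$; and transitivity follows from your block-by-block conjugator $g(i_k)=i_k'$, $g(\tau(i_k))=\tau(i_k')$, which lies in $\Norm(\tau)$ by construction. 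The step you flag as delicate is indeed the only place where conjugacy in $\Norm(\tau)$ could conceivably be finer than in $S_{2n}$, and your wrap-around check ($x$ sends $i_m\mapsto\tau(i_1)$ and $\tau(i_m)\mapsto i_1$, matched by $y$ on the primed symbols) settles it. By contrast, Carter's original proof is carried out in the language of signed permutations, using $B_n \cong \Integer_2 \wr S_n$, where classes are indexed by pairs of cycle types (positive and negative cycles); the dictionary to the paper's picture is: $\alpha$-pair $\leftrightarrow$ positive cycle, $\beta$-cycle $\leftrightarrow$ negative cycle. What your route buys is self-containedness --- together with Proposition~\ref{Pp:Embed} and Lemma~\ref{Lm:AlphaBeta} it eliminates the external dependency for the $B_n$ classification; what the citation buys is brevity, and it also covers Proposition~\ref{Pp:ConjD} for $D_n$, whose splitting of $C_{\lambda\mid\emptyset}$ into two classes is genuinely subtler and is not addressed (and need not be) by your argument.
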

		
		For $D_n$ the answer is slightly more complicated. Take a partition
		$\lambda = (\lambda_1 \DT, \lambda_s)$ such that $\lmod\lambda\rmod =
		n$ and all $\lambda_i$ are even. For an element $\sigma \in
		C_{\lambda\mid\emptyset}$ write its cycle decomposition
		\begin{equation}\label{Eq:2LambdaEmpty}
			\sigma = (\gamma_{11} \dots  \gamma_{1\lambda_1})\dots (\gamma_{s1} \dots \gamma_{s\lambda_s})
			(\tau(\gamma_{11})
			\dots \tau(\gamma_{1\lambda_1})) \dots (\tau(\gamma_{s1})
			\dots \tau(\gamma_{s\lambda_s}))
		\end{equation}
		satisfying two conditions: cycles forming an $\alpha$-pair always
		stand in position $i$ and $i+s$ in the cycle decomposition,
		 and their matching elements $\gamma_{ij} $ and
		$\tau(\gamma_{ij} ) $ occupy the same positions in them. Now consider
		a permutation $\gamma = (\gamma_{11} \dots \gamma_{s\lambda_s} ) \in
		S_{2n}$ where the numbers $\gamma_{ij}$ are written exactly in the
		same order as in \eqref{Eq:2LambdaEmpty}. We write $\sigma \in
		C_{\lambda\mid\emptyset}^+$ if $\gamma$ is even, and $\sigma \in
		C_{\lambda\mid\emptyset}^-$ if $\gamma$ is odd. Representation of
		$\sigma$ in the form \eqref{Eq:2LambdaEmpty} is not unique but, as it
		is easy to see, the parity of $\gamma$ does not depend on a particular
		choice. (Recall, all the cycles in \eqref{Eq:2LambdaEmpty} have even
		length.)
		
		\begin{proposition}[\protect{\cite[Proposition 25]{Carter}}]\label{Pp:ConjD}\strut
			\begin{enumerate}
				\item If the partition $\mu$ contains an even number of parts then
				the conjugacy class $C_{\lambda\mid\mu} \subset B_n$ lies in $D_n$;
				if the number of parts is odd then $C_{\lambda\mid\mu}$ does not
				intersect $D_n$.
				
				\item If $\mu \ne \emptyset$ and the number of parts of $\mu$ is
				even then $C_{\lambda\mid\mu}$ is a conjugacy class in $D_n$.
				
				\item If $\lambda$ is a partition of $n$ containing at least one odd
				part $\lambda_i$ then $C_{\lambda\mid\emptyset}$ is a conjugacy class
				in $D_n$.
				
				\item If $\lambda$ is a partition of $n$ such that all its parts are
				even then $C_{\lambda \mid \emptyset}$ splits into two conjugacy
				classes in $D_n$, $C_{\lambda\mid\emptyset}^+$ and
				$C_{\lambda\mid\emptyset}^-$.    
			\end{enumerate}
			Any conjugacy class in $D_n$ is one of the classes listed above.
		\end{proposition}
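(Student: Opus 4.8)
The plan is to treat $D_n$ as the kernel of the sign homomorphism $\name{sgn}\colon B_n \to \{\pm1\}$ (restriction of the sign on $S_{2n}$) and to reduce the whole statement to the behaviour of conjugacy classes under passage to an index-$2$ subgroup. First I would record the parity of a class: for $\sigma \in C_{\lambda\mid\mu}$ the computation already made in the proof of Proposition \ref{Pp:Embed} shows that an $\alpha$-pair is even while a $\beta$-cycle is odd, so $\name{sgn}(\sigma) = (-1)^{t}$ where $t$ is the number of $\beta$-cycles, i.e.\ the number of parts of $\mu$. Since all elements of $C_{\lambda\mid\mu}$ have the same cycle type they share this parity, which is exactly part (1): the class lies in $D_n$ when $\mu$ has an even number of parts and is disjoint from $D_n$ otherwise.

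The engine for parts (2)--(4) is an elementary lemma I would state and prove first: if $[G:H]=2$, $x\in H$ and $C=x^{G}\subseteq H$, then $C$ is a union of one or two $H$-classes; it is a single $H$-class precisely when $Z_G(x)\not\subseteq H$ (equivalently, some element of $G\setminus H$ commutes with $x$), and otherwise it splits into exactly two $H$-classes of equal size, interchanged by any fixed element of $G\setminus H$. The proof is the usual orbit count comparing $[H:Z_H(x)]$ with $[G:Z_G(x)]$ together with $Z_H(x)=Z_G(x)\cap H$; in particular a single $B_n$-class inside $D_n$ can never break into more than two $D_n$-classes.

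With this in hand, parts (2) and (3) reduce to exhibiting an odd element of $B_n$ commuting with a chosen representative. For (2), if $\mu\ne\emptyset$ take any single $\beta$-cycle $c$ of $\sigma$: it lies in $\Norm(\tau)$, commutes with $\sigma$ (it is one of its cycles), and is odd because its length is even; hence $Z_{B_n}(\sigma)\not\subseteq D_n$ and there is no splitting. For (3), choose the $\alpha$-pair $c\bar c$ of odd length $\lambda_i$, with $c=(\gamma_1,\dots,\gamma_{\lambda_i})$ and $\bar c=\tau(c)$, and set $h=\prod_{j} l_{\gamma_j}$; then $h\in\Norm(\tau)$ interchanges $c$ and $\bar c$, so it commutes with $\sigma$, while $\name{sgn}(h)=(-1)^{\lambda_i}=-1$. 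Again $Z_{B_n}(\sigma)\not\subseteq D_n$, so the class does not split.

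Part (4) is the delicate case, and here I would argue through the invariant $\name{sgn}(\gamma)$ rather than through the centralizer. The first step is to justify that $\name{sgn}(\gamma)$ does not depend on the presentation \eqref{Eq:2LambdaEmpty}. The admissible modifications are: cyclically rotating a cycle (simultaneously on it and its $\tau$-partner), reordering the $\alpha$-pairs, and exchanging the two cycles of an $\alpha$-pair. Reading $\gamma$ through its one-line word $\gamma_{11},\dots,\gamma_{s\lambda_s}$ together with the $\tau$-images, rotations and reorderings alter this word by a permutation occurring identically on the two $\tau$-matched halves, hence a square, hence even; the only non-automatic contribution is the exchange of the two cycles of an $\alpha$-pair, which multiplies $\name{sgn}(\gamma)$ by $(-1)^{\lambda_i}$, and this equals $+1$ precisely because each $\lambda_i$ is even. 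This is the single place where the hypothesis of (4) enters, and checking it cleanly is the main obstacle. Granting well-definedness, the key computation is the transformation law: building $\gamma$ for $\sigma$ and $\gamma'$ for $g\sigma g^{-1}$ from the $g$-translated transversal gives $\gamma'=g\gamma$ (using $g\in\Norm(\tau)$ on the second half), so $\name{sgn}(\gamma')=\name{sgn}(g)\,\name{sgn}(\gamma)$. Thus $\sigma\mapsto\name{sgn}(\gamma)$ is constant on $D_n$-classes and reversed by any odd element; since $B_n$ acts transitively on $C_{\lambda\mid\emptyset}$ (Proposition \ref{Pp:ConjClass}) and contains odd elements, both values occur, so $C_{\lambda\mid\emptyset}^{+}$ and $C_{\lambda\mid\emptyset}^{-}$ are non-empty, disjoint, and each a union of $D_n$-classes. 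Combined with the index-$2$ lemma, which forbids more than two, they are forced to be exactly the two $D_n$-classes into which $C_{\lambda\mid\emptyset}$ splits. Finally, the closing assertion that every $D_n$-class appears follows because any such class sits inside a unique $B_n$-class $C_{\lambda\mid\mu}$ satisfying the parity condition of (1), and the list (2)--(4), with its splittings, exhausts all of these.
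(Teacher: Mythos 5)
Your proof is correct, but it cannot be compared with the paper's own argument for the simple reason that there is none: Proposition \ref{Pp:ConjD} is quoted from \cite[Proposition 25]{Carter} without proof, so what you give is a genuinely different, self-contained route carried out entirely inside the paper's $S_{2n}$-model. Your structure is sound at every step: part (1) by the parity count ($\alpha$-pairs even, $\beta$-cycles odd) already noted in the proof of Proposition \ref{Pp:Embed}; parts (2)--(3) by the standard index-two splitting criterion plus explicit odd centralizing elements --- a single $\beta$-cycle, respectively $h=l_{\gamma_1}\cdots l_{\gamma_{\lambda_i}}$ built over an odd-length $\alpha$-pair; both indeed lie in $\Norm(\tau)$, commute with $\sigma$, and are odd permutations --- and part (4) by showing that the sign invariant is well defined and transforms by $\name{sgn}(\gamma')=\name{sgn}(g)\,\name{sgn}(\gamma)$ under conjugation by $g\in B_n$, which together with the index-two bound forces exactly the two classes $C^{\pm}_{\lambda\mid\emptyset}$. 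This buys self-containedness (Carter works in the signed-permutation model within the general theory of Weyl-group classes), and as a byproduct your transformation law re-proves Corollary \ref{Cr:ConjOdd}(2), which the paper instead deduces from the quoted proposition. One detail you handled better than the text itself: read literally, the paper's $\gamma=(\gamma_{11}\dots\gamma_{s\lambda_s})$ is a single $n$-cycle, whose sign is the constant $(-1)^{n-1}$, so the definition of $C^{\pm}_{\lambda\mid\emptyset}$ would be vacuous; your reading --- the permutation whose one-line word is $\gamma_{11},\dots,\gamma_{s\lambda_s}$ extended $\tau$-equivariantly to all of $\{1,\dots,2n\}$ --- is the only sensible one (it does separate $s^{+}_{12}$ from $s^{-}_{12}$ in $D_2$, for instance), and your two verifications (well-definedness, where evenness of all the $\lambda_i$ enters exactly once, and equivariance) are precisely what is needed to make that invariant usable.
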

		
		\begin{corollary}\label{Cr:ConjOdd}
			\begin{enumerate}
				\item Let $\lambda, \mu$ be partitions such that $\lmod\lambda\rmod
				+ \lmod\mu\rmod = n$, $\#\mu$ is even and either $\mu \ne
				\emptyset$ or at least one of the parts of $\lambda$ is odd. Then
				for any $x \in B_n$ and any $\sigma \in C_{\lambda\mid\mu} \subset
				D_n$ one has $x\sigma x^{-1} \in C_{\lambda\mid\mu}$. 
				\item Let $\lambda$ be a partition of $n$ such that all its parts
				are even, and let $x \in B_n$. Then for any $\sigma \in
				C_{\lambda\mid\emptyset}^+$ one has $x \sigma x^{-1} \in
				C_{\lambda\mid\emptyset}^+$ if $x \in D_n \subset B_n$ (that is,
				$x \in S_{2n}$ is an even permutation) and $x \sigma x^{-1} \in
				C_{\lambda\mid\emptyset}^-$ otherwise; the picture for $\sigma \in
				C_{\lambda\mid\emptyset}^-$ is symmetric.
			\end{enumerate}
		\end{corollary}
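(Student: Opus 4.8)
The plan is to treat the two parts quite differently: part (1) is essentially a restatement of Proposition \ref{Pp:ConjClass}, while part (2) is the substantive assertion, and I would prove it by a structural argument about the index-two inclusion $D_n \subset B_n$ rather than by manipulating the explicit invariant $\gamma$ used to define $C^{\pm}_{\lambda\mid\emptyset}$.

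For part (1), I would simply note that by Proposition \ref{Pp:ConjClass} the set $C_{\lambda\mid\mu}$ is a single conjugacy class of $B_n$, hence invariant under conjugation by every $x \in B_n$; thus $x\sigma x^{-1} \in C_{\lambda\mid\mu}$ for any $\sigma \in C_{\lambda\mid\mu}$ with no further work. The hypotheses on $\lambda,\mu$ (namely $\#\mu$ even, and $\mu \ne \emptyset$ or some $\lambda_i$ odd) serve only to guarantee, via Proposition \ref{Pp:ConjD}(1)--(3), that $C_{\lambda\mid\mu}$ lies entirely inside $D_n$ and is moreover a single $D_n$-class (the excluded case $\mu=\emptyset$ with all $\lambda_i$ even being exactly the one where splitting occurs); this makes the statement meaningful, but the stated conclusion requires only the $B_n$-class property.

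For part (2), I would argue as follows. The subgroup $D_n$ has index two in $B_n$, hence is normal, so conjugation by any $x \in B_n$ is an automorphism of $D_n$ and therefore permutes the conjugacy classes of $D_n$. By Proposition \ref{Pp:ConjD}(4) the single $B_n$-class $C_{\lambda\mid\emptyset}$ decomposes as a disjoint union $C^{+}_{\lambda\mid\emptyset} \sqcup C^{-}_{\lambda\mid\emptyset}$ of exactly two nonempty $D_n$-classes. Conjugation by $x$ carries a subset of this $B_n$-class to a subset of the same $B_n$-class, so it permutes the two-element set $\{C^{+}_{\lambda\mid\emptyset}, C^{-}_{\lambda\mid\emptyset}\}$; this defines a homomorphism $\varphi \colon B_n \to \Integer/2\Integer$. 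I would then check two points: first, $D_n \subseteq \ker\varphi$, because conjugation by an element of $D_n$ is an inner automorphism of $D_n$ and hence fixes each $D_n$-class; second, $\varphi$ is nontrivial, for otherwise $C^{+}_{\lambda\mid\emptyset}$ would be invariant under conjugation by all of $B_n$, i.e. a union of $B_n$-classes, which is impossible for a nonempty proper subset of the single $B_n$-class $C_{\lambda\mid\emptyset}$. Since $\varphi$ is a nontrivial homomorphism onto $\Integer/2\Integer$ whose kernel contains $D_n$, and $[B_n:D_n]=2$, we conclude $\ker\varphi = D_n$ exactly. Recalling that $x \in D_n$ means precisely that $x$ is an even permutation, this says conjugation by $x$ fixes both $C^{+}_{\lambda\mid\emptyset}$ and $C^{-}_{\lambda\mid\emptyset}$ when $x$ is even and interchanges them when $x$ is odd, which is the claim.

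The main obstacle is the nontriviality of $\varphi$, that is, producing an odd element that actually swaps the two classes; my argument extracts this abstractly from the fact that $C^{\pm}_{\lambda\mid\emptyset}$ are proper nonempty pieces of one $B_n$-class, thereby avoiding any explicit computation with the invariant $\gamma$. As an independent check one could instead conjugate a chosen representative by a single mirror reflection $l_i$ and verify directly from the definition of $\gamma$ that its sign flips; this confirms the conclusion on an example but is less economical than the structural argument, which also makes transparent why the effect depends only on the parity of $x$.
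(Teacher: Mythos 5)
Your proof is correct, and it actually supplies more than the paper does: the paper states this corollary with no proof at all, presenting it as an immediate consequence of Proposition \ref{Pp:ConjD}. Your part (1) coincides with the implicit reasoning --- $C_{\lambda\mid\mu}$ is a single $B_n$-conjugacy class by Proposition \ref{Pp:ConjClass}, hence invariant under all $B_n$-conjugation, the hypotheses serving only to place it inside $D_n$ as a single (non-split) $D_n$-class. For part (2), where the paper is silent, your index-two argument is complete: normality of $D_n$ makes conjugation by $B_n$ permute the pair of $D_n$-classes $\{C^+_{\lambda\mid\emptyset}, C^-_{\lambda\mid\emptyset}\}$, giving a homomorphism $\varphi\colon B_n \to \Integer/2\Integer$ whose kernel contains $D_n$; and $\varphi$ is nontrivial, since otherwise $C^+_{\lambda\mid\emptyset}$ would be a nonempty proper $B_n$-invariant subset of the single $B_n$-class $C_{\lambda\mid\emptyset}$, forcing $C^-_{\lambda\mid\emptyset} = \emptyset$; hence $\ker\varphi = D_n$ exactly, which is the assertion. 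A genuine merit of your route, worth emphasizing, is that it never touches the invariant $\gamma$ by which the paper labels the two classes: you use only the fact (Proposition \ref{Pp:ConjD}, item (4)) that $C_{\lambda\mid\emptyset}$ splits into exactly two $D_n$-classes, and the conclusion is symmetric under exchanging $+$ and $-$, so no explicit description of the labels is needed. This robustness matters here, because the paper's definition of $\gamma$ is problematic as literally written: the parity of a single cycle on $n = \lmod\lambda\rmod$ letters does not depend on its entries, so that definition must be reinterpreted (for instance as the parity of $\#\{(i,j) \mid \gamma_{ij} > n\}$, which is well defined precisely because all $\lambda_i$ are even). The alternative computational route you sketch at the end --- conjugating a representative by $l_k$ and checking that the (repaired) invariant flips, while conjugation by $r_{kl}$ preserves it --- would buy an explicit identification of which class contains which representative, something your structural argument does not provide; but the corollary, being label-symmetric, does not require it.
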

		
		In particular, $C_{{1^{n-2}2^1}\mid\varnothing} \subset D_n \subset
		B_n$ consists of all the reflections $r_{ij}$, and $C_{{1^{n-1}}\mid
			1}\subset B_n$, of all the reflections $l_i$.
		
		\section{Hurwitz numbers}\label{Sec:HurwNum}
		
		\subsection{Definitions}
		
		Fix a pair of partitions $\lambda, \mu$ with $\lmod\lambda\rmod +
		\lmod\mu\rmod = n$. Let $C_{\lambda\mid\mu} \subset B_n$ be the conjugacy
		class defined above.
		
		\begin{definition}\label{Df:HurwNumb}
			A sequence of reflections $(\sigma_1 \DT, \sigma_{m+\ell})$ of the
			group $B_n$ is said to have {\em profile} $(\lambda,\mu,m,\ell)$ if
			$\#\{p \mid \sigma_p = r_{ij}, 1 \le i < j \le 2n\} = m$,
			$\#\{p \mid \sigma_p = l_i, 1 \le i \le 2n\} = \ell$ and
			$\sigma_1 \dots \sigma_{m+\ell} \in C_{\lambda \mid \mu}$. The {\em
				Hurwitz numbers for the group $B_n$} are $h_{m,\ell,\lambda,\mu}^B
			= \frac{1}{n!} \#\{(\sigma_1 \DT, \sigma_{m+\ell}) \text{ is a
				sequence of profile } (\lambda,\mu,m,\ell)\}$.
		\end{definition}
		
		For the group $D_n$ we use the same numbers, in case they make sense:
		
		\begin{definition}\label{Df:D-Hurwitz}
			Let $m$ be a positive integer, and $\lambda$ and $\mu$, partitions
			where the number of parts $\#\mu$ is even. The Hurwitz number for
			the group $D_n$ is defined as $h_{m,\lambda,\mu}^D =
			h_{m,0,\lambda,\mu}^B$.
		\end{definition}
		
		In other words, $h_{m,\lambda,\mu}^D = \frac{1}{n!} \#\{(\sigma_1 \DT,
		\sigma_m) \text{ has profile } (\lambda,\mu,m)\}$ where $\sigma_p$ are
		reflections in the group $D_n$ (that is, $\sigma_p = r_{ij}$ for some
		$i$ and $j$), and the profile means that $\sigma_1 \dots \sigma_m \in
		C_{\lambda\mid\mu} \subset D_n$.
		
		\begin{remark}\label{Rm:DHurwRefine}
			Recall (Proposition \ref{Pp:ConjD}) that if $\mu = \emptyset$ and
			all the parts of the partition $\lambda$ are even then
			$C_{\lambda\mid\emptyset}$ splits into two conjugacy classes,
			$C_{\lambda\mid\emptyset}^+$ and $C_{\lambda\mid\emptyset}^-$.
			
			Denote by $\Sigma_{m,\lambda}^+$ and $\Sigma_{m,\lambda}^-$ the sets
			of $m$-tuples of reflections $\sigma_1 \DT, \sigma_m \in D_n$ such
			that $\sigma_1 \dots \sigma_m \in C_{\lambda\mid\emptyset}^+$
			(resp., $C_{\lambda\mid\emptyset}^-$). One can denote
			$h_{m,\lambda,\emptyset\pm}^D \bydef \frac{1}{n!}
			\#\Sigma_{m,\lambda}^\pm$, so that $h_{m,\lambda,\emptyset+}^D +
			h_{m,\lambda,\emptyset-}^D = h_{m,\lambda,\emptyset}^D$. It follows
			from Corollary \ref{Cr:ConjOdd}, though, that if $x \in B_n$ but $x
			\notin D_n$ then the map sending an $m$-tuple $(\sigma_1 \DT,
			\sigma_m)$ to $(x\sigma_1x^{-1} \DT, x\sigma_mx^{-1})$ is a
			bijection between $\Sigma_{m,\lambda}^+$ and
			$\Sigma_{m,\lambda}^-$. Thus, $h_{m,\lambda,\emptyset+}^D =
			h_{m,\lambda,\emptyset-}^D = \frac12 h_{m,\lambda,\emptyset}^D$, so
			considering $h_{m,\lambda,\emptyset\pm}^D$ makes little sense.
		\end{remark}
		
		Up to the end of this section we consider the Hurwitz numbers for the
		group $B_n$ only.
		
		Denote by $\CSum{\lambda}{\mu} \bydef \frac{1}{\#C_{\lambda\mid\mu}}
		\sum_{x \in C_{\lambda\mid\mu}} x \in \Complex[B_n]$ the normalized
		class sum. One has $\CSum{\lambda}{\mu} \in Z[B_n]$ (the
		center of the group algebra of $B_n$); by Proposition
		\ref{Pp:ConjClass}, $\CSum{\lambda}{\mu}$ form a basis in
		$Z[B_n]$. Consider now a ring of polynomials $\Complex[p,q]$
		where $p = (p_1, p_2, \dots)$ and $q = (q_1, q_2, \dots)$ are two
		infinite sets of variables. The ring is graded by the total degree
		where one assumes $\deg p_k=\deg q_k =k$ for all $k=1,2,\dots$. The
		map $\Psi$ defined by
		\begin{equation}\label{Eq:DefIsoB}
			\Psi(\CSum{\lambda}{\mu}) = p_\lambda q_\mu \bydef p_{\lambda_1}
			\dots p_{\lambda_s} q_{\mu_1} \dots q_{\mu_t}
		\end{equation}
		establishes an isomorphism between $Z[B_n]$ and the
		homogeneous component $\Complex[p,q]_n$ of total degree $n$.
		
		Now denote by
		\begin{equation}\label{Eq:DefT1}
			\mathcal T_1 := \frac12 \sum_{1 \le i < j \le 2n}
			r_{ij} = \#C_{2 1^{n-2}\mid\emptyset} \cdot \CSum{1^{n-2}2}{\emptyset}
		\end{equation}
		and
		\begin{equation*}
			\mathcal T_2 := \frac12 \sum_{1 \le i \le 2n} l_i = \#C_{1^{n-1}\mid1}
			\cdot \CSum{1^{n-1}}{1}
		\end{equation*}
		sums of all elements of the conjugacy classes containing reflections
		(recall that $r_{ij} = r_{i+n,j+n}$, and $l_i = l_{i+n}$, so every
		element is repeated twice in these sums; hence the factor
		$\frac12$). The elements $\mathcal T_1$ and $\mathcal T_2$ belong to
		$Z[B_n]$, so one can consider linear operators $T_1, T_2: Z[B_n] \to
		Z[B_n]$ of multiplication by $\mathcal T_1$ and $\mathcal T_2$,
		respectively. Obviously, $T_1$ and $T_2$ commute.
		
		Consider now linear operators $\CJ_1, \CJ_2: \Complex[p,q]_n \to
		\Complex[p,q]_n$ making the following diagrams commutative:
		
		\begin{equation}\label{Eq:CJViaGAlg}
			\xymatrix{
				Z[B_n] \ar[r]^{\times T_i} \ar[d]^\simeq &
				Z[B_n] \ar[d]^\simeq  \\
				\mathbb{C}[p,q]_n \ar[r]_{\CJ_i} &
				\mathbb{C}[p,q]_n   
			}, \qquad i = 1,2.
		\end{equation}
		
		Let now $\lambda, \lambda', \mu, \mu'$ be partitions such that
		$\lmod\lambda\rmod + \lmod\mu\rmod = \lmod\lambda'\rmod +
		\lmod\mu'\rmod = n$. Take an element $\sigma_* \in C_{\lambda\mid\mu}$
		and define the {\em multiplicity} $\langle \lambda, \mu \mid \lambda',
		\mu'\rangle_1$ as the number of reflections $u \in C_{2^11^{n-2} \mid
			\varnothing}$ (that is, $u = r_{ij}$) such that $u
		\sigma_* \in C_{\lambda'\mid\mu'}$; the multiplicity $\langle \lambda,
		\mu \mid \lambda', \mu'\rangle_2$ is defined in the same way with
		$u\in C_{1^{n-1}\mid1}$ (that is, $u = l_i$) instead.
		
		\begin{lemma}\label{Lm:Conj}
			Multiplicities do not depend on the choice of $\sigma_*$.
		\end{lemma}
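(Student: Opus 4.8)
The plan is to exploit the fact that the multiplicity counts a set defined entirely through conjugacy classes, which are invariant under conjugation by elements of $B_n$. Write $R_1 \bydef C_{2^11^{n-2}\mid\emptyset}$ (the set of all $r_{ij}$) and $R_2 \bydef C_{1^{n-1}\mid1}$ (the set of all $l_i$); for $i = 1,2$ and a fixed $\sigma_* \in C_{\lambda\mid\mu}$ set
\begin{equation*}
	N_i(\sigma_*) = \#\{u \in R_i \mid u\sigma_* \in C_{\lambda'\mid\mu'}\},
\end{equation*}
so that $\langle \lambda, \mu \mid \lambda', \mu'\rangle_i = N_i(\sigma_*)$. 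The goal is to show that $N_i(\sigma_*)$ is independent of the choice of $\sigma_*$ within $C_{\lambda\mid\mu}$.

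First I would take a second representative $\sigma_*' \in C_{\lambda\mid\mu}$. Since $C_{\lambda\mid\mu}$ is a single conjugacy class (Proposition \ref{Pp:ConjClass}), there is some $x \in B_n$ with $\sigma_*' = x\sigma_*x^{-1}$. Consider the conjugation map $\phi\colon u \mapsto xux^{-1}$. Because each $R_i$ is itself a full conjugacy class of $B_n$ (namely the reflections $r_{ij}$ and $l_i$, respectively, as recorded after Corollary \ref{Cr:ConjOdd}), the map $\phi$ carries $R_i$ bijectively onto $R_i$.

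Next I would verify that $\phi$ restricts to a bijection between the sets counted by $N_i(\sigma_*)$ and $N_i(\sigma_*')$. For any $u \in R_i$ one computes
\begin{equation*}
	\phi(u)\,\sigma_*' = (xux^{-1})(x\sigma_*x^{-1}) = x\,(u\sigma_*)\,x^{-1},
\end{equation*}
so $\phi(u)\sigma_*'$ is a conjugate of $u\sigma_*$. Since the conjugacy class $C_{\lambda'\mid\mu'}$ is invariant under conjugation by $x$, we get $u\sigma_* \in C_{\lambda'\mid\mu'}$ if and only if $\phi(u)\sigma_*' \in C_{\lambda'\mid\mu'}$. Hence $\phi$ maps $\{u \in R_i \mid u\sigma_* \in C_{\lambda'\mid\mu'}\}$ bijectively onto $\{u' \in R_i \mid u'\sigma_*' \in C_{\lambda'\mid\mu'}\}$, whence $N_i(\sigma_*) = N_i(\sigma_*')$, as desired.

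There is no genuine obstacle here; the argument is the standard conjugation-invariance trick. The only point that truly relies on the structure theory developed above is the claim that the set $R_i$ over which $u$ ranges is closed under conjugation by an arbitrary $x \in B_n$ — and this holds precisely because $R_1$ and $R_2$ are each a whole conjugacy class of $B_n$, so conjugation by $x$ merely permutes $R_i$ rather than sending some reflection outside it.
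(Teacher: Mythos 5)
Your proof is correct and is essentially the same argument as the paper's: both conjugate $\sigma_*$ by $x \in B_n$ and use the conjugation map on reflections (yours $u \mapsto xux^{-1}$, the paper's $u \mapsto x^{-1}ux$, which are inverse bijections) together with the invariance of the conjugacy classes $C_{\lambda'\mid\mu'}$ and of the reflection classes under conjugation. Your explicit remark that $R_1$ and $R_2$ are whole conjugacy classes, so conjugation merely permutes them, is a point the paper leaves implicit, but the route is identical.
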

		
		\begin{proof}
			Denote by $S_1(\sigma_*;\lambda',\mu') \subset
			C_{1^{n-2}2^1\mid\emptyset}$ the set of reflections $u$ such that
			$u\sigma_* \in C_{\lambda'\mid\mu'}$. Take $\sigma' = x\sigma_*
			x^{-1} \in C_{\lambda\mid\mu}$ where $x \in B_n$.  If $u \in
			S(\sigma';\lambda',\mu')$ then $u \sigma' = u x\sigma_* x^{-1} = x
			((x^{-1}ux) \sigma_*) x^{-1} \in C_{\lambda'\mid\mu'}$, which is
			equivalent to $(x^{-1}ux) \sigma_* \in C_{\lambda'\mid\mu'}$, that
			is, to $x^{-1}ux \in S(\sigma_*;\lambda',\mu')$. Thus, conjugation
			$u \mapsto x^{-1}ux$ is a one-to-one map sending
			$S_1(\sigma';\lambda',\mu')$ to $S_1(\sigma_*;\lambda',\mu')$, and
			therefore these two sets contain the same number of elements
			$\langle\lambda,\mu \mid \lambda',\mu'\rangle_1$. The reasoning for
			the multiplicity $\langle\lambda,\mu \mid \lambda',\mu'\rangle_2$ is
			the same.
		\end{proof}
		
		\begin{theorem}\label{Th:TViaMult}
			$T_i\CSum{\lambda}{\mu} = \sum_{\lambda',\mu'} \langle \lambda,\mu
			\mid \lambda',\mu'\rangle_i \CSum{\lambda'}{\mu'}$ for $i = 1,2$.
		\end{theorem}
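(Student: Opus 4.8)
The plan is to exploit that each $\mathcal T_i$ is itself a (non-normalized) conjugacy class sum, hence central, so that $\mathcal T_i\CSum{\lambda}{\mu}$ stays in $Z[B_n]$ and expands in the basis of normalized class sums; the coefficients are then extracted by a well-chosen linear functional. Concretely, I would first recall from \eqref{Eq:DefT1} and the display after it that $\mathcal T_1 = \#C_{2 1^{n-2}\mid\emptyset}\,\CSum{1^{n-2}2}{\emptyset}$ is the sum $\sum_u u$ of all \emph{distinct} reflections $u=r_{ij}$ (the factor $\frac12$ absorbing $r_{ij}=r_{i+n,j+n}$), and likewise $\mathcal T_2=\sum_u u$ runs over all distinct $u=l_i$. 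Since the normalized class sums form a basis of $Z[B_n]$ (Proposition \ref{Pp:ConjClass}) and the product of the central elements $\mathcal T_i$ and $\CSum{\lambda}{\mu}$ is again central, we may write
\begin{equation*}
\mathcal T_i\CSum{\lambda}{\mu} = \sum_{\lambda',\mu'} d^{(i)}_{\lambda',\mu'}\CSum{\lambda'}{\mu'}
\end{equation*}
for uniquely determined scalars $d^{(i)}_{\lambda',\mu'}$, and it remains only to identify these with the multiplicities.

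To read off $d^{(i)}_{\lambda',\mu'}$ I would introduce the linear functional $\varepsilon_{\lambda'\mid\mu'}\colon\Complex[B_n]\to\Complex$ sending $\sum_g c_g g$ to $\sum_{g\in C_{\lambda'\mid\mu'}} c_g$, that is, the total of the coefficients over the class $C_{\lambda'\mid\mu'}$. Because distinct classes are disjoint and $\CSum{\lambda''}{\mu''}$ carries coefficient $1/\#C_{\lambda''\mid\mu''}$ on its own class and $0$ elsewhere, one checks immediately that $\varepsilon_{\lambda'\mid\mu'}(\CSum{\lambda''}{\mu''}) = \delta_{(\lambda',\mu'),(\lambda'',\mu'')}$. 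Hence $\varepsilon_{\lambda'\mid\mu'}$ applied to the displayed expansion returns exactly $d^{(i)}_{\lambda',\mu'}$.

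The heart of the argument is to compute the same functional directly from the product. Writing $\CSum{\lambda}{\mu}=\frac{1}{\#C_{\lambda\mid\mu}}\sum_{\sigma\in C_{\lambda\mid\mu}}\sigma$ and $\mathcal T_i=\sum_u u$, one obtains
\begin{equation*}
\varepsilon_{\lambda'\mid\mu'}\!\left(\mathcal T_i\CSum{\lambda}{\mu}\right) = \frac{1}{\#C_{\lambda\mid\mu}}\,\#\{(u,\sigma): u\sigma\in C_{\lambda'\mid\mu'}\},
\end{equation*}
since each product $u\sigma$ is a single group element contributing $1$ to $\varepsilon_{\lambda'\mid\mu'}$ precisely when it lands in $C_{\lambda'\mid\mu'}$ (here $u$ ranges over the reflections summed in $\mathcal T_i$ and $\sigma$ over $C_{\lambda\mid\mu}$). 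For a fixed $\sigma\in C_{\lambda\mid\mu}$ the number of admissible $u$ is, by definition, the multiplicity $\langle\lambda,\mu\mid\lambda',\mu'\rangle_i$, and Lemma \ref{Lm:Conj} guarantees this count is the same for every $\sigma$. Summing over the $\#C_{\lambda\mid\mu}$ choices of $\sigma$ cancels the prefactor and leaves $d^{(i)}_{\lambda',\mu'} = \langle\lambda,\mu\mid\lambda',\mu'\rangle_i$, which is the claim; the two reflection types $i=1,2$ are handled verbatim.

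I expect the computation to be essentially bookkeeping, with the one genuinely delicate choice being the use of the \emph{class-summing} functional $\varepsilon_{\lambda'\mid\mu'}$ rather than the coefficient of a single fixed element $\sigma_*\in C_{\lambda'\mid\mu'}$. The latter would instead produce $\frac{\#C_{\lambda'\mid\mu'}}{\#C_{\lambda\mid\mu}}\langle\lambda',\mu'\mid\lambda,\mu\rangle_i$, forcing one to invoke the reciprocity $\#C_{\lambda\mid\mu}\,\langle\lambda,\mu\mid\lambda',\mu'\rangle_i = \#C_{\lambda'\mid\mu'}\,\langle\lambda',\mu'\mid\lambda,\mu\rangle_i$ (itself provable by double-counting triples $(u,\sigma,\sigma')$ with $u\sigma=\sigma'$ and using that $u$ is an involution). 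Working with $\varepsilon_{\lambda'\mid\mu'}$ avoids this detour and delivers the multiplicities in the correct order at once.
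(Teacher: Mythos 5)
Your proof is correct and takes essentially the same route as the paper: both expand $T_i\CSum{\lambda}{\mu}$ as a sum over pairs $(u,\sigma)$ with $u$ a reflection and $\sigma\in C_{\lambda\mid\mu}$, count for each target class $C_{\lambda'\mid\mu'}$ the pairs with $u\sigma\in C_{\lambda'\mid\mu'}$, and invoke Lemma \ref{Lm:Conj} to replace the per-$\sigma$ count by the multiplicity $\langle\lambda,\mu\mid\lambda',\mu'\rangle_i$. The only difference is presentational: your use of centrality of $\mathcal T_i\CSum{\lambda}{\mu}$ together with the dual functional $\varepsilon_{\lambda'\mid\mu'}$ is a rigorous packaging of the coefficient extraction that the paper performs by loosely declaring the average over $\sigma$ to be ``an arithmetical mean of identical summands,'' so your version is, if anything, the more carefully justified one.
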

		
		\begin{proof}
			\begin{equation}\label{Eq:ActionT}
				T_1 \CSum{\lambda}{\mu} = \frac{1}{\#C_{\lambda\mid\mu}}
				\sum_{\sigma \in C_{\lambda\mid\mu}} T_1\sigma =
				\frac{1}{\#C_{\lambda\mid\mu}} \sum_{\sigma \in
					C_{\lambda\mid\mu}}\sum_{u \in C_{1^{n-2}2^1\mid\varnothing}} u
				\sigma .
			\end{equation}
			It follows from Lemma \ref{Lm:Conj} that \eqref{Eq:ActionT} is an
			arithmetical mean of identical summands, so it is equal to each of
			them:
			\begin{equation*}
				T_1 \CSum{\lambda}{\mu} = \sum_{\lambda',\mu'} \sum_{\substack
					{u \in C_{1^{n-2}2^1 \mid \varnothing}\\
						u\sigma_* \in  C_{\lambda'\mid\mu'}}}
				u \sigma_* 
			\end{equation*}
			for any fixed $\sigma_* \in C_{\lambda\mid\mu}$. Using Lemma
			\ref{Lm:Conj} again, one obtains
			\begin{align*}
				T_1 \CSum{\lambda}{\mu} &= \sum_{\lambda',\mu'} \sum_{\substack{
						u \in C_{1^{n-2}2^1 \mid \varnothing}\\
						u\sigma_* \in C_{\lambda'\mid\mu'}}}
				\frac{1}{\#C_{\lambda'\mid\mu'}}
				\sum_{\tau \in C_{\lambda'\mid\mu'}} \tau\\
				&= \sum_{\lambda',\mu'} \#(\{u \in C_{1^{n-2}2^1 \mid\varnothing}:
				u\sigma_* \in C_{\lambda'\mid\mu'}\} \CSum{\lambda'}{\mu'}\\
				&= \sum_{\lambda',\mu'} \langle \lambda,\mu \mid
				\lambda',\mu'\rangle \CSum{\lambda'}{\mu'}.
			\end{align*}
			The proof for $T_2$ is the same with $C_{1^{n-1}\mid 1}$ instead of
			$C_{1^{n-2}2^1\mid \varnothing}$.
		\end{proof}
		
		\subsection{Generating function}
		
		Consider the following generating function for Hurwitz numbers of the
		group $B_n$:
		\begin{equation*}
			\mathcal{H}^B(\beta,\gamma,p,q) = \sum_{m,\ell}\sum_{\lambda,\mu}
			\frac{ h_{m,\ell,\lambda,\mu}^B}{m!\ell!}
			p_{\lambda}q_{\mu}\beta^m\gamma^\ell.
		\end{equation*}
		
		\begin{theorem}\label{Th:CJequ}
			$\mathcal{H}^B$ satisfies the {\em cut-and-join equations}
			\begin{equation}\label{Eq:CJ}
				\pder{\mathcal{H}^B}{\beta} = \CJ_1(\mathcal{H}^B) \quad \text{and}
				\quad \pder{\mathcal{H}^B}{\gamma} = \CJ_2(\mathcal{H}^B)
			\end{equation}
		\end{theorem}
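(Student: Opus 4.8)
The plan is to derive the cut-and-join equations by differentiating the generating function $\mathcal{H}^B$ and matching the result against the action of the operators $\CJ_i$, using Theorem~\ref{Th:TViaMult} as the bridge between the combinatorics of counting reflection sequences and the algebra of the operators $T_i$ on $Z[B_n]$. The key structural fact I would exploit is that appending one more reflection of a given class to a sequence corresponds exactly to left-multiplication by $\mathcal T_1$ or $\mathcal T_2$ in the group algebra, so the $\beta$- and $\gamma$-derivatives of the generating function are governed by $T_1$ and $T_2$ respectively.

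\medskip

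First I would unwind the definition of the Hurwitz numbers in terms of class sums. By Definition~\ref{Df:HurwNumb}, $n!\, h_{m,\ell,\lambda,\mu}^B$ counts sequences $(\sigma_1,\dots,\sigma_{m+\ell})$ of the prescribed profile. A sequence of profile $(\lambda,\mu,m,\ell)$ is, up to reordering the choice of which slots carry $r$-type versus $l$-type reflections, built by multiplying $m$ elements from $C_{1^{n-2}2^1\mid\varnothing}$ and $\ell$ elements from $C_{1^{n-1}\mid 1}$. The crucial observation is that the number of such sequences whose product lands in $C_{\lambda\mid\mu}$ equals the coefficient of $\CSum{\lambda}{\mu}$ in the element $(2\mathcal T_1)^m (2\mathcal T_2)^\ell$ expanded in the class-sum basis, where the factor of $2$ accounts for the doubling in \eqref{Eq:DefT1} (each reflection $r_{ij}=r_{i+n,j+n}$ and $l_i=l_{i+n}$ appears twice). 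I would therefore rewrite $\mathcal{H}^B$ as a sum over $m,\ell$ of $\frac{\beta^m\gamma^\ell}{m!\ell!}$ times the image under $\Psi$ of $T_1^m T_2^\ell$ applied to the identity class sum $\CSum{1^n}{\varnothing}$, carefully tracking the normalization by $\#C_{\lambda\mid\mu}$ implicit in the class-sum basis and the $\frac{1}{n!}$ in the definition.

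\medskip

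Next I would differentiate. Applying $\pder{}{\beta}$ to $\mathcal{H}^B$ lowers the power of $\beta$ by one and cancels the factorial, producing a sum whose inner algebraic factor is $T_1 \cdot (T_1^m T_2^\ell)$ applied to $\CSum{1^n}{\varnothing}$. Since $\Psi$ intertwines multiplication by $\mathcal T_i$ with $\CJ_i$ by the defining commutative diagram \eqref{Eq:CJViaGAlg}, and since $\CJ_1$ is a linear operator on $\Complex[p,q]_n$ extended diagonally across degrees, the extra factor of $T_1$ becomes an application of $\CJ_1$ to the entire remaining sum, which is precisely $\CJ_1(\mathcal{H}^B)$. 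The argument for $\pder{}{\gamma}=\CJ_2(\mathcal{H}^B)$ is identical with the roles of $T_1,\mathcal T_1,\CJ_1$ replaced by $T_2,\mathcal T_2,\CJ_2$. Commutativity of $T_1$ and $T_2$ (noted in the text) guarantees the mixed expansion $T_1^m T_2^\ell$ is unambiguous.

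\medskip

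\textbf{The main obstacle} I anticipate is bookkeeping rather than conceptual: one must verify that the combinatorial count in Definition~\ref{Df:HurwNumb}, where $r$- and $l$-reflections occupy interleaved positions in a single sequence of length $m+\ell$, genuinely factors as $T_1^m T_2^\ell$ with the $\frac{1}{m!\ell!}$ weighting absorbing the choice of positions and the ordering within each type. Concretely, I would check that because $\mathcal T_1$ and $\mathcal T_2$ are central and commute, the product $\mathcal T_1^m \mathcal T_2^\ell$ already sums over all $\binom{m+\ell}{m}$ interleavings with the correct multiplicity, so that the exponential generating function in $\beta,\gamma$ converts the noncommutative-looking sequence count into the clean product of operator exponentials $\exp(\beta\,\CJ_1)\exp(\gamma\,\CJ_2)$ acting on $p_{1^n}$. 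Once this factorization is confirmed, the two cut-and-join equations follow by term-by-term differentiation, and no further analytic input is needed since everything takes place in the finite-dimensional spaces $\Complex[p,q]_n$ for each fixed $n$.
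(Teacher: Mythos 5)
Your proposal takes essentially the same route as the paper's own proof: the paper also packages the counts into the group-algebra-valued generating element $\mathcal G_n=\sum_{m,\ell}\frac{\beta^m\gamma^\ell}{m!\ell!}\,T_1^mT_2^\ell(e_n)$, observes that $T_1(\mathcal G_n)=\pder{\mathcal G_n}{\beta}$ and $T_2(\mathcal G_n)=\pder{\mathcal G_n}{\gamma}$, and then transports these identities through $\Psi$ using the defining diagram \eqref{Eq:CJViaGAlg}. So in outline your argument is the paper's argument.

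Two corrections are needed, one of substance. Your ``crucial observation'' carries a wrong constant: the sequence count is \emph{not} the coefficient of $\CSum{\lambda}{\mu}$ in $(2\mathcal T_1)^m(2\mathcal T_2)^\ell$. The factor $\frac{1}{2}$ in \eqref{Eq:DefT1} is there precisely to cancel the double indexing $r_{ij}=r_{i+n,j+n}$ and $l_i=l_{i+n}$, so $\mathcal T_1$ and $\mathcal T_2$ are already the sums over the distinct reflections, each taken exactly once; the relevant element is $\mathcal T_1^m\mathcal T_2^\ell$ with no powers of $2$. You silently drop the $2$'s when you pass to $T_1^mT_2^\ell$ applied to the identity class sum, which is why your conclusion matches the theorem; had you carried them through, you would have derived $\pder{\mathcal H^B}{\beta}=2\CJ_1(\mathcal H^B)$ and $\pder{\mathcal H^B}{\gamma}=2\CJ_2(\mathcal H^B)$, which is false. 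Two smaller points: Theorem \ref{Th:TViaMult} is not actually needed anywhere in your argument (only the diagram \eqref{Eq:CJViaGAlg} defining $\CJ_i$ is used), and your claim that $\mathcal T_1^m\mathcal T_2^\ell$ ``already sums over all $\binom{m+\ell}{m}$ interleavings'' is not literally what centrality gives --- centrality gives that every interleaving contributes the \emph{same} element $\mathcal T_1^m\mathcal T_2^\ell$, so the all-interleavings count of Definition \ref{Df:HurwNumb} acquires an extra factor $\binom{m+\ell}{m}$ that must be reconciled with the $\frac{1}{m!\ell!}$ normalization of $\mathcal H^B$. The paper hides this same point behind the words ``an elementary combinatorial reasoning,'' so your sketch is at parity with it there, but a complete write-up should pin that convention down explicitly.
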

		
		\begin{proof}
			Fix a positive integer $n$ and denote by $\mathcal{H}_n$ a degree
			$n$ homogeneous component of $\mathcal{H}^B$. The cut-and-join
			operators preserve the degree, so $\mathcal{H}^B$ satisfies the
			cut-and-join equations if and only if $\mathcal{H}_n$ does (for each
			$n$).
			
			Let $i$ be $1$ or $2$, and let
			\begin{equation*}
				\mathcal{G}_n \bydef \sum_{m,\ell \ge 0}
				\sum_{\lambda,\mu:\mid\lambda\mid +\mid\mu\mid = n}
				\frac{n!h_{m,\ell,\lambda,\mu}^{B}}{m!\ell!} \CSum{\lambda}{\mu}
				\beta^m \gamma^\ell \in\Complex[B_n]
			\end{equation*}
			An elementary combinatorial reasoning gives
			\begin{equation*}
				\mathcal{G}_n = \sum_{m\geq 0}
				\frac{\beta^m\gamma^\ell}{m!\ell!}T_1^m T_2^\ell(e_n)
			\end{equation*}
			where $e_n \in B_n$ is the unit element. Clearly
			\begin{equation*}
				T_1(\mathcal{G}_n) = \sum_{m\geq 0}
				\frac{\beta^m\gamma^\ell}{m!\ell!}(T_1)^{m+1}(e_{n}) = \sum_{m \ge
					1} \frac{\beta^{m-1}\gamma^\ell}{(m-1)!\ell!} (T_1)^{m}(e_{n}) =
				\pder{\mathcal{G}_n}{\beta}.
			\end{equation*}
			and, similarly, $T_2(\mathcal{G}_n) =
			\pder{\mathcal{G}_n}{\gamma}$. Applying the isomorphism $\Psi$ one
			obtains $\Psi T_1(\mathcal{G}_n) = \Psi(\pder{\mathcal{G}_n}{\beta})
			= \pder{}{\beta} \Psi(\mathcal{G}_n)$.  One has $\Psi(\mathcal{G}_n)
			= \mathcal{H}_n$, hence $\pder{}{\beta} \Psi(\mathcal{G}_n) =
			\pder{\mathcal{H}_n}{\beta}$; similarly, $\pder{}{\gamma}
			\Psi(\mathcal{G}_n) = \pder{\mathcal{H}_n}{\gamma}$. By the
			definition of the cut-and-join operators, $\Psi T_1(\mathcal{G}_n) =
			\CJ_1(\Psi(\mathcal{G}_n)) = \CJ_1({\mathcal{H}_n})$ and the same
			for $T_2$ and $\CJ_2$. Equalities \eqref{Eq:CJ} follow.
		\end{proof}
		
		\begin{corollary}\label{Cr:GenFun}
			\begin{equation}\label{Eq:GenFunc}
				\mathcal{H}^B(\beta,\gamma,p,q) =
				e^{\beta\CJ_1+\gamma\CJ_2}e^{p_1}
			\end{equation}
		\end{corollary}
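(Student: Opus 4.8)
The plan is to recognize $e^{\beta\CJ_1+\gamma\CJ_2}e^{p_1}$ as the unique formal solution of the cut-and-join system of Theorem~\ref{Th:CJequ} carrying the correct initial data, and then to match it against $\mathcal H^B$. First I would record two structural facts. By the defining diagram \eqref{Eq:CJViaGAlg}, each operator $\CJ_i$ equals $\Psi\circ T_i\circ\Psi^{-1}$ on every homogeneous component $\Complex[p,q]_n$; since $T_1$ and $T_2$ commute on $Z[B_n]$, so do $\CJ_1$ and $\CJ_2$. Moreover the $\CJ_i$ preserve the grading, acting within each finite-dimensional $\Complex[p,q]_n$, so $e^{\beta\CJ_1+\gamma\CJ_2}$ is well defined as a formal power series in $\beta,\gamma$ with degree-preserving coefficients, and may be applied term by term to $e^{p_1}=\sum_{n\ge0}p_1^{\,n}/n!$.

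Set $F:=e^{\beta\CJ_1+\gamma\CJ_2}e^{p_1}$. Because $\CJ_1$ and $\CJ_2$ commute, differentiating the exponential gives $\pder{F}{\beta}=\CJ_1F$ and $\pder{F}{\gamma}=\CJ_2F$, while $F|_{\beta=\gamma=0}=e^{p_1}$. By Theorem~\ref{Th:CJequ} the generating function $\mathcal H^B$ satisfies exactly these two equations, so it remains only to match initial values. At $\beta=\gamma=0$ all terms of $\mathcal H^B$ with $m+\ell>0$ vanish; the single sequence with $m=\ell=0$ is the empty one, whose product is the identity $e_n\in B_n$. Its cycle decomposition consists of the $n$ length-one $\alpha$-pairs $(i)(\tau(i))$, whence $e_n\in C_{1^n\mid\emptyset}$, a one-element conjugacy class. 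Thus $h^B_{0,0,\lambda,\mu}=1/n!$ when $(\lambda,\mu)=(1^n,\emptyset)$ and vanishes otherwise, giving $\mathcal H^B|_{\beta=\gamma=0}=\sum_{n\ge0}p_1^{\,n}/n!=e^{p_1}$, which agrees with $F$.

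To conclude by uniqueness, I would consider $D:=\mathcal H^B-F$, which satisfies $\pder{D}{\beta}=\CJ_1D$, $\pder{D}{\gamma}=\CJ_2D$ and $D|_{\beta=\gamma=0}=0$. Writing $D=\sum_{m,\ell\ge0}D_{m,\ell}\,\beta^m\gamma^\ell$ with each $D_{m,\ell}\in\Complex[p,q]$ (in a fixed degree), the two equations yield the recursions $(m+1)D_{m+1,\ell}=\CJ_1D_{m,\ell}$ and $(\ell+1)D_{m,\ell+1}=\CJ_2D_{m,\ell}$; starting from $D_{0,0}=0$, a double induction forces $D_{m,\ell}=0$ for all $m,\ell$. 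Hence $\mathcal H^B=F$, which is \eqref{Eq:GenFunc}.

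I do not expect a serious obstacle here, since Theorem~\ref{Th:CJequ} supplies the differential equations; the only point requiring care is the legitimacy of exponentiating an operator and applying it to the infinite series $e^{p_1}$, and this is entirely controlled by the degree-preserving property of $\CJ_1,\CJ_2$, which confines every computation to a single finite-dimensional component $\Complex[p,q]_n$. Alternatively, one can bypass the uniqueness step by reusing the identity $\mathcal G_n=e^{\beta T_1+\gamma T_2}(e_n)$ established inside the proof of Theorem~\ref{Th:CJequ}: since the identity forms a singleton class one has $e_n=\CSum{1^n}{\emptyset}$, so $\Psi(e_n)=p_1^{\,n}$, and applying $\Psi$ and summing over $n$ reproduces \eqref{Eq:GenFunc} directly.
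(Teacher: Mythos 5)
Your proof is correct and takes essentially the same route as the paper: the paper likewise computes the initial condition $h^B_{0,0,\lambda,\mu}=\frac{1}{n!}$ for $(\lambda,\mu)=(1^n,\emptyset)$ and $0$ otherwise, concludes $\mathcal{H}^B(0,0,p,q)=e^{p_1}$, and then states that \eqref{Eq:GenFunc} follows from Theorem~\ref{Th:CJequ}. The only difference is that you spell out what the paper leaves implicit --- the well-definedness of the operator exponential on each graded component and the uniqueness of the solution of the cut-and-join system with given initial data (your alternative ending via $\mathcal{G}_n=e^{\beta T_1+\gamma T_2}(e_n)$ is likewise just the computation already inside the paper's proof of Theorem~\ref{Th:CJequ}).
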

		
		\begin{proof}
			It follows from Definition \ref{Df:HurwNumb} that
			$h^B_{0,0,\lambda,\mu} = \frac{1}{n!}$ if $\lambda = 1^n$ and
			$\mu=\emptyset$, and $h^B_{0,0,\lambda,\mu} = 0$ otherwise.  Thus
			$\mathcal{H}^B(0,0,p,q) = e^{p_1}$ and \eqref{Eq:GenFunc} follows
			from Theorem \ref{Th:CJequ}.
		\end{proof}
		
		\section{Explicit formulas}\label{Sec:Expl}
		
		\subsection{Multiplication of an element $x \in B_n \subset
			S_{2n}$ by a reflection}\label{SSec:CycleStruct}
		
		Let $\omega \in S_n$ and $1 \le a < b \le n$. The cycle structure of
		the product $\omega' = (ab)\omega$ depend on the cyclic structure of
		$\omega$ and positions of $a$ and $b$ as follows. If $a$ and $b$
		belong to the same cycle $(a \DT, u, b \DT, v)$ of $\omega$, then the
		cycle splits in $\omega'$ into two: $(a \DT, u)$ and $(b \DT, v)$ (``a
		cut''). If $a$ and $b$ belong to different cycles then the opposite
		thing happens: they glue together in $\omega'$ (``a join''). The
		cycles containing neither $a$ nor $b$ are the same for $\omega$ and
		$\omega'$.
		
		Let now $\omega \in B_n \subset S_{2n}$. The cyclic structure of $\omega'
		= \sigma\omega$ where $\sigma$ is a reflection (i.e.\ $\sigma =
		r_{ab} = (ab)(a+n,b+n)$ (addition modulo $n$) for $1 \le a
		< b \le 2n$, or $\sigma = l_a = (a,a+n)$ for $1\leq
		a\leq 2n$) depends on the cyclic structure of $\omega$ ($\alpha$-pairs
		and $\beta$-cycles) and the position of the points $a, b$ as shown in
		Table \ref{Fig:Mult} below.
		
		In this table, hexagons represent $\beta$-cycles and the pairs of
		triangles are $\alpha$-pairs; numbers inside denote the lengths of the
		cycles. The two-headed dashed arrows show the position of $a$ and $b$
		of the reflection $\sigma$. The solid arrows join cyclic structures of
		$\omega$ and $\sigma\omega$ for $\sigma = r_{ab}$, and the empty
		arrows, for $\sigma = l_a$ (recall that $\sigma$ is an involution,
		so the arrows are two-headed). For example, the first diagram shows
		the multiplication by $(a,b)(\t(a),\t(b))$ of a pair of $\beta$-cycles
		containing $a$ and $b$, respectively.
		
		The boxed number on the bottom left corner is the multiplicity
		$\langle\lambda,\mu \mid \lambda',\mu'\rangle$ between conjugacy
		classes containing $\omega$ and $\sigma\omega$.

		\begin{figure} 
			\includegraphics[scale=.6]{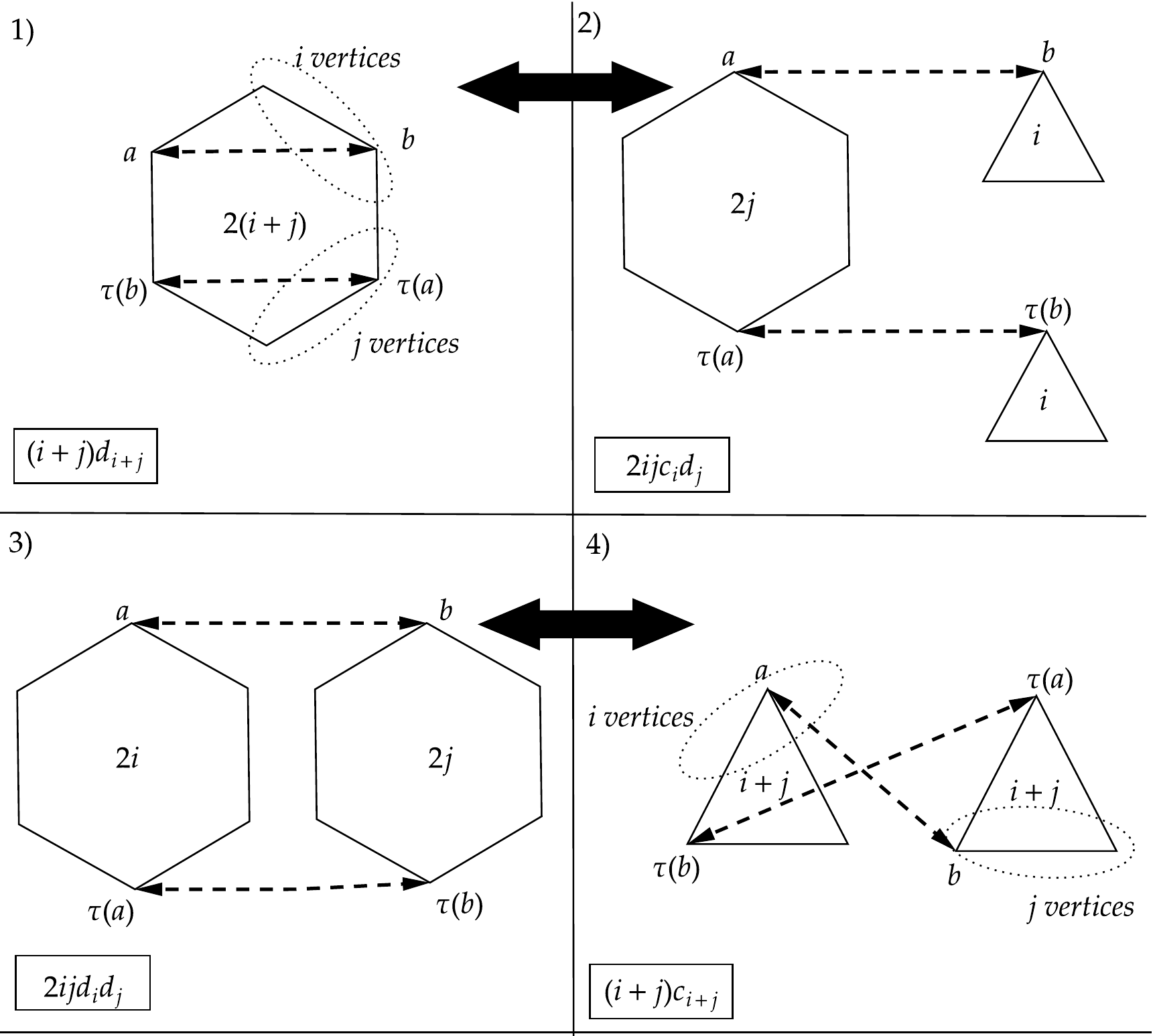}  
			\includegraphics[scale=.6]{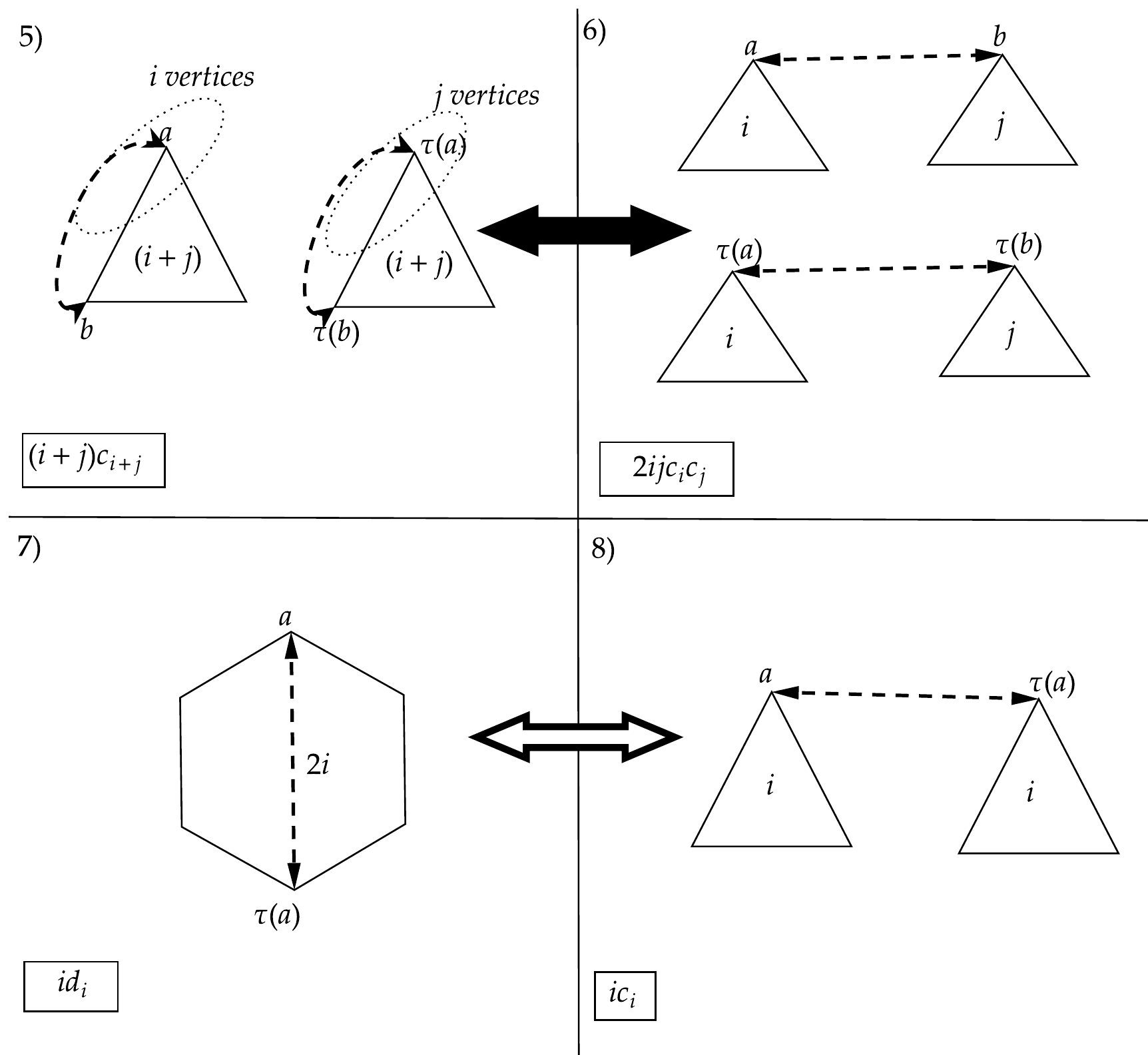}
			\caption{Multiplication of cycles by a reflection}\label{Fig:Mult}
		\end{figure}
		
		\subsection{Multiplicities}\label{SSec:Mult}
		
		Let $\omega \in C_{\lambda\mid\mu}$ where $\lambda = 1^{c_1} \dots n^{c_n}$
		and $\mu = 1^{d_1} \dots n^{d_n}$ --- that is, for every $k = 1 \DT,
		n$ the element $u$ contains $c_k$ $\alpha$-pairs of length $k$ and
		$d_k$ separate $\beta$-cycles of length $2k$. Calculate, for all
		$\lambda' = 1^{c_1'} \dots n^{c_n'}$ and $\mu = 1^{d_1'} \dots
		n^{d_n'}$, the multiplicities $\langle \lambda',\mu' \mid \lambda,
		\mu\rangle_1$ and $\langle \lambda',\mu' \mid \lambda,
		\mu\rangle_2$. In Section \ref{SSec:CycleStruct} we listed cases when
		this multiplicities may be nonzero: these are Cases
		\ref{It:OneBeta}--\ref{It:2join} below for $\langle\;\mid\;\rangle_1$ and
		Cases \ref{It:LCut} and \ref{It:LJoin} for $\langle\;\mid\;\rangle_2$.
		
		{\def \labelenumi {Case \theenumi.}
			
			\begin{enumerate}  
				
				\item\label{It:OneBeta} The number of possible positions for $a$ is
				the total number of elements in all the $\beta$-cycles of length
				$2(i+j)$, that is, $2(i+j) d_{i+j}$. Since one knows the
				lengths of $\alpha$-pairs in $\sigma \omega$, the position of $b$ is
				unique, once the position of $a$ is chosen; the same is for $\t(a)$
				and $\t(b)$. Doing like this, one counts every reflection $\sigma =
				r_{ab}$ twice, so that the multiplicity is $(i+j)d_{i+j}$.
				
				\item Again there are $2i c_i$ possible positions for $a$ and $2j d_j$
				possible positions for $b$; we are to divide by $2$ again by the
				same reason and the multiplicity is $2ijc_i d_j$.
				
				\item Like in Case \ref{It:OneBeta}, the multiplicity is $2ij d_i
				d_j$.  .
				
				\item\label{It:MatchAlphas} The number of possible positions for $a$
				is $2(i+j) c_{i+j}$; the position of $b$ is uniquely determined by
				the position of $a$ like in Case \ref{It:OneBeta}. Again, divide by
				$2$, to obtain the multiplicity $(i+j) c_{i+j}$.
				
				\item Same as case \ref{It:MatchAlphas}, the multiplicity is
				$(i+j) c_{i+j}$.
				
				\item\label{It:2join} The number of possible positions for $a$ is the
				total number of the elements of all the $\alpha$-pairs of length
				$i$, that is, $2i c_i$. Similarly, the number of possible positions
				for $b$ is $2j c_j$, so the multiplicity is $2ij c_i c_j$.
				
				\item\label{It:LCut} The number of possible positions for $a$ is
				$2i d_i$, for the same reason as above, we have to divide by $2$,
				so that the multiplicity is $i d_i$
				
				\item\label{It:LJoin} The number of possible positions for $a$ is
				$2i c_i$, as in the previous cases, we have to divide by $2$, so that
				the multiplicity is $i c_i$
			\end{enumerate}
		}
		
		Write now the terms of $\CJ_1$ and $\CJ_2$ explicitly. It follows from
		Theorem \ref{Th:TViaMult} that $\CJ_i p_\lambda q_\mu =
		\sum_{\lambda',\mu'} \langle\lambda,\mu\mid\lambda',\mu'\rangle_i
		p_{\lambda'} q_{\mu'}$ for $i\in \{1,2\}$.
		
		Let $\lambda',\mu'$ be as at diagram \ref{It:OneBeta} in Figure
		\ref{Fig:Mult}. The monomial $p_\lambda q_\mu$ contains
		$q_{i+j}^{d_{i+j}}$; The exponent at $q_{i+j}$ in the monomial
		$p_{\lambda'} q_{\mu'}$ is less by $1$, and the exponents at $p_i$ and
		$q_j$ are greater by $1$. We will have $q_{i+j} \pder{}{p_i}
		\pder{}{q_j}$ for this; to get a correct multiplicity, put the
		coefficient $(i+j)$ before.
		
		Similar reasoning for the remaining cases gives:
		
		\begin{multline}\label{Eq:CJ_1}
			\CJ_1 = \sum_{i,j=1}^\infty \biggl( (i+j) p_iq_j \pder{}{q_{i+j}} +
			2ij q_{i+j} \pdertwo{}{p_i}{q_j} + ijp_{i+j} \pdertwo{}{q_i}{q_j}\\
			+ \frac{1}{2} (i+j) q_iq_j \pder{}{p_{i+j}} + \frac{1}{2} (i+j)
			p_ip_j \pder{}{p_{i+j}} + ij p_{i+j} \pdertwo{}{p_i}{p_j}\biggr)
		\end{multline}
		and
		\begin{equation}\label{Eq:CJ_2}
			\CJ_2 = \sum_{i=1}^\infty \left(i p_i\frac{\partial}{\partial
				q_i} + i q_i\frac{\partial}{\partial p_i}\right)
		\end{equation}
		
		\subsection{Change of variables}\label{SSec:Change}
		
		In this section we reduce, by a suitable change of variables, the
		operators $\CJ_1$ and $\CJ_2$ to classical cut-and-join operators
		\begin{equation}\label{Eq:CJA}
			\CJ \bydef\sum_{i,j=1}^\infty \left(ijp_{i+j} \pdertwo{}{p_i}{p_j}
			+(i+j) p_ip_j \pder{}{p_{i+j}}\right)  
		\end{equation}
		and Euler fields
		\begin{equation}\label{Eq:Euler}
			E\bydef\sum_{i=1}^\infty ip_i \pder{}{p_i},
		\end{equation}
		respectively.
		
		\begin{proposition}\label{Pp:ChVar}
			Let $u_\ell = \frac{p_\ell+q_\ell}{2}$ and $v_\ell =
			\frac{p_\ell-q_\ell}{2}$. Then
			
			\begin{align*}
				\CJ_1 &= \sum_{i,j=1}^\infty \biggl( iju_{i+j}\pdertwo{}{u_i}{u_j} +
				(i+j) u_iu_j \pder{}{u_{i+j}} + ij v_{i+j} \pdertwo{}{v_i}{v_j}\\
				&\hphantom{\sum_{i,j=1}^\infty \biggl(} + (i+j) v_iv_j \pder{}{v_{i+j}}\biggr) = \CJ_u + \CJ_v,\\
				\CJ_2 &= \sum_{\ell=1}^\infty \ell \left(u_\ell \pder{}{u_\ell} -
				v_\ell \pder{}{v_{\ell}}\right) = E_u - E_v.
			\end{align*}
		\end{proposition}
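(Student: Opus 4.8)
The plan is a direct change of variables in \eqref{Eq:CJ_1} and \eqref{Eq:CJ_2}, where the only real content is a cancellation of mixed terms forced by the symmetry of the double sum. First I would invert the substitution as $p_\ell = u_\ell + v_\ell$ and $q_\ell = u_\ell - v_\ell$, and apply the chain rule. Since $u_\ell$ and $v_\ell$ depend only on $p_\ell$ and $q_\ell$, with the coefficients displayed in the statement, this gives $\pder{}{p_\ell} = \frac12\bigl(\pder{}{u_\ell} + \pder{}{v_\ell}\bigr)$ and $\pder{}{q_\ell} = \frac12\bigl(\pder{}{u_\ell} - \pder{}{v_\ell}\bigr)$.

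The operator $\CJ_2$ is the easy case: substituting the relations above into \eqref{Eq:CJ_2}, the $i$-th summand $i p_i \pder{}{q_i} + i q_i \pder{}{p_i}$ becomes $\frac{i}{2}\bigl[(u_i+v_i)\bigl(\pder{}{u_i}-\pder{}{v_i}\bigr) + (u_i-v_i)\bigl(\pder{}{u_i}+\pder{}{v_i}\bigr)\bigr]$. Expanding, the mixed monomials $u_i\pder{}{v_i}$ and $v_i\pder{}{u_i}$ cancel termwise, leaving $i\bigl(u_i\pder{}{u_i} - v_i\pder{}{v_i}\bigr)$; summation over $i$ yields $E_u - E_v$, matching \eqref{Eq:Euler}.

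For $\CJ_1$ I would substitute into all six summands of \eqref{Eq:CJ_1}, grouping them into the three \emph{join} terms (those carrying a single derivative) and the three \emph{cut} terms (those carrying two derivatives). In each group I replace every $p$, every $q$, and every derivative by its $u,v$-expression and expand. The pure-$u$ contributions of the join group sum to $\sum_{i,j}(i+j)u_iu_j\pder{}{u_{i+j}}$ and those of the cut group to $\sum_{i,j}ij\,u_{i+j}\pdertwo{}{u_i}{u_j}$; together these form the classical operator \eqref{Eq:CJA} in the variables $u$, that is, $\CJ_u$, and the pure-$v$ contributions assemble symmetrically into $\CJ_v$.

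The crux is the vanishing of the mixed $u,v$-terms. After expanding, the coefficient of $\pder{}{u_{i+j}}$ in the join group turns out to be $u_iu_j + \frac12\bigl(v_iu_j - u_iv_j\bigr)$, so its mixed part is the antisymmetric combination $v_iu_j - u_iv_j$; analogously the mixed part of the cut group appears as $\pder{}{v_i}\pder{}{u_j} - \pder{}{u_i}\pder{}{v_j}$. Because the accompanying scalars $(i+j)$ and $ij$ and the prefactors $u_{i+j}, v_{i+j}$ are all symmetric under $i \leftrightarrow j$, each antisymmetric combination cancels against its $i\leftrightarrow j$ transpose once summed over $i,j \ge 1$. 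The one step demanding attention is checking that the three terms in each group combine so that the surviving symmetric parts carry coefficient exactly $1$ on the pure terms — this is precisely where the factors of $\frac12$ in \eqref{Eq:CJ_1} are consumed — but beyond this bookkeeping there is no genuine obstacle.
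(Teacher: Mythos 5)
Your proposal is correct and takes essentially the same route as the paper, whose entire proof is the single sentence ``The proof is a direct computation'' --- you have simply supplied that computation. The chain-rule substitution, the assembly of the pure $u$- and pure $v$-contributions into $\CJ_u+\CJ_v$ (with the factors of $\frac12$ absorbed as you describe), and the cancellation of the antisymmetric mixed terms under the symmetric sum over $i,j$ all check out.
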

		\noindent where by $\CJ_u$ and $\CJ_v$ we denote the operator \eqref{Eq:CJA}
		with $u_i$ (resp. $v_i$) substituted for $p_i$, and similarly, $E_u$
		and $E_v$.
		
		The proof is a direct computation.
		
		\begin{theorem}\label{Th:B-Schur}
			For all partitions $\lambda$ and $\mu$ the polynomials
			\begin{equation*}
				s_{\lambda\mid\mu}(p,q) \bydef s_\lambda((p+q)/2) s_\mu((p-q)/2)
			\end{equation*}  
			where $s_\lambda$ and $s_\mu$ are Schur polynomials and $(p \pm
			q)/2$ means $((p_1 \pm q_1)/2, (p_2 \pm q_2)/2, \dots)$, are
			eigenvectors of both $\CJ_1$ and $\CJ_2$; the respective eigenvalues
			are $\sum\limits_{i=1}^\infty \left(\lambda_i(\lambda_i-2i+1) +
			\mu_i(\mu_i-2i+1)\right)$ for $\CJ_1$ and $\sum\limits_{i=1}^\infty
			(\lambda_i - \mu_i)$ for $\CJ_2$.
		\end{theorem}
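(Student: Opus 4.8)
The plan is to reduce the statement to the classical eigenvalue property of Schur polynomials under the ordinary cut-and-join operator, by means of the change of variables of Proposition \ref{Pp:ChVar}. Setting $u_\ell = (p_\ell+q_\ell)/2$ and $v_\ell = (p_\ell-q_\ell)/2$, the very definition of $s_{\lambda\mid\mu}$ reads
\[
  s_{\lambda\mid\mu}(p,q) = s_\lambda(u)\,s_\mu(v),
\]
so that in the new coordinates the proposed eigenvector factors as a Schur polynomial in the variables $u_\ell$ times a Schur polynomial in the variables $v_\ell$. This factorization across the two disjoint families of variables is the crux of the whole argument.

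Next I would invoke Proposition \ref{Pp:ChVar} to rewrite $\CJ_1 = \CJ_u + \CJ_v$ and $\CJ_2 = E_u - E_v$, where $\CJ_u,E_u$ involve only the $u_\ell$ and $\CJ_v,E_v$ only the $v_\ell$. Since the two sets of variables are disjoint, $\CJ_u$ and $E_u$ act on $s_\lambda(u)\,s_\mu(v)$ by leaving the factor $s_\mu(v)$ untouched, and symmetrically $\CJ_v,E_v$ leave $s_\lambda(u)$ untouched. Thus the computation splits completely into two independent copies of the classical problem, and it suffices to know how the operators \eqref{Eq:CJA} and \eqref{Eq:Euler} act on a single Schur polynomial.

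At this point I would cite the classical result (see \cite{LandoKazarian}): the Schur polynomial $s_\lambda$ is an eigenvector of the cut-and-join operator \eqref{Eq:CJA} with eigenvalue $\sum_i \lambda_i(\lambda_i-2i+1)$ --- equivalently, twice the sum of the contents of the Young diagram of $\lambda$ --- and an eigenvector of the Euler field \eqref{Eq:Euler} with eigenvalue $|\lambda| = \sum_i \lambda_i$, the latter simply because $E$ records the total degree and $s_\lambda$ is homogeneous of degree $|\lambda|$ in the grading $\deg p_i = i$. Applying these two facts to $s_\lambda(u)$ and $s_\mu(v)$ separately and adding, I obtain
\[
  \CJ_1\, s_{\lambda\mid\mu}
    = \sum_{i=1}^\infty\bigl(\lambda_i(\lambda_i-2i+1)+\mu_i(\mu_i-2i+1)\bigr)\,s_{\lambda\mid\mu},
\]
and, using that $\CJ_2 = E_u - E_v$ contributes $+|\lambda|$ from the $u$-factor and $-|\mu|$ from the $v$-factor,
\[
  \CJ_2\, s_{\lambda\mid\mu} = (|\lambda|-|\mu|)\,s_{\lambda\mid\mu}
    = \sum_{i=1}^\infty(\lambda_i-\mu_i)\,s_{\lambda\mid\mu},
\]
which are exactly the asserted eigenvalues.

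Once Proposition \ref{Pp:ChVar} is established there is no genuine obstacle here: the argument is pure separation of variables resting on the classical Schur eigenvalue. The only points requiring mild care are the sign bookkeeping in $\CJ_2 = E_u - E_v$ --- one must check that the defining formula $s_\lambda((p+q)/2)\,s_\mu((p-q)/2)$ attaches $\lambda$ to the ``plus'' variables $u_\ell$ and $\mu$ to the ``minus'' variables $v_\ell$, so that $\mu$ picks up the minus sign --- and the verification that the eigenvalue $\sum_i\lambda_i(\lambda_i-2i+1)$ of \cite{LandoKazarian} is written in the same normalization as the operator \eqref{Eq:CJA} used here.
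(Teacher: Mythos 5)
Your proposal is correct and follows exactly the paper's own argument: the paper likewise deduces the theorem immediately from Proposition \ref{Pp:ChVar} (the change of variables $\CJ_1 = \CJ_u + \CJ_v$, $\CJ_2 = E_u - E_v$) together with the classical facts that Schur polynomials are eigenvectors of the cut-and-join operator \eqref{Eq:CJA} and of the Euler field \eqref{Eq:Euler}. Your write-up merely spells out the separation-of-variables step and the sign bookkeeping that the paper leaves implicit.
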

		
		The theorem follows immediately from Proposition \ref{Pp:ChVar} and
		the classical fact that Schur polynomials are eigenvectors of the
		cut-and-join operator \eqref{Eq:CJA} (see \cite{LandoKazarian}) and of
		the Euler field \eqref{Eq:Euler} ($s_{\lambda\mid\mu}$ are weighted
		homogeneous).
		
		Theorem \ref{Th:B-Schur} and the Cauchy identity \cite{Macdonald}
		imply that
		\begin{equation*}
			e^{p_1} = e^{u_1} e^{v_1} = \sum_{\lambda,\mu}
			s_{\lambda\mid\mu}(p,q) s_{\lambda\mid\mu}(1, 0, \dots; 1, 0, \dots)
		\end{equation*}
		
		This allows us to prove the following
		
		\begin{corollary}[of Theorem \ref{Th:B-Schur}]\label{Cr:LKGenFunc}
			\begin{align*}
				\mathcal{H}^B(\beta,\gamma,p,q) &= \sum_{\lambda,\mu}
				\exp\bigl(\beta \sum_{i=1}^\infty (\lambda_i(\lambda_i-2i+1) +
				\mu_i(\mu_i-2i+1) + \gamma \sum_{i=1}^{\infty}
				(\lambda_i-\mu_i)\bigr)\\
				&\times s_{\lambda\mid\mu}(1,0,\dots; 1,0,\dots)
				s_{\lambda\mid\mu}(p,q)
			\end{align*}
		\end{corollary}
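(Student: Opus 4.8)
The plan is to combine the operator form of the generating function provided by Corollary~\ref{Cr:GenFun} with the spectral information supplied by Theorem~\ref{Th:B-Schur}. By Corollary~\ref{Cr:GenFun} one has $\mathcal{H}^B = e^{\beta\CJ_1+\gamma\CJ_2}e^{p_1}$, and immediately above the present statement the Cauchy identity has already been used to expand the initial datum as $e^{p_1}=\sum_{\lambda,\mu} s_{\lambda\mid\mu}(p,q)\, s_{\lambda\mid\mu}(1,0,\dots;1,0,\dots)$. So the entire problem reduces to computing the action of $e^{\beta\CJ_1+\gamma\CJ_2}$ on each basis element $s_{\lambda\mid\mu}$, after which I would simply substitute and reassemble the sum.

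First I would record that $\CJ_1$ and $\CJ_2$ commute. This follows from the defining diagram \eqref{Eq:CJViaGAlg} together with the observation, noted just after the definition of $\mathcal T_1,\mathcal T_2$, that the multiplication operators $T_1,T_2$ commute on $Z[B_n]$ (they are multiplications by central elements): conjugating by the isomorphism $\Psi$ transports the commutation relation to $\CJ_1,\CJ_2$. Consequently $e^{\beta\CJ_1+\gamma\CJ_2}=e^{\beta\CJ_1}e^{\gamma\CJ_2}$, so the two exponentials may be applied in either order without ambiguity.

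Next I would exploit Theorem~\ref{Th:B-Schur}: each $s_{\lambda\mid\mu}$ is a \emph{simultaneous} eigenvector of $\CJ_1$ and $\CJ_2$, with eigenvalues $a_{\lambda,\mu}\bydef\sum_{i} \bigl(\lambda_i(\lambda_i-2i+1)+\mu_i(\mu_i-2i+1)\bigr)$ and $b_{\lambda,\mu}\bydef\sum_i(\lambda_i-\mu_i)$ respectively. Hence $s_{\lambda\mid\mu}$ is an eigenvector of $\beta\CJ_1+\gamma\CJ_2$ with eigenvalue $\beta a_{\lambda,\mu}+\gamma b_{\lambda,\mu}$, and therefore $e^{\beta\CJ_1+\gamma\CJ_2} s_{\lambda\mid\mu}=\exp(\beta a_{\lambda,\mu}+\gamma b_{\lambda,\mu})\, s_{\lambda\mid\mu}$. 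Applying this term by term to the Cauchy expansion of $e^{p_1}$ yields precisely the claimed formula.

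The only point genuinely requiring care — and the place I would be most deliberate — is the legitimacy of applying the transcendental operator $e^{\beta\CJ_1+\gamma\CJ_2}$ term by term to an infinite sum and of treating it as a power series acting on each eigenvector. This is harmless because everything is graded: $\CJ_1$ and $\CJ_2$ preserve the total degree, since every monomial in \eqref{Eq:CJ_1} and \eqref{Eq:CJ_2} is degree-preserving. Thus on each finite-dimensional homogeneous component $\Complex[p,q]_n$ they are ordinary commuting linear operators admitting the $s_{\lambda\mid\mu}$ with $\lmod\lambda\rmod+\lmod\mu\rmod=n$ as a basis of simultaneous eigenvectors; there the exponential is an honest matrix exponential acting diagonally in this basis, and summing over $n$ reassembles the full series. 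No convergence issue beyond the formal-power-series level arises, so I expect the argument to be short once the commutation and the eigenvector expansion are in place.
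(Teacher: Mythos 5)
Your proposal is correct and follows the paper's own route exactly: the paper derives this corollary by combining Corollary \ref{Cr:GenFun} with the Cauchy-identity expansion of $e^{p_1}$ in the $s_{\lambda\mid\mu}$ basis and then applying the exponential diagonally via the eigenvalues from Theorem \ref{Th:B-Schur}. Your additional remarks on commutativity of $\CJ_1,\CJ_2$ and on degree-preservation justifying the formal term-by-term application are sound (and the eigenvector argument does not even require the commutativity), so nothing is missing.
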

		This is the B-analog of the formula expressing the classical Hurwitz
		numbers via the Schur polynomials \cite{LandoKazarian}
		
		Theorem \ref{Th:B-Schur} allows also to establish a direct relation
		between Hurwitz numbers of the groups $B_n$
		($h_{m,\ell,\lambda,\mu}^B$, studied here) and $S_n$ (the classical
		ones, denoted $h_{m,\lambda}^A$ till the end of this section).
		
		Namely, it follows from equation \ref{Eq:CJ_2} that $\CJ_2(p_\lambda
		q_\mu)= (|\lambda|+|\mu|)p_\lambda q_\mu$, and therefore, by the
		cut-and-join equation,
		\begin{multline*}
			\sum_{m,\ell}\sum_{\lambda,\mu}
			\frac{h_{m,\ell,\lambda,\mu}^B}{m!(\ell-1)!}
			p_{\lambda}q_{\mu}\beta^m\gamma^{\ell-1} = \CJ_2(\mathcal{H}^B) =
			\pder{\mathcal{H}^B}{\gamma}\\
			= \sum_{m,\ell}\sum_{\lambda,\mu} (\lmod\lambda\rmod +
			\lmod\mu\rmod) \frac{h_{m,\ell,\lambda,\mu}^B}{m!\ell!}
			p_{\lambda}q_{\mu}\beta^m\gamma^\ell.
		\end{multline*}
		This implies the equality 
		\begin{equation*}
			h_{m,\ell,\lambda,\mu}^B = (\lmod\lambda\rmod + \lmod\mu\rmod)^\ell
			h_{m,0,\lambda,\mu}^B
		\end{equation*}
		It is thus enough to focus on $h_{m,0,\lambda,\mu}^B$.
		
		By Corollary \ref{Cr:LKGenFunc}, $h_{m,0,\lambda,\mu}^B$ is the
		coefficient at the monomial $p_\lambda q_\mu\beta^m$ in the expression
		\begin{equation*}
			\sum_{\lambda',\mu',m_1,m_2}
			h_{m_1,\lambda'}^A(p+q)_{\lambda'}  
			h_{m_1,\mu'}^A(p+q)_{\mu'}\beta^{m_1+m_2}
			\frac{m!(\lmod\lambda\rmod + \lmod\mu\rmod)!}{2^{\#\lambda
					+\#\mu}\lmod\lambda'\rmod! m_1! \lmod\mu'\rmod! m_2!}
		\end{equation*}
		where $(p\pm q)_\lambda$ is understood as $(p_{\lambda_1}\pm
		q_{\lambda_1}) \dots (p_{\lambda_s}\pm q_{\lambda_s})$. The
		coefficient may be nonzero only if two conditions are satisfied:
		$m_1+m_2=m$ and $\lambda'+\mu'=\lambda+\mu$. Thus the relation can be
		rewritten as
		\begin{equation*}
			h_{m,0,\lambda,\mu}^B =
			\sum_{\substack{\lambda'+\mu'=\lambda+\mu\\m_1+m_2=m}}
			\frac{h_{m_1,\lambda'}^Ah_{m_2,\mu'}^A}{2^{\#\lambda +\#\mu}}
			\binom{m}{m_1}\binom{|\lambda|+|\mu|}{|\lambda'|} [p_\lambda q_\mu
			:(p+q)_{\lambda'}(p-q)_{\mu'}]
		\end{equation*}
		
		Now let $\lambda'=(1^{\alpha_1},2^{\alpha_2},\dots)$, $\mu'=(1^{\beta_1},2^{\beta_2},\dots)$ and
		$\mu=(1^{\gamma_1},2^{\gamma_2},\dots)$; it follows from the equality
		$\lambda+\mu = \lambda' + \mu'$ that $\lambda =
		(1^{\alpha_1+\beta_1-\gamma_1} 2^{\alpha_2+\beta_2-\gamma_2} \dots)$
		is fixed. If $f^{\gamma}_{\alpha\beta}$ is a coefficient at
		$x^\gamma$ at the polynomial $(1+x)^\alpha(1-x)^\beta$ then one has 
		
		\begin{equation*}
			h_{m,0,\lambda,\mu}^B =
			\sum_{\substack{\lambda'+\mu'=\lambda+\mu\\m_1+m_2=m}}
			\frac{h_{m_1,\lambda'}^A h_{m_2,\mu'}^A}{2^{\#\lambda + \#\mu}}
			\binom{m}{m_1}\binom{\lmod\lambda\rmod +
				\lmod\mu\rmod}{\lmod\lambda'\rmod} f^{\gamma_1}_{\alpha_1\beta_1}
			f^{\gamma_2}_{\alpha_2\beta_2}\dots
		\end{equation*}
		
		\section{Cut-and-join operator for the group $D_n$}\label{Sec:CJD}
		
		Let $Z[D_n]$ be a center of the group algebra of $D_n$; it is a vector
		space spanned by the class sums $\CSum{\lambda}{\emptyset}^+$ and
		$\CSum{\lambda}{\emptyset}^-$ where $\lmod\lambda\rmod = n$ and all
		the parts of $\lambda$ are even, and the class sums
		$\CSum{\lambda}{\mu}$ for all other $\lambda$ and $\mu$ such that 
		$\lmod\lambda\rmod + \lmod\mu\rmod = n$ and $\#\mu$ is
		even. The operator of multiplication by the element $\mathcal T_1 \in
		Z[D_1]$ defined by \eqref{Eq:DefT1} (a sum of all the reflections in
		$D_n$) acts on $Z[D_n]$; denote it $\Theta_D$ for brevity.
		
		Take an element $x \in B_n \setminus D_n$ (i.e.\ an odd permutation in
		$B_n$) and consider the map $\mathcal{I}:\Complex[D_n]\mapsto
		\Complex[D_n]$ defined by $\mathcal{I}(a) = xax^{-1}$. Corollary
		\ref{Cr:ConjOdd} implies that $\mathcal I$ maps the center $Z[D_n]$ to
		itself and its restriction to $Z[D_n]$ is an involution. This
		involution defines a splitting $Z[D_n] = V_n^+ \oplus V_n^-$ where
		$V_n^{\pm}$ are eigenspaces of $\mathcal I$ corresponding to
		eigenvalues $1$ and $-1$.
		
		Clearly, the vector space $V_n^+$ is spanned by the elements
		$\CSum{\lambda}{\mu}$ and $\CSum{\lambda}{\emptyset}^++
		\CSum{\lambda}{\emptyset}^-$, whereas the vector space $V_n^-$ is
		spanned by the the elements $\CSum{\lambda}{\emptyset}^+-
		\CSum{\lambda}{\emptyset}^-$. In particular, $V_n^-$ is nonempty if
		and only if $n = \lmod\lambda\rmod$ is even. Therefore, there exist a
		linear injection $\Phi_+: V_n^+ \to Z[B_n]$ and a linear
		isomorphism $\Phi_-: V_n^- \to Z[S_{n/2}]$ (for $n$ even) defined as
		\begin{align*}
			\Phi_+(\CSum{\lambda}{\mu}) &= \CSum{\lambda}{\mu}\\
			\Phi_+(\CSum{\lambda}{\emptyset}^+ + \CSum{\lambda}{\emptyset}^-) &= \CSum{\lambda}{\emptyset},
		\end{align*}
		and
		\begin{equation*}
			\Phi_-(\CSum{\lambda}{\emptyset}^+ - \CSum{\lambda}{\emptyset}^-) = C_{\lambda/2}
		\end{equation*}
		where $\lambda/2 = (\lambda_1/2 \DT, \lambda_s/2)$ (recall that
		$\CSum{\lambda}{\emptyset}^\pm$ are defined only if all the parts
		$\lambda_i$ of $\lambda$ are even). The image of $\Phi_+$ is a
		subspace $Z_e[B_n] \subset Z[B_n]$ spanned by all the class sums
		$\CSum{\lambda}{\mu}$ with $\#\mu$ even.
		
		\begin{theorem}\label{Th:CJD}
			\begin{enumerate}
				\item\label{It:Commutes} The operator $\Theta_D$ commutes with the
				involution $\mathcal I$, and therefore, $V_n^+$ and $V_n^-$ are
				$\Theta_D$-invariant.
				
				\item\label{It:CJDonVPlus} For the restriction of $\Theta_D$ on
				$V_n^+$ the following diagram is commutative:
				\begin{equation}\label{Eq:CJDViaGAlg}
					\xymatrix{
						V_n^{+} \ar[r]^{\Theta_D} \ar[d]^{\Phi_+} & V_n^{+} \ar[d]^{\Phi_+} \\
						Z[B_n] \ar[r]_{T_1} & Z[B_n]
					}
				\end{equation}
				
				\item\label{It:CHDonVMinus} If $n$ is even then for the restriction
				of $\Theta_D$ on $V_n^-$ the following diagram is commutative:
				\begin{equation}\label{Eq:CJDViaGAlgMinus}
					\xymatrix{
						V_n^- \ar[r]^{\Theta_D} \ar[d]^{\Phi_-} & V_n^- \ar[d]^{\Phi_-} \\
						Z[S_{n/2}] \ar[r]_{4T} & Z[S_{n/2}]
					}
				\end{equation}
			\end{enumerate}
		\end{theorem}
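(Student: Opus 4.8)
The plan is to verify the three claims in order, since each builds on the previous one. For part \eqref{It:Commutes}, the key observation is that $\mathcal{T}_1$ is the sum of all reflections $r_{ij}$ in $D_n$, and this set is \emph{closed under conjugation} by any element of $B_n$. Indeed, conjugation by $x \in B_n$ permutes the reflections $r_{ij}$ among themselves (they form the single conjugacy class $C_{1^{n-2}2^1\mid\varnothing}$, which lies in $D_n$ and is $B_n$-invariant by Corollary \ref{Cr:ConjOdd}\eqref{It:ConjOdd1} since $\lambda=1^{n-2}2^1$ has an odd part). Hence $x\mathcal{T}_1 x^{-1} = \mathcal{T}_1$, and so $\mathcal{I}(\mathcal{T}_1 a) = x\mathcal{T}_1 a x^{-1} = (x\mathcal{T}_1 x^{-1})(xax^{-1}) = \mathcal{T}_1\,\mathcal{I}(a)$ for every $a \in Z[D_n]$. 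This is exactly $\mathcal{I}\Theta_D = \Theta_D\mathcal{I}$; invariance of the eigenspaces $V_n^\pm$ follows immediately.

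For part \eqref{It:CJDonVPlus}, I would check commutativity of the diagram on each basis vector of $V_n^+$. On a class sum $\CSum{\lambda}{\mu}$ with $\#\mu$ even and $\mu \ne \emptyset$ (or $\lambda$ having an odd part), both $\Phi_+$ and the two operators act transparently: $\Phi_+(\CSum{\lambda}{\mu}) = \CSum{\lambda}{\mu}$, and $\Theta_D$ multiplies by the same reflection sum $\mathcal{T}_1$ that defines $T_1$, so the square closes provided every class $C_{\lambda'\mid\mu'}$ appearing in $T_1\CSum{\lambda}{\mu}$ again has $\#\mu'$ even — which holds because multiplication by a single $r_{ij}$ (an even permutation) changes the $\beta$-cycle count only by an even number, keeping us inside $Z_e[B_n] = \operatorname{Im}\Phi_+$. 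On the combined vector $\CSum{\lambda}{\emptyset}^+ + \CSum{\lambda}{\emptyset}^-$, I would use that $\Theta_D$ applied to this sum, viewed inside $Z[D_n]$, equals the restriction of $T_1$ applied to $\CSum{\lambda}{\emptyset}$; here the point is that the multiplicities counted by $T_1$ in $B_n$ and by $\Theta_D$ in $D_n$ agree, because every reflection that appears is in $D_n$ and the split classes recombine correctly under $\Phi_+$.

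Part \eqref{It:CHDonVMinus} is the one I expect to be the main obstacle, and it is where the mysterious factor $4$ and the rescaling to $S_{n/2}$ must be produced. Here $\lambda$ has all parts even, and $\Phi_-$ sends $\CSum{\lambda}{\emptyset}^+ - \CSum{\lambda}{\emptyset}^-$ to the ordinary class sum $C_{\lambda/2}$ in $S_{n/2}$. The plan is to compute directly how $\Theta_D$ acts on the difference vector and compare it, term by term, with how the classical cut-and-join operator $T$ (the $S_{n/2}$ multiplication-by-transpositions operator) acts on $C_{\lambda/2}$. The essential combinatorial content is that a reflection $r_{ij}$ acting on an element of $C_{\lambda\mid\emptyset}^{\pm}$ either joins two $\alpha$-pairs or cuts one, and under the halving $\lambda \mapsto \lambda/2$ this corresponds exactly to a transposition join/cut in $S_{n/2}$; I would read off the relevant multiplicities from Figure \ref{Fig:Mult} (Cases \ref{It:MatchAlphas} and \ref{It:2join}, the pure $\alpha$-pair cases). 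The factor $4 = 2 \cdot 2$ should emerge because each length-$k$ transposition event in $S_{n/2}$ lifts to a length-$2k$ event touching \emph{both} cycles of an $\alpha$-pair, doubling the count once from the cycle-length rescaling $k \leftrightarrow 2k$ and once from the two symmetric positions $a \leftrightarrow \tau(a)$ available in the $B_n$ picture. The delicate bookkeeping will be tracking how the sign distinguishing $+$ from $-$ (the parity of the auxiliary permutation $\gamma$ in \eqref{Eq:2LambdaEmpty}) flips under a join versus a cut, so that the difference $\CSum{\lambda}{\emptyset}^+ - \CSum{\lambda}{\emptyset}^-$ maps consistently; I would verify that a join and the inverse cut both preserve the relevant parity class, guaranteeing that $\Theta_D$ genuinely restricts to $V_n^-$ and realizes $4T$ after applying $\Phi_-$.
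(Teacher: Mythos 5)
Parts \ref{It:Commutes} and \ref{It:CJDonVPlus} of your proposal are correct and essentially coincide with the paper's argument: both rest on the facts that $\mathcal T_1$ is a sum of full $B_n$-conjugacy classes (hence fixed by conjugation by any $x\in B_n$) and that $\Phi_+$ is the restriction to $V_n^+$ of the natural embedding $\Complex[D_n]\hookrightarrow\Complex[B_n]$, so that $\Theta_D$ and $T_1$ are multiplication by one and the same central element.

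Part \ref{It:CHDonVMinus} is where there is a genuine gap, in two respects. First, the step you postpone as ``delicate bookkeeping'' --- that a reflection times an element of $C_{\lambda\mid\emptyset}^+$ never lands in a class labelled $C_{\lambda'\mid\emptyset}^-$, so that no cancellation occurs when $\Theta_D$ is applied to the difference vectors --- is not bookkeeping but the entire content of the assertion: without it you cannot even write down the matrix of $\Theta_D$ on $V_n^-$, and your proposal gives no actual method for tracking the parity through a cut or a join. (The paper avoids this by a different route: it identifies the operator induced on $V_n^-$ in polynomial coordinates with $U\CJ_1V$, where $V:\Complex[r]\to\Complex[p,q]$ substitutes $r_k\mapsto p_{2k}$ and $U$ sets all $q_i$ and all odd-indexed $p_i$ to zero, and then compares \eqref{Eq:CJ_1} with \eqref{Eq:CJA} by a direct computation.) Second, your accounting for the factor $4$ as a uniform ``$2\times 2$'' per elementary event is incorrect: joins and cuts do not scale by the same factor. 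Take $n=4$ and $\sigma=(12)(56)(34)(78)\in C_{(2,2)\mid\emptyset}$; exactly $8$ reflections join it into $C_{(4)\mid\emptyset}$ (and one can check that all $8$ products lie in a single $D_4$-class), whereas exactly $2$ reflections, namely $r_{13}$ and $r_{24}$, cut $(1234)(5678)$ into $C_{(2,2)\mid\emptyset}$; the corresponding transposition counts in $S_2$ are $1$ and $1$. Thus in the class-sum bases $\Theta_D|_{V_4^-}$ has matrix $\begin{pmatrix}0&2\\8&0\end{pmatrix}$ while $4T$ has matrix $\begin{pmatrix}0&4\\4&0\end{pmatrix}$; these agree only after conjugating by the diagonal rescaling that multiplies the basis vector indexed by $\lambda$ by $2^{\#\lambda}$ (with $\#\lambda$ the number of parts), which is exactly what the substitution $r_i=p_i/2$ appearing in the corollary implements, since on monomials $p_\lambda=2^{\#\lambda}r_{\lambda}$. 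A term-by-term comparison of multiplicities of the kind you propose therefore cannot close with a uniform factor $4$; to finish you would have to either build this rescaling into $\Phi_-$ explicitly, or follow the operator-identity route the paper takes.
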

		
		\begin{corollary}
			$V_n^+$ is isomorphic to the space of homogeneous polynomials
			$\Complex[p,q]_n^e$ of total degree $n$ and even degree in
			$q$; the isomorphism is given by \eqref{Eq:DefIsoB}. $V_n^-$ is isomorphic to the space $\Complex[p]_{n/2}$ of
			homogeneous polynomials of total degree $n/2$, the isomorphism is
			given by
			\begin{equation*}
				\Phi_A(\CSum{\lambda}{\emptyset}^+-\CSum{\lambda}{\emptyset}^-) =
				p_{\lambda/2} \bydef p_{\lambda_1/2} \dots p_{\lambda_s/2}.
			\end{equation*}
			The following diagrams are commutative
			\begin{equation*}
				\xymatrix{
					V_n^+ \ar[r]^{\Theta_D} \ar[d]^{\Psi \circ \Phi_+} & V_n^+
					\ar[d]^{\Psi \circ \Phi_+} \\      
					\Complex[p,q]_n^e \ar[r]_{\CJ_1} & \Complex[p,q]_n^e
				}, \qquad
				\xymatrix{
					V_n^- \ar[r]^{\Theta_D} \ar[d]^{\Psi_A \circ \Phi_-} & V_n^-
					\ar[d]^{\Psi_A \circ \Phi_-} \\    
					\Complex[p]_{n/2} \ar[r]_{\CJ_A} & \Complex[p]_{n/2}
				}
			\end{equation*}
			where $\CJ_A$ is the classical cut-and-join operator expressed in
			rescaled variables $r_i = p_i/2$ and multiplied by $4$. 
		\end{corollary}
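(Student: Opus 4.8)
The plan is to obtain both squares by transporting Theorem~\ref{Th:CJD} through the isomorphisms $\Psi$ and $\Psi_A$; apart from that theorem no new group-theoretic input is needed.

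For the left square, recall that $\Phi_+$ is injective with image the subspace $Z_e[B_n]\subset Z[B_n]$ spanned by the class sums $\CSum{\lambda}{\mu}$ with $\#\mu$ even, and that $\Psi(\CSum{\lambda}{\mu})=p_\lambda q_\mu$. Since $\#\mu$ equals the number of $q$-factors of $p_\lambda q_\mu$, the map $\Psi$ carries $Z_e[B_n]$ isomorphically onto $\Complex[p,q]_n^e$, so $\Psi\circ\Phi_+$ is the claimed isomorphism $V_n^+\to\Complex[p,q]_n^e$. That $\CJ_1$ preserves this subspace I would read off \eqref{Eq:CJ_1}: each of its six terms changes the number of $q$-factors by $0$ or $\pm 2$, hence preserves the parity of the $q$-degree. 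Commutativity then follows by pasting two known squares: diagram~\eqref{Eq:CJDViaGAlg} gives $\Phi_+\circ\Theta_D=T_1\circ\Phi_+$ on $V_n^+$, while the defining diagram~\eqref{Eq:CJViaGAlg} of $\CJ_1$ gives $\Psi\circ T_1=\CJ_1\circ\Psi$ on all of $Z[B_n]$, in particular on $Z_e[B_n]$; composing yields $(\Psi\circ\Phi_+)\circ\Theta_D=\CJ_1\circ(\Psi\circ\Phi_+)$.

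For the right square I would argue in the same way, now using the linear isomorphism $\Phi_-\colon V_n^-\to Z[S_{n/2}]$ and the standard isomorphism $\Psi_A\colon Z[S_{n/2}]\to\Complex[p]_{n/2}$, $\Psi_A(C_\nu)=p_\nu$. Since $\Phi_-(\CSum{\lambda}{\emptyset}^+-\CSum{\lambda}{\emptyset}^-)=C_{\lambda/2}$, the composite $\Phi_A=\Psi_A\circ\Phi_-$ is exactly the map sending this generator to $p_{\lambda/2}$, which gives the second isomorphism. Diagram~\eqref{Eq:CJDViaGAlgMinus} gives $\Phi_-\circ\Theta_D=4T\circ\Phi_-$, and the classical fact that $\Psi_A$ intertwines multiplication by the transposition sum with the cut-and-join \eqref{Eq:CJA} turns $4T$ into the operator $\CJ_A$; composing gives $\Phi_A\circ\Theta_D=\CJ_A\circ\Phi_A$.

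The one step that is not pure diagram-chasing is pinning down $\CJ_A$ precisely as \eqref{Eq:CJA} written in the rescaled variables $r_i=p_i/2$ and multiplied by $4$. Two effects combine here: the explicit factor $4$ on the bottom arrow of \eqref{Eq:CJDViaGAlgMinus}, and the halving of all cycle lengths built into $\Phi_-$ (the passage $\lambda\mapsto\lambda/2$), which rescales the grading by a factor $2$ and hence transforms the differential operator term by term. I would verify it by substituting $r_i=p_i/2$ and tracking how each monomial $p_{\lambda/2}$ and each partial derivative rescales, checking that the cut and the join terms acquire exactly the stated coefficients. This constant bookkeeping is the main obstacle; everything else is the transport of Theorem~\ref{Th:CJD} through two linear isomorphisms.
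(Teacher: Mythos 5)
Your treatment of the left square is correct and coincides with the paper's own argument: $\Phi_+$ is the restriction to $V_n^+$ of the inclusion $\Complex[D_n]\to\Complex[B_n]$, so pasting \eqref{Eq:CJDViaGAlg} with the defining diagram \eqref{Eq:CJViaGAlg} of $\CJ_1$ gives $(\Psi\circ\Phi_+)\circ\Theta_D=\CJ_1\circ(\Psi\circ\Phi_+)$, and $\Psi$ visibly carries $Z_e[B_n]$ onto $\Complex[p,q]_n^e$.

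The right square is where there is a genuine gap, for two reasons. First, a structural one: the paper has no independent proof of assertion (3) of Theorem \ref{Th:CJD}; its proof states explicitly that \emph{instead} of proving that assertion it proves the second square of this corollary (the two being declared equivalent), by computing $\CJ_A=U\CJ_1V$ directly from \eqref{Eq:CJ_1}, where $V$ substitutes $r_k\mapsto p_{2k}$ and $U$ sets the $q$'s and odd $p$'s to zero and renames $p_{2k}=r_k$. So the ingredient you lean on is exactly the statement to be proved. Second, and more seriously, your pasting cannot produce the stated operator. Both of your vertical maps $\Phi_-$ and $\Psi_A$ are index relabelings (class sums to class sums, class sums to monomials); composing two commutative squares whose bottom arrows are $4T$ and the classical intertwiner can therefore only yield four times the classical cut-and-join \emph{in the plain variables} $p$, and no rescaling of variables can ever appear. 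Your idea that the rescaling $r_i=p_i/2$ comes from ``the halving of all cycle lengths built into $\Phi_-$'' is a misconception: the passage $\lambda\mapsto\lambda/2$ merely relabels indices and is already absorbed into $\Phi_A(\CSum{\lambda}{\emptyset}^+-\CSum{\lambda}{\emptyset}^-)=p_{\lambda/2}$. But $\CJ_A$ is \emph{not} four times the classical operator in plain variables; it is its conjugate under $p_i\mapsto 2p_i$, and the two operators differ.

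A direct count for $n=4$ makes the discrepancy concrete. For $\sigma=(1234)(5678)\in C_{(4)\mid\emptyset}$ exactly two reflections, $r_{13}$ and $r_{24}$, carry it into $C_{(2,2)\mid\emptyset}$, while an element of $C_{(2,2)\mid\emptyset}$ is carried into $C_{(4)\mid\emptyset}$ by exactly eight reflections; in each case all products land in a single $D_4$-class. Hence the corollary's operator must act by $\CJ_A(p_2)=2p_1^2$ and $\CJ_A(p_1^2)=8p_2$ (eigenvalues $\pm4$), whereas your pasted operator sends $p_2\mapsto 4p_1^2$ and $p_1^2\mapsto 4p_2$ (same eigenvalues, wrong matrix) --- and with your stated calibration of the ``classical fact'' even $p_2\mapsto 8p_1^2$, $p_1^2\mapsto 8p_2$, since with normalized class sums the transposition sum corresponds to one half of \eqref{Eq:CJA}, not to \eqref{Eq:CJA} (test it on $p_1^2$). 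The same count shows that diagram \eqref{Eq:CJDViaGAlgMinus} itself commutes only if $\Phi_-$ carries hidden weights $2^{\#\lambda}$, i.e.\ $\CSum{\lambda}{\emptyset}^+-\CSum{\lambda}{\emptyset}^-\mapsto 2^{\#\lambda}C_{\lambda/2}$: read naively it gives $\Phi_-\Theta_D(\CSum{(4)}{\emptyset}^+-\CSum{(4)}{\emptyset}^-)=2C_{(1,1)}$ against $4T\,C_{(2)}=4C_{(1,1)}$. This factor of $2$ per part --- reflecting that each cycle of $\lambda/2$ is doubled into an $\alpha$-pair --- is precisely what your deferred ``constant bookkeeping'' would have to produce, and neither of your two cited ingredients contains it; your verification step would simply fail. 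The workable route is the paper's: bypass $Z[S_{n/2}]$ altogether and compute the action of $\Theta_D$ on $V_n^-$ by restricting \eqref{Eq:CJ_1} to the all-even classes.
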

		
		In other words, the cut-and-join operator for the group $D_n$ is a
		direct sum of the cut-and-join operator for the group $B_n$,
		restricted to the space of polynomials of even degree in $q$, and, for
		$n$ even, the cut-and-join operator for the group $S_{n/2}$, expressed
		via rescaled variables $r_i = p_i/2$ and multiplied by $4$.
		
		\begin{proof}[Proof of Theorem \ref{Th:CJD} and the corollary]
			The image of $V_n^+ \subset Z[D_n] \subset \Complex[D_n]$ under the
			natural embedding $\iota: \Complex[D_n] \to \Complex[B_n]$ is
			$Z[B_n]$, and the restriction of $\iota$ to $V_n^+$ coincides with
			$\Phi_+$. So, assertion \ref{It:Commutes} follows from the fact that
			$\Theta_D$ is an operator of multiplication by the element $T_1 \in
			Z[D_n]$ such that $\iota(T_1) \in Z[B_n]$. Assertion
			\ref{It:CJDonVPlus} is then evident.
			
			Instead of proving assertion \ref{It:CHDonVMinus} we prove the
			commutativity of the second diagram from the corollary; it is
			obviously an equivalent fact. The description above implies that
			$\CJ_A = U \CJ V$ where the operator $V: \Complex[r] \to
			\Complex[p,q]$ is given by $(Vf)(p,q) = f(p_2, p_4, \dots)$ and $U:
			\Complex[p,q] \to \Complex[p]$, by $(Uh)(r) =
			\left.h\right|_{\substack{p_1=p_3 \DT= 0\\ p_2=r_1, p_4=r_2, \dots
					\\ q_1=q_2 \DT= 0}}$. Now the required formula follows directly
			from \eqref{Eq:CJ_1} and \eqref{Eq:CJA}.
		\end{proof}
		
		Hurwitz numbers for the group $D_n$ coincide with (some of) the
		Hurwitz numbers for the group $B_n$ by Definition \ref{Df:D-Hurwitz};
		see also remark \ref{Rm:DHurwRefine}.
		
		\section{Other properties of B-Hurwitz numbers and B-cut-and-join operator}
		
		Classical Hurwitz numbers $h_{m,\lambda}$ exhibit many interesting
		properties. They play an active part in the algebro-geometric theory
		of moduli spaces of complex curves and holomorphic functions on them
		(see \cite{GouldJacks,ELSV}), appear in some problems of
		low-dimensional topology \cite{BurmanZvonkine,TwistedHurwitz} and the
		theory of integrable systems \cite{Kramer,KazarianHodge,FSpace}. The
		classical cut-and-join operator also has various interpretations
		besides the one described here, including the Fock space
		representation (see \cite{Johnson} and the references therein).
		
		In this section we describe analogs of some of these results for the
		B-Hurwitz numbers and B-version of the cut-and-join operator. Proper
		references and short explanation of the classical picture is given in
		the beginning of each subsection.
		
		\subsection{Ribbon decomposition}\label{SSec:Ribbon}
		
		In this section we will be using notations and definitions from
		\cite{TwistedHurwitz}; to make the article self-contained, we quote
		here the most important definitions from there. A {\em
			decorated-boundary surface} (DBS) with $n$ marked points is a
		surface with the boundary $\dM$ and the marked points $a_1 \DT, a_n
		\in \dM$ satisfying some nontriviality conditions (see
		\cite{TwistedHurwitz} for details). All DBS in this section are
		assumed oriented.
		
		If $M$ is an oriented DBS with $n$ marked points then one defines its
		{\em boundary permutation} $\Sigma(M) \in S_n$ as follows:
		$\Sigma(M)(i) = j$ if the marked points $a_i$ and $a_j$ belong to the
		same connected component of the boundary $\dM$ and $a_j$ immediately
		follows $a_i$ as one moves around the boundary in the positive
		direction (according to the orientation).
		
		A {\em ribbon} is a long narrow rectangle $\rho$ with alternating
		black and white vertices; black vertices are joined by a diagonal. For
		a DBS $M$ and $1 \le i < j \le n$, denote by $G[i,j] M$ the result of
		gluing of a ribbon to $M$ such that the black vertices are identified
		with the marked points $a_i$ and $a_j$, and the short sides are glued
		to short segments of $\dM$ containing $a_i$ and $a_j$ and directed
		according to the orientation of $\dM$. Apparently, $G[i,j] M$ is an
		oriented DBS with the same marked points; we call $G[i,j]$ an
		operation of {\em ribbon gluing}.
		
		A {\em ribbon decomposition} of the DBS $M$ is an
		orientation-preserving diffeomorphism of $M$ with the DBS $G[i_m,j_m]
		\dots G[i_1,j_1] E_n$ obtained by consecutive ribbon gluing to a union
		$E_n$ of $n$ disks each one having one marked point $a_i$ on its
		boundary. The image in $M$ of the $k$-th ribbon glued is also called a
		ribbon (number $k$) and is denoted $\rho_k \subset M$.
		
		Now fix positive integers $m$ and $\ell$. Let $M$ be an oriented
		decorated-boundary surface with $2n$ marked points $a_1 \DT, a_{2n}$,
		and let $T: M \to M$ be an orientation-preserving smooth involution
		mapping marked points to marked points and having no fixed points on
		the boundary $\dM$. If $T(D) = D$ where $D \subset \dM$ is a connected
		component then $D$ contains an even number $2\mu_i$ of marked
		points. If $D$ is not $T$-invariant then $T$ exchanges it with another
		component $D'$; both contain the same number $\lambda_i$ of marked
		points. Numbers $\lambda_1 \DT, \lambda_s$ and $\mu_1 \DT, \mu_t$ form
		two partitions denoted $\Lambda_+(M,T)$ and $\Lambda_-(M,T)$,
		respectively; one has $\lmod\Lambda_+(M,T)\rmod +
		\lmod\Lambda_-(M,T)\rmod = n$.
		
		Let now $R$ a ribbon decomposition of $M$ and suppose that it is
		$T$-invariant: for any ribbon $\rho$ its image $T(\rho)$ is also a
		ribbon. Some ribbons are $T$-invariant: $T(\rho) = \rho$, while others
		come in pairs $\rho_1 = T(\rho_2), \rho_2 = T(\rho_1)$ (for brevity,
		call these $T$-pairs). If the diagonal of a ribbon $\rho$ joins marked
		points $a_i$ and $a_j$ then the diagonal to the ribbon $T(\rho)$ joins
		$T(a_i)$ and $T(a_j)$. In particular, diagonals of $T$-invariant
		ribbons join marked points $a_i$ and $T(a_i)$, for some $i$.
		
		A ribbon is homeomorphic to a disk, so, by the Brouwer theorem, $T$
		has a fixed point on each $T$-invariant ribbon. Ribbons entering a
		$T$-pair do not contain fixed points: such ribbons, if not disjoint,
		intersect only at the boundary of $M$ where $T$ has no fixed points.
		
		For any $i$, $1 \le i \le 2n$, denote by $\bar i \bydef i + n \bmod
		2n$. 
		
		\begin{definition}\label{Def:B-RibDecom}
			A {\em B-ribbon decomposition} with the profile
			$(\lambda,\mu,m,\ell)$ is a DBS $M$ with $2n$ marked points equipped
			with a smooth orientation-preserving involution $T: M \to M$ and a
			$T$-invariant ribbon decomposition $R$ such that
			\begin{enumerate}
				\item $T$ sends the marked point $a_i$ to the marked point $a_{\bar
					i}$, for all $i = 1 \DT, 2n$.
				
				\item $\Lambda_+(M,T) = \lambda$, $\Lambda_-(M,T) = \mu$.
				
				\item The number of $T$-invariant ribbons in the decomposition $R$
				is $\ell$ and the number of $T$-pairs of ribbons is $m$.
				
				\item $T$ sends the ribbon $\rho_k$ either to itself or to the
				ribbon $\rho_{k\pm1}$, for all $k = 1 \DT, 2m+\ell$.
				
				\item $T$ has exactly one fixed point inside each $T$-invariant
				ribbon and no fixed points elsewhere (in particular, it has no
				fixed points on the boundary of $M$).
			\end{enumerate}
		\end{definition}
		
		The total space $M$ of a B-ribbon decomposition is thus a DBS with a
		ribbon decomposition $H_1 H_2 \dots H_{m+\ell} E_{2n}$ where each
		$H_k$ is either an operation $G[i_k,\bar i_k]$ of gluing a
		$T$-invariant ribbon or an operation $G[i,j] G[\bar i,\bar j]$ of
		gluing a $T$-pair.
		
		B-ribbon decompositions are split into equivalence classes in a
		natural way: $(M_1,T_1,R_1)$ is said to be equivalent to
		$(M_2,T_2,R_2)$ if there exists an orientation-preserving
		diffeomorphism $f: M_1 \to M_2$ mapping marked points of $M_1$ to the
		marked points of $M_2$ with the same numbers, each ribbon of $R_1$, to
		the ribbon of $R_2$ with the same number, and transforming the
		involutions one into the other: $f \circ T_1 = T_2 \circ f$.
		
		Let $\xi$ be a quotient of a ribbon $\rho$ by the symmetry with
		respect to its center. We call $\xi$ a {\em petal}; it is a bigon with
		a black and a white vertex; one of its sides is long, and the other,
		short. The image of the diagonal is a line joining the black vertex
		with an internal point of the petal.
		
		Let $M$ be a DBS with $n$ marked points, and $1 \le i \le n$. Denote
		by $G[i] M$ the result of gluing of a petal to $M$ such that the black
		vertex is identified with the marked point $a_i$, and the short side,
		with a short segment of $\dM$ containing $a_i$ and directed according
		to the orientation of $\dM$. Apparently, $G[i] M$ is a DBS, and its
		boundary inherits an orientation from $\dM$. We call $G[i]$ an
		operation of petal gluing; it is similar to the operation $G[i,j]$ of
		ribbon gluing described above. 
		
		Let $(M,T,R)$ be a B-ribbon decomposition with the profile
		$(\lambda,\mu,m,\ell)$. Denote by $\overline M$ the quotient of $M$ by
		the involution $T$. Since $T$ has no fixed points on the boundary and
		$T(a_i) = a_{\bar i}$, the quotient $\overline M$ has a
		natural structure of a DBS with $n$ marked points $b_i = p(a_i) =
		p(a_{\bar i})$, $i = 1 \DT, n$, where $p: M \to \overline M$ is
		the natural projection. If the ribbons $\rho_k$ and $\rho_{k+1}$ form a
		$T$-pair then $\rho = p(\rho_1) = p(\rho_2) \subset \overline M$ is a
		ribbon; if the ribbon $\rho_k$ is $T$-invariant then $p(\rho_k)
		\subset \overline M$ is a petal. In other words, if $M = H_1 \dots
		H_{m+\ell} E_{2n}$ is a ribbon decomposition described above then
		$\overline M = \overline H_1 \dots \overline H_{m+\ell} E_n$ where
		$\overline H_k = G[i_k,j_k]$ if $H_k = G[i_k, j_k] G[\bar i_k, \bar
		j_k]$ and $\overline H_k = G[i_k]$ if $H_k = G[i_k, \bar i_k]$. 
		
		\begin{theorem}\label{Th:1-1Corresp}
			Let $\lambda$ and $\mu$ be two partitions such that
			$\lmod\lambda\rmod + \lmod\mu\rmod = n$, and $m$, $\ell$, two
			positive integers. There is a one-to-one correspondences between
			sequences of reflections in the group $B_n$ having profile
			$(\lambda,\mu,m,\ell)$ and equivalence classes of B-ribbon
			decompositions having the same profile.
		\end{theorem}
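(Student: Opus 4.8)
The plan is to establish the bijection by reducing the $B$-case to the classical result already proved in \cite{TwistedHurwitz} for the symmetric group, using the quotient construction $M \mapsto \overline M$ introduced just before the statement. First I would recall the classical correspondence: a sequence of transpositions $((i_1 j_1) \DT, (i_m j_m))$ in $S_n$ (together with petal gluings for the $\ell_i$-type moves) corresponds bijectively to equivalence classes of ribbon(-and-petal) decompositions of a DBS $\overline M$ with $n$ marked points, where the boundary permutation $\Sigma(\overline M)$ equals the product of the transpositions, and the cycle structure of this permutation matches the partition recording the numbers of marked points on boundary components. The two gluing operations $G[i,j]$ (ribbon) and $G[i]$ (petal) realize, at the level of boundary permutations, left multiplication by the transposition $(ij)$ and by a ``half-transposition'' fixing $i$ in the relevant sense.

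The heart of the argument is to set up a dictionary between the data $(\sigma_1 \DT, \sigma_{m+\ell})$ of reflections in $B_n \subset S_{2n}$ and the data $(M,T,R)$ of a $B$-ribbon decomposition. Each reflection $r_{ij} = (i,j)(\bar i, \bar j)$ corresponds to a $T$-pair $H_k = G[i,j] G[\bar i, \bar j]$, and each reflection $l_i = (i, \bar i)$ corresponds to a $T$-invariant ribbon $H_k = G[i, \bar i]$; here I use that under $\Phi$ the reflections in $B_n$ are exactly the $r_{ij}$ and $l_i$. I would then argue that building $M$ upward by the moves $H_1 \dots H_{m+\ell} E_{2n}$ is the same datum as building the quotient $\overline M = \overline H_1 \dots \overline H_{m+\ell} E_n$ by the corresponding ribbon/petal moves, since the $T$-invariance of $R$ and the condition $T(a_i) = a_{\bar i}$ force the whole upstairs picture to be the pullback of a single downstairs decomposition. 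In other words, the quotient map $p \colon M \to \overline M$ gives a functor from $B$-ribbon decompositions to ordinary ribbon-and-petal decompositions of a DBS with $n$ marked points, and I would check it is a bijection on equivalence classes by exhibiting the inverse: given a downstairs decomposition one takes the orientation double cover branched appropriately, or more concretely builds $M$ as two disjoint copies for each ribbon and a single doubled copy (with the branch/fixed point) for each petal, recovering $T$ as the deck involution.

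To finish I must match the combinatorial profile on both sides. Upstairs, $\sigma_1 \dots \sigma_{m+\ell} \in C_{\lambda \mid \mu}$ means the product permutation in $\Norm(\tau) \subset S_{2n}$ has $\alpha$-pairs of lengths $\lambda_i$ and $\beta$-cycles of lengths $2\mu_j$ (Lemma \ref{Lm:AlphaBeta} and Proposition \ref{Pp:ConjClass}); downstairs this is exactly the statement that $T$-exchanged boundary components of $M$ carry $\lambda_i$ marked points each while $T$-invariant components carry $2\mu_j$, i.e. $\Lambda_+(M,T) = \lambda$ and $\Lambda_-(M,T) = \mu$. The cleanest way to see this is to observe that the boundary permutation $\Sigma(M)$ lies in $\Norm(\tau)$ and equals the product $\sigma_1 \dots \sigma_{m+\ell}$ (this is the analog, for the two gluing types, of the classical identity $\Sigma(G[i,j]M) = (ij)\,\Sigma(M)$, which I would verify for $G[i,\bar i]$ as well), so that $\alpha$-pairs of $\Sigma(M)$ correspond to $T$-swapped component pairs and $\beta$-cycles to $T$-invariant components. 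The main obstacle I anticipate is this last bookkeeping step: carefully checking that the effect of a $T$-invariant petal/ribbon gluing on the boundary permutation is precisely multiplication by $l_i$ rather than, say, $l_i$ up to a twist, and that the fixed-point condition (item 5 of Definition \ref{Def:B-RibDecom}) together with the Brouwer-theorem discussion forces $T$-invariant ribbons and only those to contribute $\beta$-cycles. Once the identity $\Sigma(M) = \sigma_1 \dots \sigma_{m+\ell}$ and the $T$-equivariant version of the classical bijection are in place, the correspondence and its compatibility with equivalence classes follow formally.
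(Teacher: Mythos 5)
The core of your proposal coincides with the paper's proof: the dictionary $r_{ij} \leftrightarrow G[i,j]\,G[\bar i,\bar j]$, $l_i \leftrightarrow G[i,\bar i]$, the identity $\Sigma(M) = \sigma_1 \cdots \sigma_{m+\ell}$ obtained from the classical theorem of \cite{TwistedHurwitz}, and the matching of $\alpha$-pairs (resp.\ $\beta$-cycles) of $\Sigma(M)$ with $T$-swapped pairs (resp.\ $T$-invariant components) of $\partial M$ is exactly how the paper argues. Note that your anticipated ``main obstacle'' is actually vacuous: upstairs, $G[i,\bar i]$ is an ordinary ribbon gluing at the two marked points $a_i$ and $a_{\bar i}$ of the $2n$-pointed surface, so $\Sigma(G[i,\bar i]M) = (i\,\bar i)\,\Sigma(M) = l_i\,\Sigma(M)$ is literally an instance of the classical identity; no petal version of it is required, because the paper never computes boundary permutations on the quotient.

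The genuine gap is your route to bijectivity through the quotient $\overline M$. The assignment $(M,T,R) \mapsto \overline M$ is \emph{not} injective on equivalence classes: the $T$-pair corresponding to $\Phi(s^+_{ij}) = (i,j)(\bar i,\bar j)$ (ribbons joining $a_i,a_j$ and $a_{\bar i},a_{\bar j}$) and the $T$-pair corresponding to $\Phi(s^-_{ij}) = (i,\bar j)(\bar i,j)$ (ribbons joining $a_i,a_{\bar j}$ and $a_{\bar i},a_j$) project to the same downstairs ribbon glued at $b_i,b_j$ --- since $T$ preserves orientation, $\overline M$ is oriented and there is only one way to attach that ribbon --- yet the two upstairs decompositions are inequivalent, because an equivalence must preserve the labels of the marked points. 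Hence the downstairs ribbon-and-petal decomposition does not determine the sequence of reflections (it cannot distinguish $s^+_{ij}$ from $s^-_{ij}$, just as the downstairs boundary permutation in $S_n$ cannot separate $\Lambda_+$ from $\Lambda_-$), and your proposed inverse --- the branched double cover built from ``two disjoint copies for each ribbon'' --- is ill-defined: for each downstairs ribbon there are two inequivalent ways to glue the two lifted copies. This missing lifting datum is exactly what the twisted structures of \cite{TwistedHurwitz} encode, and it is why the paper's proof stays upstairs: a B-ribbon decomposition is, by Definition \ref{Def:B-RibDecom} and the paragraph following it, an ordered ribbon decomposition $H_1 \cdots H_{m+\ell}E_{2n}$ of the $2n$-pointed surface, so it already encodes the sequence of reflections, and bijectivity comes from the classical correspondence applied in $S_{2n}$; your (correct) final paragraph then finishes the profile matching. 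Dropping the quotient reduction entirely repairs the argument.
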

		
		\begin{proof}
			The correspondence in question relates a sequence of reflections $h_1
			\DT, h_{m+\ell}$ to a ribbon decomposition $H_1 \dots H_{m+\ell}
			E_{2n}$ of the DBS $M$ where for every $k$ one has $H_k = G[i,j]G[\bar
			i, \bar j]$ if $h_k = (ij)(\bar i \bar j)$ and $H_k = G[i,\bar i]$
			if $h_k = (i \bar i)$. The involution $T$ exchanges the two ribbons
			attached in the first case and leaves the ribbon invariant in the
			second case. By \cite[Theorem 1.11]{TwistedHurwitz} the boundary
			permutation of $M$ is equal to $h_1 \dots h_{m+\ell} \in B_n$. It
			follows from the description of the conjugacy classes in Section
			\ref{SSec:ConjClass} that $h_1 \dots h_{m+\ell} \in
			C_{\lambda\mid\mu}$ if and only if $\Lambda_+(M,T) = \lambda$ and
			$\Lambda_-(M,T) = \mu$.
		\end{proof}
		
		\subsection{Fermionic interpretation of B-cut-and-join}\label{SSec:Fermion}
		
		First, recall some classical results about Fock space,
		a.k.a.\ (semi-)infinite wedge product. For a partition $\lambda =
		(\lambda_1 \DT\ge \lambda_s)$ define a {\em Fock set}
		\begin{equation*}
			\Phi_\lambda \bydef \{i - \frac12 - \lambda_i \mid i = 1, 2, \dots\}
			\subset \HInteger;
		\end{equation*}
		here one assumes $\lambda_i = 0$ if $i > s$.
		
		\begin{lemma}
			\begin{enumerate}
				\item $\#\bigl(\Phi_\lambda \cap \{x \mid x < 0\}\bigr) =
				\#\bigl(\{x \mid x > 0\} \setminus \Phi_\lambda =
				\lmod\lambda\rmod \bydef \lambda_1 \DT+ \lambda_s$.
				
				\item If a subset $\Phi \subset \HInteger$ is such that $\Phi \cap \{x
				\mid x < 0\}$ and $\{x \mid x > 0\} \setminus \Phi$ are finite sets
				of equal cardinality then there exists a unique partition $\lambda$
				such that $\Phi = \Phi_\lambda$.
			\end{enumerate}
		\end{lemma}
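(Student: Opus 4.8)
The plan is to prove both parts by a direct bijective/counting argument keyed to the negative-position content of the Fock set, treating the partition as a bookkeeping device for where the "particles" sit relative to the origin.

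For part (1), I would first unwind the definition. The Fock set is $\Phi_\lambda = \{i - \frac12 - \lambda_i\}_{i \ge 1}$, with $\lambda_i = 0$ for $i > s$, so for large $i$ the elements are exactly the positive half-integers $i - \frac12$. The first task is to show the two displayed cardinalities are equal and both compute $\lmod\lambda\rmod$. I would set up an explicit bijection between the negative elements of $\Phi_\lambda$ and the positive half-integers \emph{missing} from $\Phi_\lambda$. The cleanest route is the standard Maya-diagram / Young-diagram boundary-path argument: place a particle at each element of $\Phi_\lambda$ and a hole at each half-integer not in $\Phi_\lambda$; the condition $\lambda_1 \ge \lambda_2 \ge \dots$ guarantees that as $i$ increases the map $i \mapsto i - \frac12 - \lambda_i$ is strictly increasing, so $\Phi_\lambda$ is genuinely a subset of $\HInteger$ with the "vacuum profile" far from the origin. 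The number of particles in the negative region equals $\#\{i : \lambda_i \ge i\}$ counted with the displacement each $\lambda_i$ induces; summing the displacements $\lambda_i$ against the vacuum position $i - \frac12$ shows that the total charge to the left of $0$ matches the total vacancy to the right, and each equals $\sum_i \lambda_i = \lmod\lambda\rmod$. I would phrase this as: moving from the vacuum set $\{i - \frac12 : i \ge 1\}$ to $\Phi_\lambda$, each part $\lambda_i$ slides the $i$-th particle left by $\lambda_i$ units across the origin, and the conservation of "charge" (particles left of $0$ minus holes left of $0$, equivalently the equality of the two counts) is exactly the statement that the shifts preserve the balance.

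For part (2), the strategy is to invert the construction. Given such a $\Phi$, list its elements in increasing order; because $\Phi \cap \{x < 0\}$ is finite and $\{x > 0\} \setminus \Phi$ is finite, $\Phi$ agrees with the vacuum $\{i - \frac12\}$ for all sufficiently large positive half-integers, so I can enumerate $\Phi = \{\phi_1 < \phi_2 < \dots\}$ with $\phi_i = i - \frac12$ eventually. Define $\lambda_i \bydef i - \frac12 - \phi_i$; I would then check that this is a weakly decreasing sequence of nonnegative integers that is eventually zero, i.e.\ a genuine partition, and that it reproduces $\Phi$. Weak monotonicity of $\lambda_i$ follows because $\phi_{i+1} \ge \phi_i + 1$ (distinct half-integers) forces $\lambda_{i+1} \le \lambda_i$; nonnegativity and eventual vanishing follow from the finiteness hypotheses on the two symmetric-difference sets, which is precisely where the equal-cardinality condition guarantees that the indexing never "skips" and that $\lambda_i \ge 0$ throughout. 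Uniqueness is then immediate: any partition $\lambda'$ with $\Phi_{\lambda'} = \Phi$ must satisfy $i - \frac12 - \lambda'_i = \phi_i$ for every $i$ (since the increasing enumeration is canonical), forcing $\lambda' = \lambda$.

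The main obstacle I anticipate is purely organizational rather than deep: making the "increasing enumeration" of $\Phi$ rigorous and verifying that $\lambda_i := i - \frac12 - \phi_i$ lands in $\Integer_{\ge 0}$ for \emph{every} $i$, not just asymptotically. The subtlety is that a priori $\phi_i$ could exceed $i - \frac12$ for small $i$, which would give a negative $\lambda_i$; ruling this out is exactly what the equal-cardinality hypothesis buys us, and the clean way to see it is the charge-balance identity from part (1) run in reverse. I would therefore prove part (1) in a form strong enough (the bijection between negative particles and positive holes) that part (2) follows by reading the same bijection backwards, so that the two parts share a single combinatorial core and part (2) becomes essentially a consistency check rather than a separate argument.
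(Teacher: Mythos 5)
The genuine gap is in your part (1), precisely at the sentence claiming that the two counts ``each equal $\sum_i \lambda_i = \lmod\lambda\rmod$.'' That equality is false, so no bookkeeping can establish it. Counterexample: $\lambda = (2)$ gives $\Phi_\lambda = \{-\frac32, \frac32, \frac52, \frac72, \dots\}$, so $\Phi_\lambda \cap \{x \mid x < 0\} = \{-\frac32\}$ and $\{x \mid x > 0\} \setminus \Phi_\lambda = \{\frac12\}$ both have cardinality $1$, while $\lmod\lambda\rmod = 2$. In fact your own intermediate observation already contradicts the claim: since $i - \frac12 - \lambda_i < 0$ iff $\lambda_i \ge i$, the number of negative elements of $\Phi_\lambda$ (equivalently, by the particle--hole bijection, the number of positive holes) is the Durfee-square size $\#\{i \mid \lambda_i \ge i\}$, which is not $\lmod\lambda\rmod$ in general. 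Your phrase ``counted with the displacement each $\lambda_i$ induces'' is where the argument silently converts a cardinality into a weighted sum; the quantity that does equal $\lmod\lambda\rmod$ is that weighted sum, namely $\sum_{u \in \Phi_\lambda,\, u < 0} (-u) + \sum_{u > 0,\, u \notin \Phi_\lambda} u = \lmod\lambda\rmod$. To be fair, the error originates in the paper: part (1) of the lemma is wrong as printed. But a sound blind proof attempt has to detect this (the two-line example above suffices) and prove the corrected statement --- equal cardinalities, both equal to $\#\{i \mid \lambda_i \ge i\}$ --- rather than assert the false identity as established.

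The rest of your proposal is in good shape, and on part (2) it is more complete than the paper itself, whose entire proof is the remark that the lemma ``is proved by direct induction'' with a citation to Miwa--Jimbo--Date. Your inversion argument works: the hypotheses force $\Phi$ to be bounded below and to coincide with the vacuum $\{i - \frac12\}$ far to the right, so the increasing enumeration $\phi_1 < \phi_2 < \dots$ exists; $\lambda_i \bydef i - \frac12 - \phi_i$ is an integer, weakly decreasing because $\phi_{i+1} \ge \phi_i + 1$, and eventually zero. The nonnegativity point you flag as the main subtlety is handled by exactly the counting you allude to: writing $N = \#(\Phi \cap \{x<0\})$ and $H = \#(\{x>0\} \setminus \Phi)$ with $N = H$, for every positive integer $i$ one has $\#\{u \in \Phi \mid u < i\} \ge N + i - H = i$, hence $\phi_i \le i - \frac12$, i.e.\ $\lambda_i \ge 0$. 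Uniqueness follows, as you say, because the map $i \mapsto i - \frac12 - \lambda_i$ is strictly increasing for any partition, so the enumeration is canonical. Note also that nothing downstream in the paper is damaged by the mis-statement in part (1): only the bijection $\lambda \leftrightarrow \Phi_\lambda$ and the equality of the two cardinalities are ever used.
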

		
		The lemma is proved by direct induction; see \cite{FSpace} for
		details.
		
		A classical (charge zero) Fock space \cite{FSpace} (or fermion state
		space) is a vector space $\Fock$ with the basis $w_\lambda$ where
		$\lambda$ runs through all partitions, including the empty partition;
		$w_\emptyset$ is called the vacuum vector. One usually represents
		$w_\lambda$ as an ``infinite wedge product'' $\bigwedge_{u \in
			\Phi_\lambda} z_u = z_{1/2-\lambda_1} \wedge z_{3/2-\lambda_2}
		\wedge \dots$; note that the indices at the product form an increasing
		sequence.
		
		Let $k \in \Integer$, $k \ne 0$. Consider a linear operator $a_k:
		\Fock \to \Fock$ acting on the basic vectors as follows:
		\begin{equation*}
			a_k (\bigwedge_{u \in \Phi_\lambda} z_u) = \sum_{v \in \Phi_\lambda}
			z_{v-k} \wedge (-1)^{d_{\lambda,k}(v)} \bigwedge_{u \in \Phi_\lambda
				\setminus \{v\}} z_u
		\end{equation*}
		where $d_{\lambda,k}(v)$ is defined as $\#\{u \in \Phi_\lambda \mid
		v-k \le u < v\}$ if $k > 0$ and as $\#\{u \in \Phi_\lambda \mid v < u
		\le v-k\}$ if $k < 0$.
		
		\begin{theorem}[\cite{FSpace}]\label{Th:Fock}
			\begin{enumerate}
				\item Operators $a_i$ and $a_j$ commute unless $i+j = 0$.
				
				\item\label{It:BFC} For any element $x \in \Fock$ there exists a
				unique polynomial $R_x \in \Complex[p] \bydef \Complex[p_1, p_2,
				\dots]$ such that $x = R_x(a_1,a_2,\dots)w_{\emptyset}$. If $x =
				w_\lambda$ then $R_x = s_\lambda$ (a classical Schur polynomial).
			\end{enumerate}
		\end{theorem}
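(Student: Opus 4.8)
The statement is the charge-zero boson--fermion correspondence, and I would prove it by recognizing the $a_k$ as fermionic current modes and then extracting the Heisenberg action on $\Fock$. As a tool, introduce for each $r \in \HInteger$ the operator $\psi_r$, which wedges $z_r$ into a basis vector in its order-determined position with the attendant Koszul sign (returning $0$ if $z_r$ is already present), and $\psi_r^*$, which deletes $z_r$ with the matching sign (returning $0$ if $z_r$ is absent). These obey the canonical anticommutation relations $\{\psi_r,\psi_s^*\}=\delta_{rs}$ and $\{\psi_r,\psi_s\}=\{\psi_r^*,\psi_s^*\}=0$, and comparing with the displayed formula one checks that the sign $(-1)^{d_{\lambda,k}(v)}$ is exactly the Koszul sign incurred by removing $z_v$ and inserting $z_{v-k}$, so that $a_k=\sum_{r\in\HInteger}\psi_{r-k}\psi_r^*$.

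For the first assertion I would compute $[a_i,a_j]$ directly from the anticommutation relations using the bilinear identity $[\psi_a\psi_b^*,\psi_c\psi_d^*]=\delta_{bc}\psi_a\psi_d^*-\delta_{ad}\psi_c\psi_b^*$. Summing over both families, the two Kronecker deltas collapse the double sum to $\sum_r\psi_{r-i-j}\psi_r^*-\sum_r\psi_{r-i-j}\psi_r^*$; whenever $i+j\neq0$ each of these is the genuine operator $a_{i+j}$, so $[a_i,a_j]=a_{i+j}-a_{i+j}=0$. (Were $i+j=0$ the same manipulation would involve $\sum_r\psi_r\psi_r^*$, which is not a finite operator, and the well-known central term would survive; but $a_0$ is not defined, so this case does not arise.) A reader wishing to avoid $\psi,\psi^*$ may instead run the cancellation as a case analysis on wedge vectors, tracking the signs $(-1)^{d_{\lambda,k}(v)}$ according to whether the element moved by $a_i$ is the one just produced by $a_j$ or an original one; this is the sign-heavy but routine version of the same computation.

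For the second assertion, grade $\Fock$ by the energy $\lmod\lambda\rmod$ of $w_\lambda$. One checks that $a_k$ with $k>0$ raises the grading by $k$, that $a_{-k}$ lowers it, and that $a_{-k}w_\emptyset=0$ for every $k>0$ (each term would insert an already-occupied $z_{v+k}$). Since the raising operators commute by the first assertion, the assignment $R\mapsto R(a_1,a_2,\dots)w_\emptyset$ defines a graded linear map $\Complex[p]\to\Fock$ sending $p_k$ to $a_k$. To see it is an isomorphism---and hence that $R_x$ exists and is unique---note that in each degree $n$ both sides have dimension equal to the number of partitions of $n$, and that $a_{\lambda_1}\cdots a_{\lambda_s}w_\emptyset=\pm w_\lambda+(\text{a combination of }w_\nu\text{ with }\nu\neq\lambda)$, the transition being unitriangular for the dominance order.

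It remains to identify $R_{w_\lambda}$ with $s_\lambda$, which is the crux. I would argue through the vertex operator $\Gamma_+(x)\bydef\exp\bigl(\sum_{k\ge1}\tfrac{x^k}{k}a_k\bigr)$. A fermionic computation---a generating-function repackaging of the moves above---shows that $\Gamma_+(x)w_\mu=\sum_\lambda x^{\lmod\lambda\rmod-\lmod\mu\rmod}w_\lambda$, summed over $\lambda\supseteq\mu$ with $\lambda/\mu$ a horizontal strip. On the polynomial side, under $a_k\mapsto p_k$ the operator $\Gamma_+(x)$ becomes multiplication by $\exp(\sum_{k\ge1}x^kp_k/k)=\sum_{k\ge0}h_kx^k$, whose action on Schur functions obeys the very same Pieri rule. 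Matching the two rules and building $\lambda$ one row at a time then gives $R_{w_\lambda}=s_\lambda$ by induction; equivalently, this packages the Jacobi--Trudi determinant $s_\lambda=\det(h_{\lambda_i-i+j})$ together with its fermionic counterpart obtained by Wick-expanding the wedge. The main obstacle is precisely verifying that $\Gamma_+(x)$ implements the horizontal-strip rule on the wedge basis: this is where the fermionic signs genuinely have to be controlled, whereas everything else reduces to grading and linear algebra.
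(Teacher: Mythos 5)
The paper itself contains no proof of Theorem \ref{Th:Fock}: it is quoted verbatim from \cite{FSpace}, so your attempt can only be measured against the standard literature argument, whose architecture you do follow (realizing $a_k$ as the fermionic bilinear $\sum_r \psi_{r-k}\psi_r^*$, deriving the commutation relations, and identifying $R_{w_\lambda}$ with $s_\lambda$ via the vertex operator $\Gamma_+$ and the Pieri rule). Your part (1) is correct, including the observation that the only dangerous case is $i+j=0$, where the central term lives.

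Part (2), however, rests on a concretely false claim: that $a_{\lambda_1}\cdots a_{\lambda_s}w_\emptyset=\pm w_\lambda+(\text{combination of }w_\nu,\ \nu\neq\lambda)$ with a transition matrix unitriangular for dominance order. Already for $\lambda=(2,1)$ this fails. One computes $a_1w_\emptyset=w_{(1)}$, and $a_2w_{(1)}$ is supported on $w_{(3)}$ and $w_{(1,1,1)}$ only: a single move $v\mapsto v-2$ can never turn $\Phi_{(1)}=\{-\frac12,\frac32,\frac52,\dots\}$ into $\Phi_{(2,1)}=\{-\frac32,\frac12,\frac52,\dots\}$, because these sets differ in four elements. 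So the coefficient of $w_{(2,1)}$ in $a_2a_1w_\emptyset$ is \emph{zero} --- the ``diagonal'' entry vanishes, consistently with $p_2p_1=s_{(3)}-s_{(1,1,1)}$ and with the fact that the transition matrix here is the character table of $S_n$, whose diagonal entries can vanish ($\chi^{(2,1)}$ is zero on transpositions). With this step gone, your argument for bijectivity of $R\mapsto R(a_1,a_2,\dots)w_\emptyset$, i.e.\ for the existence and uniqueness of $R_x$, collapses. There are two standard repairs. First, prove the commutator in precisely the case you set aside, $[a_{-k},a_k]=k$ for $k>0$, check that $a_{-k}$ is adjoint to $a_k$ for the inner product making $\{w_\lambda\}$ orthonormal, and deduce $\langle a_\mu w_\emptyset,a_\nu w_\emptyset\rangle=z_\mu\delta_{\mu\nu}$ (with $z_\mu$ the centralizer order); linear independence plus your dimension count then gives bijectivity. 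Second, and closer to your own plan: prove the horizontal-strip rule for $\Gamma_+(x)$ \emph{first} --- you rightly call it the crux, but you only assert it --- and deduce $h_{\mu_1}(a)\cdots h_{\mu_s}(a)w_\emptyset=\sum_\lambda K_{\lambda\mu}w_\lambda$, where the Kostka matrix $K$ genuinely is unitriangular; these vectors therefore form a basis, bijectivity follows, and Jacobi--Trudi inversion yields $s_\lambda(a_1,a_2,\dots)w_\emptyset=w_\lambda$ in the same stroke. Either way, a computation your draft elides (the central term, or the fermionic Pieri rule) must actually be carried out; unitriangularity for the power-sum monomials themselves is simply not true.
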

		
		The map $R: \Fock \to \Complex[p]$ defined in \ref{It:BFC} is called
		boson-fermion correspondence. It is a linear bijection, so for any
		linear operator $A: \Fock \to \Fock$ one can define its polynomial
		counterpart $\hat A \bydef R A R^{-1}: \Complex[p] \to \Complex[p]$;
		the map $A \leftrightarrow \hat A$ is an isomorphism of the algebras
		of linear operators on $\Fock$ and on $\Complex[p]$.
		
		\begin{proposition}[\cite{FSpace}]\label{Pp:OperHat}
			\begin{enumerate}
				\item\label{It:PdP} If $A = a_k$ then $\hat A = p_k$ (the operator of
				multiplication) if $k > 0$ and $\hat A = -k \pder{}{p_{-k}}$ if $k
				< 0$.
				
				\item\label{It:Energy} If $A = \sum_{i=1}^\infty a_i a_{-i}$ (called
				the energy operator) then $\hat A$ is the Euler vector field
				$\sum_{i=1}^\infty i p_i \pder{}{p_i}$.
				
				\item\label{It:CJ} If $A = \sum_{\substack{i,j,k \ne 0\\ i+j+k = 0}}
				a_i a_j a_k$ then $\hat A$ is the classical cut-and-join operator
				$\CJ$.
			\end{enumerate}
		\end{proposition}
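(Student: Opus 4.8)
The whole statement reduces to part~\ref{It:PdP}, because the assignment $A \mapsto \hat A = RAR^{-1}$ is an isomorphism of operator algebras (as noted after Theorem~\ref{Th:Fock}); once the images of the generators $a_k$ are identified, parts~\ref{It:Energy} and~\ref{It:CJ} follow by composing the resulting differential operators and collecting terms. So the plan is to first pin down $\hat a_k$ for every $k$, then substitute.

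For $k>0$ I would argue directly from the boson-fermion correspondence. By Theorem~\ref{Th:Fock}\ref{It:BFC} every vector is $x = R_x(a_1,a_2,\dots)w_\emptyset$, a polynomial in the creation operators $a_i$, $i\ge 1$. Since $a_k$ commutes with all $a_i$ for $i>0$ (their indices cannot sum to $0$), left multiplication by $a_k$ turns $R_x(a_1,a_2,\dots)w_\emptyset$ into $(p_kR_x)(a_1,a_2,\dots)w_\emptyset$, whence $R(a_kx)=p_kR_x$ and $\hat a_k = p_k$. For $k<0$, write $k=-m$ with $m>0$. First I would read off, straight from the definition of $a_k$ on the basis $w_\lambda=\bigwedge_{u\in\Phi_\lambda}z_u$, the Heisenberg relations $[a_i,a_j]=c_i\,\delta_{i+j,0}\,\mathrm{id}$ with $c_i\ne 0$ a scalar---the signs $(-1)^{d_{\lambda,k}(v)}$ are precisely what force these to hold. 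Next, $a_{-m}w_\emptyset=0$: the operator shifts each index $v\in\Phi_\emptyset$ to $v+m$, which already lies in $\Phi_\emptyset$, so every wedge summand repeats a factor and vanishes; hence $\hat a_{-m}(1)=0$. Finally, applying the hat isomorphism to the commutation relations and using $\hat a_j=p_j$ for $j>0$ gives $[\hat a_{-m},p_j]=c\,\delta_{j,m}$ (a scalar). An operator on $\Complex[p]$ that kills $1$ and has scalar commutator $c_j$ with each multiplication $p_j$ is forced, by induction on degree, to equal $\sum_j c_j\pder{}{p_j}$; matching the scalar yields $\hat a_{-m}=m\pder{}{p_m}$, i.e.\ $\hat a_k=-k\pder{}{p_{-k}}$.

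Part~\ref{It:Energy} is then immediate: $\widehat{a_ia_{-i}}=\hat a_i\hat a_{-i}=p_i\cdot i\pder{}{p_i}$, and summing recovers the Euler field~\eqref{Eq:Euler}; the infinite sum is harmless, since $a_{-i}$ annihilates each $w_\lambda$ for all large $i$, so only finitely many terms act nontrivially on a given vector.

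For part~\ref{It:CJ} I would substitute the $\hat a_i$ into $\sum_{i+j+k=0}\hat a_i\hat a_j\hat a_k$. The constraint $i+j+k=0$ with all indices nonzero forbids any two of them from being opposite (else the third vanishes), so within each term the three images commute and the product is unambiguous; moreover the three indices cannot all share a sign. The patterns with two positive indices produce the join monomials $(i+j)p_ip_j\pder{}{p_{i+j}}$ and those with two negative indices produce the cut monomials $ij\,p_{i+j}\pdertwo{}{p_i}{p_j}$, so after collecting one lands on the operator~\eqref{Eq:CJA}. The one genuinely delicate point---and the step I expect to be the main obstacle---is the combinatorial bookkeeping: passing from the sum over \emph{ordered} triples $(i,j,k)$ (with repeated index values handled correctly) to the normalization of $\CJ$ in~\eqref{Eq:CJA}, so that the multiplicities come out exactly and not off by a constant factor.
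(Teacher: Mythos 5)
The paper gives no proof of Proposition \ref{Pp:OperHat} at all --- it is quoted from \cite{FSpace} --- so there is nothing of the author's to compare your argument against; judged on its own, your strategy is the standard one. Your part \ref{It:PdP} is sound: the $k>0$ case is immediate from the uniqueness in Theorem \ref{Th:Fock}(2) exactly as you say, and for $k<0$ the scheme ``Heisenberg relations, plus $a_{-m}w_\emptyset=0$, plus the fact that an operator killing $1$ and having scalar commutators with every multiplication $p_j$ must be $\sum_j c_j\pder{}{p_j}$'' is correct and complete in outline (you still owe the actual computation of the scalar, $[a_{-m},a_m]=m\,\name{id}$ in the paper's sign convention, but that is a routine check on basis vectors). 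Part \ref{It:Energy} then follows as you write.

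The gap is exactly where you predicted it, and it is not benign: the bookkeeping in part \ref{It:CJ} does \emph{not} produce \eqref{Eq:CJA}. In each term of $\sum a_ia_ja_k$ the three factors commute; an unordered triple $\{i,j,-(i+j)\}$ with $i\ne j$ occurs $6$ times among the ordered triples, and $\{i,i,-2i\}$ occurs $3$ times. Collecting the join terms (two positive indices) and the cut terms (two negative indices) gives
\begin{equation*}
\sum_{\substack{i,j,k\neq 0\\ i+j+k=0}} \hat a_i\hat a_j\hat a_k
= 3\sum_{i,j=1}^\infty \left( ij\,p_{i+j}\pdertwo{}{p_i}{p_j}
+ (i+j)\,p_ip_j\pder{}{p_{i+j}} \right) = 3\,\CJ .
\end{equation*}
(Quick check on $p_2$: the ordered sum gives $6p_1^2$, whereas $\CJ p_2=2p_1^2$.) So assertion \ref{It:CJ} as printed is off by a factor of $3$: the operator should be $\frac13\sum_{i+j+k=0}a_ia_ja_k$. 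This is a normalization slip in the statement rather than a flaw in your plan --- indeed the paper's later proposition, with its prefactor $\frac1{12}$ in $J_1$ and the claim $\hat J_1=\CJ_1$, is consistent only with $\widehat{\sum a_ia_ja_k}=3\,\CJ$, since the four triple sums there collapse to $4(C\otimes\name{Id}+\name{Id}\otimes C)$. To complete your write-up you should prove the corrected identity $\hat A=3\,\CJ$ and flag the misprint (or carry the $\frac13$ through); as written, your final step asserts an equality that the computation refutes.
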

		
		It can be proved (see \cite{FSpace}) that for any partition $\lambda$
		the basic element $w_\lambda$ is an eigenvector of both the energy
		operator and the operator from assertion \ref{It:CJ} of Proposition
		\ref{Pp:OperHat}. The corresponding eigenvalues are $\lmod
		\lambda\rmod \bydef \sum_i \lambda_i$ and $\phi(\lambda) \bydef \sum_i
		\lambda_i(\lambda_i-2i+1)$. Equivalently, the Schur polynomial
		$s_\lambda$ is the eigenvector of the Euler field with the eigenvalue
		$\lmod \lambda\rmod = \deg s_\lambda$ and of the cut-and-join operator
		with the eigenvalue $\phi(\lambda)$.
		
		Consider now a space $L \bydef \Fock \otimes \Fock$ (two-particle
		fermion state space) with the basis $w_\lambda \otimes w_\mu$, where
		$\lambda, \mu$ are partitions. The tensor square of the boson-fermion
		correspondence $R^{\otimes 2}: L \to \Complex[p]^{\otimes 2}
		=\Complex[u,v]$ where $u = (u_1, u_2, \dots)$ and $v = (v_1, v_2,
		\dots)$ is a linear isomorphism. Like for the classical case, any
		operator $A: L \to L$ has its polynomial counterpart $R^{\otimes 2} A
		(R^{\otimes 2})^{-1}: \Complex[u,v] \to \Complex[u,v]$, which we also
		denote by $\hat A$.
		
		For every $k \ne 0$ consider two linear operators: $b_k \bydef a_k
		\otimes I + I \otimes a_k$ and $c_k \bydef a_k \otimes I - I \otimes
		a_k$. It follows from Proposition \ref{Pp:OperHat} (assertion
		\ref{It:PdP}) that
		\begin{align}
			&\begin{aligned}
				&\hat b_k = u_k + v_k = p_k,\\
				&\hat c_k = u_k - v_k = q_k
			\end{aligned}
			\qquad \text{if } k > 0,\label{Eq:KPos}\\
			&\begin{aligned}
				&\hat b_k = -k(\pder{}{u_{-k}} + \pder{}{v_{-k}}) =
				-2k\pder{}{p_{-k}},\\
				&\hat c_k = -k(\pder{}{u_{-k}} - \pder{}{v_{-k}}) =
				-2k\pder{}{q_{-k}}
			\end{aligned}
			\qquad \text{if } k < 0.\label{Eq:KNeg}
		\end{align}
		Here we are using notation from Section \ref{SSec:Change}.
		
		Apparently, $b_i$ and $b_j$, as well as $c_i$ and $c_j$, commute unless
		$i+j = 0$; $b_i$ and $c_j$ commute for all $i, j$.
		
		\begin{proposition}
			If 
			\begin{align*}
				J_1 &\bydef \frac{1}{12}(\sum_{\substack{i+j+k=0\\i,j,k \ne 0}}
				b_ib_jb_k + \sum_{\substack{i+j+k=0\\i,j,k \ne 0}}b_ic_jc_k +
				\sum_{\substack{i+j+k=0\\i,j,k\neq0}}
				c_ib_jc_k+\sum_{\substack{i+j+k=0\\i,j,k \ne 0}}c_ic_jb_k)\\
				&= \frac{1}{12}\sum_{\substack{i+j+k=0\\i,j,k\neq0}} b_ib_jb_k +
				\frac{1}{4} \sum_{\substack{i+j+k=0\\i,j,k \ne 0}} b_ic_jc_k,\\
				J_2 &\bydef \sum_{\ell=1}^\infty b_\ell c_{-\ell} +
				\sum_{\ell=1}^\infty c_\ell b_{-\ell}
			\end{align*}
			then $\hat{J_1} = \CJ_1$ and $\hat{J_2} = \CJ_2$ (the B-cut-and-join
			operators), and also
			\begin{align*}
				J_1 &= C \otimes \name{Id} + \name{Id} \otimes C,\\
				J_2 &= E\otimes \name{Id} - \name{Id}\otimes E
			\end{align*}
			where $E$ and $C$ are the operators defined in Proposition
			\ref{Pp:OperHat} (2) and (3) respectively.
		\end{proposition}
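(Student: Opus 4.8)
The plan is to prove both the identities $\hat J_1 = \CJ_1$, $\hat J_2 = \CJ_2$ and the tensor-decomposition formulas, exploiting the fact that the two assertions feed into each other. In fact, once I establish the decompositions $J_1 = C \otimes \name{Id} + \name{Id} \otimes C$ and $J_2 = E \otimes \name{Id} - \name{Id} \otimes E$, the hat-identities follow almost for free from Proposition \ref{Pp:ChVar} together with the functoriality of $R$ under tensor products. So I would reverse the order of the stated conclusions and prove the operator decompositions first.

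First I would unwind the definitions $b_k = a_k \otimes I + I \otimes a_k$ and $c_k = a_k \otimes I - I \otimes a_k$. For $J_2$ this is a short computation: expanding $b_\ell c_{-\ell} + c_\ell b_{-\ell}$ as a sum of four tensor-product terms, the cross terms (those of the form $a_\ell \otimes a_{-\ell}$ with mismatched signs) cancel in pairs because of the opposite signs attached to $c$, leaving $J_2 = 2\sum_\ell (a_\ell a_{-\ell}) \otimes \name{Id} - 2\sum_\ell \name{Id} \otimes (a_\ell a_{-\ell})$; recognizing $\sum_\ell a_\ell a_{-\ell}$ as (twice) the energy operator $E$ of Proposition \ref{Pp:OperHat}\ref{It:Energy} and absorbing constants gives $J_2 = E \otimes \name{Id} - \name{Id} \otimes E$, up to a normalization I would pin down carefully. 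For $J_1$, I would start from the already-simplified second expression $\frac{1}{12}\sum b_ib_jb_k + \frac{1}{4}\sum b_ic_jc_k$, substitute the tensor formulas for $b$ and $c$, and expand. Each triple product becomes a sum of eight terms distributed across $C \otimes \name{Id}$, $\name{Id} \otimes C$, and genuinely mixed pieces living in neither factor alone. The whole content of the computation is that the mixed pieces cancel: the coefficients $\frac{1}{12}$ and $\frac{1}{4}$ are exactly tuned so that the $bcc$-type and $bbb$-type contributions combine to annihilate every term that is not purely in one tensor factor, and the surviving pure terms reassemble into $\frac{1}{12}\sum_{i+j+k=0} a_ia_ja_k$ on each side, which is $C$ by Proposition \ref{Pp:OperHat}\ref{It:CJ}. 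This cancellation is the main obstacle and the step requiring genuine care; I expect it to hinge on the symmetry of the summation over $i+j+k=0$ under permuting $(i,j,k)$, which lets me symmetrize the $b_ic_jc_k$ sum and match coefficients against the fully symmetric $b_ib_jb_k$ sum.

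Once the decompositions are in hand, I would deduce the hat-identities. The boson-fermion correspondence $R: \Fock \to \Complex[p]$ intertwines $C$ with $\CJ$ and $E$ with the Euler field $E$ (the polynomial operator of \eqref{Eq:Euler}), by Proposition \ref{Pp:OperHat}. Under $R^{\otimes 2}: L \to \Complex[p]^{\otimes 2} = \Complex[u,v]$, an operator of the form $A \otimes \name{Id}$ becomes $\hat A$ acting on the $u$-variables and $\name{Id} \otimes A$ becomes $\hat A$ acting on the $v$-variables, simply because $R^{\otimes 2}$ is the tensor product of the two copies of $R$ and the variables $u,v$ are the two copies of $p$. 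Therefore $\hat J_1 = \CJ_u + \CJ_v$ and $\hat J_2 = E_u - E_v$ in the notation of Proposition \ref{Pp:ChVar}. But that proposition identifies precisely these combinations with $\CJ_1$ and $\CJ_2$ under the change of variables $u_\ell = (p_\ell + q_\ell)/2$, $v_\ell = (p_\ell - q_\ell)/2$ — which is exactly the identification \eqref{Eq:KPos}–\eqref{Eq:KNeg} makes between $(\hat b_k, \hat c_k)$ and $(p_k, q_k)$. Hence $\hat J_1 = \CJ_1$ and $\hat J_2 = \CJ_2$, completing the proof. The only point demanding attention in this last stage is bookkeeping of the factors of $2$ coming from \eqref{Eq:KNeg}, but these are already reconciled by the definitions of $u,v$ in Proposition \ref{Pp:ChVar}, so no new difficulty arises.
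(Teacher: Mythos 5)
Your plan reverses the paper's order of argument: the paper first proves $\hat J_1 = \CJ_1$ and $\hat J_2 = \CJ_2$ by a direct calculation with \eqref{Eq:KPos}--\eqref{Eq:KNeg} against the explicit formulas \eqref{Eq:CJ_1}--\eqref{Eq:CJ_2}, and only then obtains the tensor decompositions via Proposition \ref{Pp:ChVar}; you prove the decompositions first and deduce the hat-identities afterwards. That reversal is legitimate in principle, and two of your structural points are sound: writing $A_i \bydef a_i\otimes I$ and $B_i \bydef I\otimes a_i$, the mixed terms in $J_1$ do cancel (the $AAB$-type terms receive $3\cdot\frac1{12}$ from the $bbb$ sum and $-\frac14-\frac14+\frac14$ from the $bcc$ sum, total $0$, and symmetrically for $ABB$), and $R^{\otimes2}$ does intertwine $A\otimes\name{Id}$ with $\hat A$ acting on the $u$-variables.

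The genuine gap is in the constants, and for this proposition the constants are the whole content. First, the surviving pure terms of $J_1$ do not carry the coefficient $\frac1{12}$: the $b_ic_jc_k$ sum also contributes pure terms, since $A_iA_jA_k$ enters it with sign $(+)(+)(+)$ and $B_iB_jB_k$ with sign $(+)(-)(-)$, both equal to $+1$; the correct pure coefficient is therefore $\frac1{12}+\frac14=\frac13$, so what your expansion actually establishes is $J_1=\frac13\bigl(\sum a_ia_ja_k\otimes\name{Id}+\name{Id}\otimes\sum a_ia_ja_k\bigr)$, and neither $\frac1{12}\sum a_ia_ja_k$ nor $\frac13\sum a_ia_ja_k$ ``is $C$'' if $C$ denotes the literal sum of Proposition \ref{Pp:OperHat}, assertion \ref{It:CJ}. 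Second, for $J_2$ your expansion correctly yields $2\bigl(\sum_{\ell\ge1}a_\ell a_{-\ell}\otimes\name{Id}-\name{Id}\otimes\sum_{\ell\ge1}a_\ell a_{-\ell}\bigr)$, but $\sum_{\ell\ge1}a_\ell a_{-\ell}$ is exactly the energy operator of Proposition \ref{Pp:OperHat}, assertion \ref{It:Energy} --- not ``twice'' it --- so the factor of $2$ cannot be ``absorbed''; what you have proved differs from what is claimed by an explicit nonzero factor. In both cases you stop at ``a normalization I would pin down carefully'', i.e.\ you defer precisely the step on which the proposition lives or dies, and the discrepancies propagate into your second stage: feeding the corrected decompositions into your deduction, with Proposition \ref{Pp:OperHat}, assertion \ref{It:CJ}, taken at face value, outputs $\hat J_1=\frac13\CJ_1$ and $\hat J_2=2\CJ_2$ rather than the claimed identities. (Indeed, with the paper's conventions one can check that the polynomial counterpart of $\sum_{i+j+k=0}a_ia_ja_k$ is $3\CJ$, not $\CJ$, so the two halves of the statement cannot both hold with the literal normalizations; this is exactly the kind of issue your route is obliged to confront.) This is also why the order of argument matters more than you suggest: the paper's route computes $\hat J_1,\hat J_2$ directly from \eqref{Eq:KPos}--\eqref{Eq:KNeg} and never invokes assertion \ref{It:CJ} of Proposition \ref{Pp:OperHat}, so it is insulated from that normalization, whereas your route requires pinning down every one of these constants exactly. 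Until that bookkeeping is actually carried out and reconciled with the statement, your proof is incomplete.
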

		
		The proof is straightforward: the first pair of equations is proved by
		a calculation using \eqref{Eq:KPos}, \eqref{Eq:KNeg}, \eqref{Eq:CJ_1}
		and \eqref{Eq:CJ_2}, as well as the commutativity relations. The
		second pair of equations follows from Proposition \ref{Pp:ChVar} or
		can be obtained by a direct calculation, too.
		
		\begin{corollary}\label{Th:Fock2}
			$w_\lambda\otimes w_\mu$ are eigenvectors of $J_1$ and $J_2$ with the
			eigenvalues $\sum_{i=1}^\infty (\lambda_i(\lambda_i-2i+1) +
			\mu_i(\mu_i-2i+1)) $ and $\sum_{i=1}^\infty (\lambda_i-\mu_i)$,
			respectively.
		\end{corollary}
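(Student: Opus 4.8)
The statement is an immediate consequence of the tensor-product description of $J_1$ and $J_2$ established in the Proposition just above, combined with the known spectral data of the energy operator and of the cut-and-join operator on the one-particle Fock space $\Fock$. The plan is as follows.

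First I would recall, from the discussion following Proposition \ref{Pp:OperHat}, that every basic vector $w_\nu \in \Fock$ is simultaneously an eigenvector of the energy operator $E$ (assertion \ref{It:Energy}) and of the cut-and-join operator $C$ (assertion \ref{It:CJ}), with eigenvalues $\lmod\nu\rmod = \sum_i \nu_i$ and $\phi(\nu) = \sum_i \nu_i(\nu_i-2i+1)$, respectively. Specializing to $\nu = \lambda$ and $\nu = \mu$ gives $C w_\lambda = \phi(\lambda) w_\lambda$, $C w_\mu = \phi(\mu) w_\mu$, $E w_\lambda = \lmod\lambda\rmod\, w_\lambda$, and $E w_\mu = \lmod\mu\rmod\, w_\mu$.

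Second I would feed the decompositions $J_1 = C \otimes \name{Id} + \name{Id} \otimes C$ and $J_2 = E \otimes \name{Id} - \name{Id} \otimes E$ from the preceding Proposition into the decomposable vector $w_\lambda \otimes w_\mu$. Using $(A \otimes \name{Id})(w_\lambda \otimes w_\mu) = (A w_\lambda) \otimes w_\mu$ and $(\name{Id} \otimes A)(w_\lambda \otimes w_\mu) = w_\lambda \otimes (A w_\mu)$ for any operator $A$ on $\Fock$, one reads off directly
\begin{align*}
    J_1 (w_\lambda \otimes w_\mu) &= \bigl(\phi(\lambda) + \phi(\mu)\bigr)\, w_\lambda \otimes w_\mu,\\
    J_2 (w_\lambda \otimes w_\mu) &= \bigl(\lmod\lambda\rmod - \lmod\mu\rmod\bigr)\, w_\lambda \otimes w_\mu.
\end{align*}
Expanding $\phi(\lambda) + \phi(\mu) = \sum_{i=1}^\infty \bigl(\lambda_i(\lambda_i-2i+1) + \mu_i(\mu_i-2i+1)\bigr)$ and $\lmod\lambda\rmod - \lmod\mu\rmod = \sum_{i=1}^\infty (\lambda_i - \mu_i)$ yields precisely the claimed eigenvalues.

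There is essentially no genuine obstacle at this stage: the whole content sits in the tensor-factorization of $J_1$ and $J_2$ supplied by the preceding Proposition, after which the corollary is pure linear algebra on a decomposable tensor. The only point deserving a moment's care is the minus sign in $J_2 = E \otimes \name{Id} - \name{Id} \otimes E$, which is exactly what turns the sum into the difference $\lambda_i - \mu_i$ and reproduces the $\CJ_2$-eigenvalue recorded in Theorem \ref{Th:B-Schur}. As a consistency check one may bypass the tensor decomposition altogether: under the boson-fermion correspondence $R^{\otimes 2}$ the vector $w_\lambda \otimes w_\mu$ maps to $s_\lambda(u)\,s_\mu(v) = s_{\lambda\mid\mu}(p,q)$, and since $\hat J_1 = \CJ_1$ and $\hat J_2 = \CJ_2$, the eigenvector statement transports term-for-term into Theorem \ref{Th:B-Schur}, with the identical eigenvalues.
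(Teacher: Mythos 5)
Your proof is correct and follows exactly the route the paper intends: the corollary is presented as an immediate consequence of the preceding Proposition's decompositions $J_1 = C \otimes \name{Id} + \name{Id} \otimes C$ and $J_2 = E\otimes \name{Id} - \name{Id}\otimes E$, combined with the one-particle spectral data $Cw_\nu = \phi(\nu)w_\nu$ and $Ew_\nu = \lmod\nu\rmod w_\nu$ quoted after Proposition \ref{Pp:OperHat}. Your consistency check via the boson-fermion correspondence and Theorem \ref{Th:B-Schur} is a nice additional cross-verification, but the core argument matches the paper's implicit proof.
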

		
		\subsection{KP hierarchy for B-Hutwitz numbers}
		
		The KP hierarchy is one of the best known and studied integrable
		systems; see e.g.\ \cite{Kramer,KazarianHodge,FSpace} for details. It
		is an infinite system of PDE applied to a formal series $F \in
		\Complex[[t]]$ where $t = (t_1, t_2, \dots)$ is a countable collection
		of variables (``times''). Each equation looks like
		$\pdertwo{F}{t_i}{t_j} = P_{ij}(F)$ where $P_{ij}$ is a homogeneous
		multivariate polynomial with partial derivatives of $F$ used as its
		arguments. For example, the first two equations of the hierarchy are
		
		\begin{align*}
			&F_{22}=-\frac{1}{2}F_{11}^2 + F_{31} -\frac{1}{12}F_{1111}\\
			&F_{32}=-F_{11}F_{21}+F_{41}-\frac{1}{6}F_{2111};
		\end{align*}
		here $F_{i_1i_2\dots i_n}$ means $\frac{\partial^nF}{\partial
			u_{i_1}\partial u_{i_2\dots\partial u_{i_n}}}$. If $F$ is a solution
		of the hierarchy, its exponential $\tau= e^F$ is called a
		$\tau$-function.
		
		\begin{example}\label{Ex:T1Tau}
			Equations of the hierarchy do not contain first-order derivatives,
			so the function $F(t) = t_1$ is a solution of the KP hierarchy;
			$\tau = e^{t_1}$ is a $\tau$-function.
		\end{example}
		
		\begin{proposition}\label{Pp:Sato}
			There exists a Lie algebra $\mathcal G$ of differential operators on
			the space $\Complex[[p]]$ having the following properties:
			\begin{enumerate}
				\item\label{It:GLInf} $\mathcal G$ is isomorphic to a
				one-dimensional central extension $\widehat{\name{gl}(\infty)}$ of
				a Lie algebra $\name{gl}(\infty)$ of infinite matrices
				$(x_{ij})_{i,j \in \Integer}$ having nonzero elements at a finite
				set of diagonals.
				
				\item\label{It:F2F3} The Euler field $E$ and the cut-and-join
				operator $\CJ$ belong to $\mathcal G$.
				
				\item\label{It:Orbit} For any $\tau$-function $f$ and any $X \in
				\mathcal G$, the power series $e^X f$ is a $\tau$-function,
				too. 
			\end{enumerate}
		\end{proposition}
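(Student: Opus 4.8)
The plan is to realize $\mathcal G$ as the bosonic image of the standard fermionic representation of the central extension, verify the three assertions in turn, and lean on Sato theory for the $\tau$-function part. Recall that $\name{gl}(\infty)$ acts on the single-particle space $\bigoplus_{u \in \HInteger} \Complex z_u$ by the matrix units $E_{uv}$, and extends to $\Fock$ by the Leibniz rule on the wedge monomials $\bigwedge_{u \in \Phi_\lambda} z_u$. The diagonal part of this action involves divergent sums $\sum_u c_u$ over the infinite set $\Phi_\lambda$; regularizing by normal ordering is exactly what obstructs linearity and produces the one-dimensional cocycle, so the corrected action is a genuine representation $r$ of $\widehat{\name{gl}(\infty)}$ on $\Fock$. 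I would then \emph{define} $\mathcal G \bydef \{\, R\, r(X)\, R^{-1} : X \in \widehat{\name{gl}(\infty)} \,\}$, where $R: \Fock \to \Complex[p]$ is the boson-fermion correspondence of Theorem \ref{Th:Fock}(\ref{It:BFC}).

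With this definition, assertion \ref{It:GLInf} is almost formal. Conjugation by the linear isomorphism $R$ is an isomorphism of the Lie algebra of operators on $\Fock$ onto that on $\Complex[p]$, so $\mathcal G \cong r(\widehat{\name{gl}(\infty)})$; since the wedge representation $r$ is faithful (this is part of the package recorded in \cite{FSpace}), $\mathcal G \cong \widehat{\name{gl}(\infty)}$ as Lie algebras. That the elements of $\mathcal G$ are \emph{differential} operators on $\Complex[[p]]$ comes from explicit bosonization: summing the matrix units $E_{uv}$ along a fixed diagonal $u - v = k$ yields the oscillator $a_k$, which by Proposition \ref{Pp:OperHat}(\ref{It:PdP}) is multiplication by $p_k$ for $k>0$ and $-k\,\pder{}{p_{-k}}$ for $k<0$, while a general finite-band element is a finite combination of normal-ordered quadratic expressions in the $a_k$, hence a polynomial-coefficient differential operator. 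This is classical and I would cite \cite{FSpace} for the details.

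For assertion \ref{It:F2F3} I would argue that $E$ and $\CJ$ are images of Cartan (diagonal) elements of $\widehat{\name{gl}(\infty)}$. The eigenvalue statement following Proposition \ref{Pp:OperHat} says that each $w_\lambda$ is an eigenvector of the energy operator and of the operator of Proposition \ref{Pp:OperHat}(\ref{It:CJ}), which $R$ carries to $E$ and $\CJ$ with eigenvalues $\lmod\lambda\rmod$ and $\phi(\lambda) = \sum_i \lambda_i(\lambda_i - 2i+1)$; thus both operators are diagonal in the fermion basis $\{w_\lambda\}$. It then suffices to check that these eigenvalues are \emph{additive statistics} of the Fock set: a short computation from $\Phi_\lambda = \{i - \tfrac12 - \lambda_i\}$ gives the finite, regularized identities $\lmod\lambda\rmod = \sum_{u \in \Phi_\lambda}(-u) - \sum_{u \in \Phi_\emptyset}(-u)$ and $\phi(\lambda) = \sum_{u \in \Phi_\lambda} u^2 - \sum_{u \in \Phi_\emptyset} u^2$. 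Consequently the fermionic preimages of $E$ and $\CJ$ are the regularized diagonal operators $r\bigl(\sum_u (-u) E_{uu}\bigr)$ and $r\bigl(\sum_u u^2 E_{uu}\bigr)$, both in $r(\widehat{\name{gl}(\infty)})$, whence $E, \CJ \in \mathcal G$. I regard this as the genuinely instructive step: $\CJ$ is a priori cubic in the oscillators $a_k$, and the point is precisely that bosonization collapses it onto a single diagonal, landing it inside the quadratic algebra $\widehat{\name{gl}(\infty)}$.

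Assertion \ref{It:Orbit} is where the real depth sits, and here I would invoke the Sato--Kac--Peterson description rather than reprove it. A series $\tau = e^F$ is a KP $\tau$-function if and only if its fermionic preimage $R^{-1}(\tau) \in \Fock$ lies in the orbit of the vacuum $w_\emptyset$ under the group $\widehat{\name{GL}}(\infty)$ integrating $r(\widehat{\name{gl}(\infty)})$ --- equivalently, satisfies the Hirota bilinear relations. Granting this, for $X \in \mathcal G$ one has $e^X = R\,\exp\!\bigl(r(X)\bigr)\,R^{-1}$ with $\exp\!\bigl(r(X)\bigr) \in \widehat{\name{GL}}(\infty)$, so $e^X$ sends the vacuum orbit into itself; applying it to a $\tau$-function $f$ keeps $R^{-1}(f)$ in the orbit, whence $e^X f$ is again a $\tau$-function. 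The main obstacle is thus not the bookkeeping above but this characterization of $\tau$-functions as the $\widehat{\name{gl}(\infty)}$-orbit of the vacuum, which is the substantive theorem of Sato theory and which I would import from \cite{FSpace}.
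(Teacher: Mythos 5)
The paper offers no proof of Proposition \ref{Pp:Sato} at all: it is imported from the literature, with \cite{Kramer} cited for the definition of $\mathcal G$ and \cite{2Sato}, \cite[section 6]{KazarianHodge} for the proof. Your sketch reconstructs precisely the construction those sources use --- the normal-ordered action of $\widehat{\name{gl}(\infty)}$ on the charge-zero Fock space, transported to $\Complex[p]$ by the boson-fermion correspondence $R$, with assertion \ref{It:Orbit} reduced to Sato's characterization of $\tau$-functions as the orbit of the vacuum (equivalently, the Hirota bilinear relations) --- so in approach you agree with the paper's intended source. Your treatment of assertion \ref{It:F2F3} is correct and is the genuinely nontrivial bookkeeping: since $E$ and $\CJ$ are diagonal on the basis $\{w_\lambda\}$, the regularized identities $\lmod\lambda\rmod = \sum_{u \in \Phi_\lambda}(-u) - \sum_{u \in \Phi_\emptyset}(-u)$ and $\phi(\lambda) = \sum_{u \in \Phi_\lambda}u^2 - \sum_{u \in \Phi_\emptyset}u^2$ (both easily checked from $\Phi_\lambda = \{i - \tfrac12 - \lambda_i\}$) exhibit them as images of regularized diagonal matrices, hence elements of $\mathcal G$.

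There is, however, one concrete error: the charge-zero wedge representation is \emph{not} faithful, so your proof of assertion \ref{It:GLInf} fails as written. The identity matrix $I = \sum_u E_{uu}$ has its nonzero entries on a single diagonal, hence lies in $\name{gl}(\infty)$ exactly as defined in the statement; its normal-ordered action is the charge operator, which vanishes identically on $\Fock$. Consequently $R\,r(\widehat{\name{gl}(\infty)})\,R^{-1}$ is isomorphic to $\widehat{\name{gl}(\infty)}/\Complex I$ (the quotient makes sense because $I$ is central and the standard cocycle pairs trivially with it), not to $\widehat{\name{gl}(\infty)}$ itself: the quotient has a one-dimensional center, spanned by the class of the central element, while the center of $\widehat{\name{gl}(\infty)}$ is the two-dimensional space spanned by $I$ and the central element, so the two algebras are not isomorphic. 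This is harmless for the paper's purposes --- Theorem \ref{Th:HBsolKP} uses only assertions \ref{It:F2F3} and \ref{It:Orbit} --- but it means assertion \ref{It:GLInf} must either be read modulo this central line or be established with a more careful choice of $\mathcal G$ than the plain image of the representation. A second, smaller slip: you claim a general finite-band matrix bosonizes to a finite combination of quadratic expressions in the $a_k$, which contradicts your own (correct) observation that the diagonal element corresponding to $\CJ$ bosonizes to something cubic in the $a_k$. Elements of $\name{gl}(\infty)$ are quadratic in the \emph{fermions}, not in the bosonic oscillators; their bosonic avatars are in general differential operators of higher order, which is fine for the statement but not for the reason you give.
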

		
		See \cite{Kramer} for the definition of $\mathcal G$, \cite{2Sato} and
		\protect{\cite[section 6]{KazarianHodge}} for the proof.
		
		\begin{theorem}\label{Th:HBsolKP}
			The generating function $\mathcal{H}^B(\beta,\gamma,u+v,u-v)$ is a
			$2$-parameter family of $\tau$-functions, independently in the $u$
			and the $v$ variables.
		\end{theorem}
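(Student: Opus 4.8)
The plan is to separate variables so that the generating function factors as a product of a $\tau$-function in the $u$-times and a $\tau$-function in the $v$-times, and then to invoke the Sato machinery of Proposition \ref{Pp:Sato}. First I would start from Corollary \ref{Cr:GenFun}, which gives $\mathcal{H}^B(\beta,\gamma,p,q) = e^{\beta\CJ_1+\gamma\CJ_2}e^{p_1}$, and substitute $p_\ell = u_\ell+v_\ell$, $q_\ell = u_\ell-v_\ell$ (the inverse of the change of variables in Proposition \ref{Pp:ChVar}). Using that same proposition to write $\CJ_1 = \CJ_u+\CJ_v$ and $\CJ_2 = E_u-E_v$, the exponent regroups into operators acting on disjoint sets of variables:
\[
\beta\CJ_1+\gamma\CJ_2 = (\beta\CJ_u+\gamma E_u) + (\beta\CJ_v-\gamma E_v).
\]
Since the two bracketed operators act on the disjoint variable sets $u$ and $v$, they commute, so the exponential factors as $e^{\beta\CJ_u+\gamma E_u}\,e^{\beta\CJ_v-\gamma E_v}$. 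Combining this with $e^{p_1}=e^{u_1}e^{v_1}$ and pushing the $v$-operator past the $u$-function yields the clean factorization
\[
\mathcal{H}^B(\beta,\gamma,u+v,u-v) = F_u\cdot F_v, \qquad F_u := e^{\beta\CJ_u+\gamma E_u}e^{u_1},\quad F_v := e^{\beta\CJ_v-\gamma E_v}e^{v_1},
\]
where $F_u$ depends only on $u$ (and $\beta,\gamma$) and $F_v$ only on $v$.

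Next I would bring in the integrability input. By Example \ref{Ex:T1Tau}, $e^{u_1}$ is a $\tau$-function with $u$ as KP times, and likewise $e^{v_1}$ with $v$ as times. By assertion \ref{It:F2F3} of Proposition \ref{Pp:Sato} both $E$ and $\CJ$ lie in the Lie algebra $\mathcal G$; since $\mathcal G$ is in particular a vector space, for every fixed value of the parameters the operators $\beta\CJ_u+\gamma E_u$ and $\beta\CJ_v-\gamma E_v$ lie in $\mathcal G$ (acting on the respective variable groups). Applying assertion \ref{It:Orbit} of that proposition, $F_u$ is a $\tau$-function in $u$ and $F_v$ is a $\tau$-function in $v$, for every value of $\beta$ and $\gamma$. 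This already exhibits a $2$-parameter family.

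Finally I would address the word \emph{independently}. Fixing $v$ (and $\beta,\gamma$), the function $\mathcal{H}^B = F_v\cdot F_u$ equals $F_u$ up to the multiplicative constant $F_v(v)$. Writing $\mathcal{H}^B = e^{G}$ with $G = \log F_u+\log F_v$, the term $\log F_v$ is constant in $u$; because every equation of the KP hierarchy (as the two displayed sample equations illustrate) contains only second- and higher-order derivatives of the solution, adding a constant to a KP solution yields another KP solution. Hence $G$ solves the hierarchy in the $u$-times and $\mathcal{H}^B$ is a $\tau$-function in $u$; the symmetric argument with $u$ fixed gives a $\tau$-function in $v$. This establishes the claim, with $\beta,\gamma$ as the two parameters. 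I expect the only point requiring genuine care to be this last step---verifying that evaluating the inert factor at fixed $v$ (resp.\ $u$) leaves a bona fide $\tau$-function, which rests precisely on the absence of zeroth- and first-order derivative terms in the hierarchy. The commutation and factorization of the exponential are routine, since the two operators involve disjoint variables.
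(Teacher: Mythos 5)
Your proposal is correct and follows essentially the same route as the paper: factor $\mathcal{H}^B(\beta,\gamma,u+v,u-v) = e^{(\beta\CJ_u+\gamma E_u)}e^{u_1}\,e^{(\beta\CJ_v-\gamma E_v)}e^{v_1}$ via Corollary \ref{Cr:GenFun} and Proposition \ref{Pp:ChVar}, then apply Example \ref{Ex:T1Tau} together with assertions \ref{It:F2F3} and \ref{It:Orbit} of Proposition \ref{Pp:Sato} to each factor. Your final paragraph, justifying that the factor inert in the other group of variables (a constant shift of $\log\tau$) does not spoil the KP equations, makes explicit a point the paper's proof passes over in silence, but it is the same argument in substance.
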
 
		
		\begin{proof}
			Equation \eqref{Eq:GenFunc} and Proposition \ref{Pp:ChVar} imply that
			\begin{align*}
				\mathcal{H}^B(\beta,\gamma,u+v,u-v) = e^{\beta(\CJ_u+
					\CJ_v)}e^{\gamma(E_u-E_v)} e^{u_1}e^{v_1} =
				e^{(\beta\CJ_{u}+\gamma E_u)}e^{u_1} e^{(\beta\CJ_{v}-\gamma
					E_v)}e^{v_1}
			\end{align*}
			(operators $\CJ_i$ and $\CJ_v$ commute because they act on different
			sets of variables).
			
			By assertion \ref{It:F2F3} of Proposition \ref{Pp:Sato},
			$\beta\CJ_{u}+ \gamma E_u \in \mathcal G_u$ (the Lie algebra $\mathcal
			G$ acting by differential operators on the $u$ variables). By Example
			\ref{Ex:T1Tau}, $e^{u_1}$ is a $\tau$-function (in the $u$ variables),
			and so is $\mathcal{H}^B(\beta,\gamma,u+v,u-v)$ by assertion
			\ref{It:Orbit} of the same proposition. The reasoning for the $v$
			variables is the same.
		\end{proof}
		
		\begin{corollary}
			The generating function $\mathcal{H}^D(\beta,u+v,u-v)$ is a 1-parameter
			family of $\tau$-functions independently in the $u$ and the $v$
			variables.
		\end{corollary}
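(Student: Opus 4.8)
The plan is to mirror the proof of Theorem \ref{Th:HBsolKP} almost verbatim, using the relationship between D-Hurwitz numbers and B-Hurwitz numbers established in Definition \ref{Df:D-Hurwitz} and explored via Theorem \ref{Th:CJD}. Recall from Definition \ref{Df:D-Hurwitz} that the D-Hurwitz numbers are exactly the B-Hurwitz numbers with $\ell = 0$, restricted to the case where $\#\mu$ is even. Consequently the generating function $\mathcal{H}^D$ should be obtainable from $\mathcal{H}^B$ by specializing the $\gamma$-parameter appropriately (setting $\gamma = 0$, or extracting the $\gamma$-independent part) and by restricting to the subspace $\Complex[p,q]^e$ of polynomials of even degree in $q$ — precisely the space on which, by Theorem \ref{Th:CJD} and its corollary, the D-cut-and-join operator $\Theta_D$ is realized as $\CJ_1$ acting on $V_n^+$.

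First I would write down $\mathcal{H}^D(\beta,p,q)$ explicitly and identify it, via the corollary to Theorem \ref{Th:CJD}, with $e^{\beta\CJ_1}$ applied to the initial condition $e^{p_1}$, but now with the understanding that we live on the even-in-$q$ subspace. The key point is that the single cut-and-join operator relevant here is $\CJ_1$ alone (there is no $\CJ_2$/$\gamma$-direction for $D_n$, since D-reflections are all of the $r_{ij}$ type), which by Proposition \ref{Pp:ChVar} splits as $\CJ_1 = \CJ_u + \CJ_v$ in the variables $u_\ell = (p_\ell+q_\ell)/2$ and $v_\ell = (p_\ell-q_\ell)/2$. Performing the substitution $p \mapsto u+v$, $q \mapsto u-v$ as in Theorem \ref{Th:HBsolKP}, I would obtain
\begin{equation*}
\mathcal{H}^D(\beta,u+v,u-v) = e^{\beta(\CJ_u + \CJ_v)}e^{u_1}e^{v_1} = \bigl(e^{\beta\CJ_u}e^{u_1}\bigr)\bigl(e^{\beta\CJ_v}e^{v_1}\bigr),
\end{equation*}
the factorization being legitimate since $\CJ_u$ and $\CJ_v$ act on disjoint variable sets and hence commute.

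Next I would invoke Proposition \ref{Pp:Sato}: by assertion \ref{It:F2F3}, $\beta\CJ_u \in \mathcal{G}_u$, and by Example \ref{Ex:T1Tau}, $e^{u_1}$ is a $\tau$-function in the $u$-variables; assertion \ref{It:Orbit} then gives that $e^{\beta\CJ_u}e^{u_1}$ is a $\tau$-function in $u$, depending on the single parameter $\beta$. The identical reasoning in the $v$-variables establishes independence there. This yields the claimed $1$-parameter family of $\tau$-functions, independently in $u$ and $v$ — with the single parameter $\beta$ replacing the two parameters $(\beta,\gamma)$ of the B-case, exactly because the Euler-field term $\gamma(E_u - E_v)$ present in Theorem \ref{Th:HBsolKP} is absent for $D_n$.

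The main obstacle I anticipate is not the integrable-systems machinery, which transfers directly, but rather justifying the passage from $\mathcal{H}^B$ to $\mathcal{H}^D$ cleanly at the level of generating functions — specifically, confirming that restricting to the even-in-$q$ subspace (as dictated by the parity condition $\#\mu$ even in Definition \ref{Df:D-Hurwitz}) does not interfere with the factorization and with membership in $\mathcal{G}_u$ and $\mathcal{G}_v$. I would handle this by noting that the restriction to $\Complex[p,q]^e$ is compatible with the $u,v$ splitting and that the initial condition $e^{u_1}e^{v_1}$ and the flow $e^{\beta\CJ_u}e^{\beta\CJ_v}$ both respect the relevant structure, so that the $\tau$-function property, being a statement purely in each of the variable groups $u$ and $v$ separately, is preserved under the specialization. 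The remainder is then a direct transcription of the proof of Theorem \ref{Th:HBsolKP}.
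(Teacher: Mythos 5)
Your proof is correct and takes essentially the same route as the paper: the paper's own (one-line) proof likewise writes $\mathcal{H}^D(\beta,u+v,u-v)=e^{\beta(\CJ_u+\CJ_v)}e^{u_1}e^{v_1}$ and repeats the strategy of Theorem \ref{Th:HBsolKP}, i.e.\ factoring into $\bigl(e^{\beta\CJ_u}e^{u_1}\bigr)\bigl(e^{\beta\CJ_v}e^{v_1}\bigr)$ and invoking Proposition \ref{Pp:Sato} together with Example \ref{Ex:T1Tau} in each set of variables. The only superfluous part of your argument is the worry about restricting to the even-in-$q$ subspace: it is automatic, since a product of reflections $r_{ij}$ is an even permutation and hence lies in $D_n$, so $h^B_{m,0,\lambda,\mu}=0$ whenever $\#\mu$ is odd and $\mathcal{H}^D(\beta,p,q)$ is literally $\mathcal{H}^B(\beta,0,p,q)$ with no further restriction needed.
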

		\begin{proof}
			We apply the same strategy as in the proof of theorem
			\ref{Th:HBsolKP} to the function $\mathcal{H}^D(\beta,u+v,u-v) =
			e^{\beta(\CJ_u+ \CJ_v)}e^{u_1}e^{v_1}$.
		\end{proof}


\begin{thebibliography}{9}
			\bibitem{TwistedHurwitz} Yu.\,Burman, R.\,Fesler. Ribbon decomposition
			and twisted Hurwitz numbers, arXiv 2107.13861 [math.CO], 2021.
			
			\bibitem{BurmanZvonkine} Yu.\,Burman, D.\,Zvonkine. Cycle
			factorization and $1$-faced graph embeddings, {\em European Journal
				of Combinatorics}, Vol.~31 (2010), no.~1 , pp.~129--144.
			
			\bibitem{Carter} R.\,W.\,Carter. Conjugacy classes in the Weyl group,
			{\em Compositio Mathematica}, tome 25 (1972), no.~1 , pp.~1--59.
			
			\bibitem{ELSV} T.\,Ekedahl, S.\,Lando, M.\,Shapiro, A.\,Vainshtein.
			Hurwitz numbers and intersections on moduli spaces of curves, {\em
				Inv.\ Math.}, 146 (2001), pp.~297-–327.
			
			\bibitem{GouldJacks}I. P.\,Goulden and D.\,M.\,Jackson. Transitive
			factorizations into transpositions and holomorphic mappings on the
			sphere, {\em Proc.\ Amer.\ Math.\ Soc.} 125 (1997), pp.~51--60.
			
			\bibitem{Humphreys} J.\,E.\,Humphreys. {\em Reflection groups and
				Coxeter groups.} Cambridge University Press, First edition, 1990.
			
			\bibitem{AdHurwitz} A.\,Hurwitz. Über algebraische Gebilde mit
			eindeutigen Transformationen in sich, {\em Math.\ Ann.} 41.3 (1892),
			pp.~403-–442.
			
			\bibitem{Johnson} P.\,Johnson. Double Hurwitz numbers via the infinite
			wedge, {\em TAMS}, 367 (2015), no.~9 , pp.~6415--6440.
			
			\bibitem{KazarianHodge} M.\,Kazarian. KP hierarchy for Hodge
			integrals, {\em Adv.\ Math.}, 221.1 (2009), pp.~1–-21.
			
			\bibitem{LandoKazarian} M.\,Kazarian and S.\,Lando. An
			algebro-geometric proof of Witten’s conjecture. {\em
				J.\ Amer.\ Math.\ Soc.} 20 (March 2007), pp.~1079--1089
			
			\bibitem{Kramer} R.\,Kramer. KP hierarchy for Hurwitz-type
			cohomological field theories, Max-Planck-Institut f\"ur Mathematik
			Preprint Series 2021 (42a), 9.10.2021.
			
			\bibitem{Macdonald} I.\,G.\,Macdonald. {\em Symmetric functions and
				Hall polynomials}. Oxford Mathematical Monographs, The Clarendon
			Press, Oxford University Press, Second edition, 1995. With
			contributions by A.~Zelevinsky, Oxford Science Publications.
			
			\bibitem{FSpace} T.\,Miwa, M.\,Jimbo, and E.\,Date. {\em Solitons:
				Differential equations, symmetries and infinite dimensional
				algebras.} Cambridge University Press, 2000, in: Cambridge Tracts
			in Mathematics, Vol.~135.
			
			\bibitem{Okounkov} A.\,Okounkov. Toda equations for Hurwitz numbers,
			{\em Math.\ Res.\ Letters} 7 (2000), pp.~447--453. 
			
			\bibitem{2Sato} M.\,Sato and Y.\,Sato. Soliton equations as dynamical
			systems on infinite-dimensional Grassmann manifold, in:
			North-Holland Math.\ Stud.\ U.S.-Japan seminar on nonlinear partial
			differential equations in applied science (Tokyo, July
			1982). Vol.~81. Lecture Notes Numer.\ Appl.\ Anal.\ 5.\ 1983,
			pp.~259-–271.
			
		\end{thebibliography}
	\end{document}